\documentclass[11pt]{amsart}

\usepackage{amscd,amssymb,amsopn,amsmath,amsthm,mathrsfs,graphics,amsfonts,enumerate,verbatim,calc
}
\usepackage[dvips]{graphicx}
\usepackage{url}

\usepackage[OT2,OT1]{fontenc}
\newcommand\cyr{%
\renewcommand\rmdefault{wncyr}%
\renewcommand\sfdefault{wncyss}%
\renewcommand\encodingdefault{OT2}%
\normalfont
\selectfont}
\DeclareTextFontCommand{\textcyr}{\cyr} 

\usepackage{mathtools} %\coloneqq etc
\usepackage{colonequals} %\colonequals

\usepackage[utf8]{inputenc}
\usepackage[T1]{fontenc}
\usepackage{lmodern}
\usepackage[english]{babel}
\usepackage[autostyle]{csquotes}

\usepackage{tikz-cd}

\usepackage{xcolor}
\definecolor{darkgreen}{rgb}{0.0, 0.6, 0.0}
\usepackage[bookmarks=true,draft=false,breaklinks,colorlinks,citecolor=darkgreen,linkcolor=red]{hyperref} %白黒印刷するときはコメントアウト
\RequirePackage{cleveref} % It is convenient to use \Cref instead of \ref. Using this, we need not write Lemma, Theorem, Corollary, ... before \ref.

\usepackage{amssymb,amsmath}

\DeclareFontFamily{OT1}{rsfs}{}
\DeclareFontShape{OT1}{rsfs}{n}{it}{<-> rsfs10}{}
\DeclareMathAlphabet{\mathscr}{OT1}{rsfs}{n}{it}

\numberwithin{equation}{section}
\newtheorem{theorem}{Theorem}[section]
 \crefname{theorem}{Theorem}{Theorems}
\newtheorem{lemma}[theorem]{Lemma}
 \crefname{lemma}{Lemma}{Lemmas}
\newtheorem{proposition}[theorem]{Proposition}
 \crefname{proposition}{Proposition}{Propositions}
\newtheorem{proposition-definition}[theorem]{Proposition-Definition}
 \crefname{proposition-definition}{Proposition-Definition}{Proposition-Definition}
\newtheorem{corollary}[theorem]{Corollary}
 \crefname{corollary}{Corollary}{Corollaries}
\newtheorem{corollary-definition}[theorem]{Corollary-Definition}
 \crefname{corollary-definition}{Corollary-Definition}{Corollary-Definition}
\newtheorem{claim}[theorem]{Claim}

 \crefname{maintheorema}{Main Theorem A}{Main Theorem A}

\theoremstyle{definition}
\newtheorem{definition}[theorem]{Definition}
 \crefname{definition}{Definition}{Definitions}
\newtheorem{remark}[theorem]{Remark}
 \crefname{remark}{Remark}{Remarks}
\newtheorem{example}[theorem]{Example}

 \crefname{observation}{Observation}{Observations}
\newtheorem{notation}[theorem]{Notation}
 \crefname{notation}{Notation}{Notations}

\theoremstyle{remark}

\newtheorem{acknowledgement}{Acknowledgement}

\newenvironment{pfclaim}[1][$\Diamond$]
{\def\claimQED{{#1}}\noindent {\em Proof of the claim. }}
{\leavevmode\unskip\penalty9999 \hbox{}\nobreak\hfill
    \quad\hbox{\claimQED}{\smallskip}}

\newcommand{\rad}{\operatorname{rad}}

\newcommand{\Tor}{\operatorname{Tor}}

\newcommand{\Frac}{\operatorname{Frac}}

\newcommand{\pr}{\operatorname{pr}}

\newcommand{\mult}{\operatorname{mult}}

\newcommand{\Frob}{\operatorname{Frob}}

\usepackage{enumitem}
\newlist{enumalph}{enumerate}{1}
\setlist*[enumalph]{label={\upshape(\alph*)}, nosep}

\newlist{enumroman}{enumerate}{1}
\setlist*[enumroman]{label={\upshape(\roman*)}, nosep}

\DeclareMathOperator{\Ker}{Ker}
\DeclareMathOperator{\Cok}{Cok}
\renewcommand{\Im}{\operatorname{Im}}

\newcommand{\ol}{\overline}

\newcommand{\wh}{\widehat}

\newcommand{\xr}{\xrightarrow}
\newcommand{\N}{\mathbb N}
\newcommand{\Q}{\mathbb Q}
\newcommand{\F}{\mathbb F}

\begin{document}
\title[An application of Fontaine's monoidal maps to perfectoid towers]
{An application of Fontaine's monoidal maps to perfectoid towers}

\author[K. Hayashi]{Kazuki Hayashi}
\address{Department of Mathematics, Institute of Science Tokyo, 2-12-1 Ookayama, Meguro, Tokyo 152-8551, Japan}
\email{hazuki0694@gmail.com}

\author[S. Ishiro]{Shinnosuke Ishiro}
\address{National Institute of Technology, Gunma College, 580 Toriba-machi, Maebashi-shi, Gunma 371-8530, Japan}
\email{shinnosukeishiro@gmail.com}

\author[K. Shimomoto]{Kazuma Shimomoto}
\address{Department of Mathematics, Institute of Science Tokyo, 2-12-1 Ookayama, Meguro, Tokyo 152-8551, Japan}
\email{shimomotokazuma@gmail.com}

%\thanks{2000 {\em Mathematics Subject Classification\/}:}

%\keywords{Math}
%%%% 謝辞、キーワード、MSCコード  
%\thanks{本研究は科研費(課題番号:99999999)の助成を受けたものである。}
\subjclass[2020]{13B22, 14G45}
\keywords{complete integral closure, monoidal map, perfectoid tower, almost purity}

%\subjclass{13}
%\subjclass[2000]{Primary 13-XX}
%\subjclass[2000]{Primary ; Secondary}
%\date{\today \, (\printtime)}
%\date{\today}

\begin{abstract} 
%In his foundational work of $p$-adic Hodge theory, J.-M.\ Fontaine introduced an important map, called Fontaine's monoidal map to connect arithmetic objects in mixed characteristic with arithmetic objects in positive characteristic.
To connect arithmetic and ring-theoretic properties of rings of mixed characteristic with those of positive characteristic, we introduce monoidal maps for perfectoid towers.
Using these maps, we discuss the almost integrality of perfectoid towers and of their tilts.
We also show that the towers constructed by F. Andreatta via ramification theory become perfectoid towers, and we apply the monoidal maps to deduce the normality of their small tilts.
\end{abstract}

\maketitle
\setcounter{tocdepth}{2}
\tableofcontents

\section{Introduction}

We start the introduction by recalling Fontaine's monoidal map, which has been widely used in $p$-adic Hodge theory and the foundations of perfectoid spaces. Let $p>0$ be a prime and assume that $A$ is a $p$-adically complete ring. Then there is a well-defined multiplicative map $\sharp:A^\flat \to A$, where
$$
A^{\flat}:=\varprojlim_{i \ge 0}\big\{\cdots \to A/pA\xrightarrow{\Frob} A/pA \xrightarrow{\Frob} \cdots \xrightarrow{\Frob} A/pA\big\},
$$
which is the \textit{tilt} of $A$. 
In \cite{EHS24}, the authors showed that some ring-theoretic properties can be transferred from $A$ to $A^\flat$. 
As there is no direct way of constructing a ring map between $A$ and $A^\flat$, it is essential to exploit the multiplicativity of $\sharp:A^\flat \to A$ to study the above questions. The drawback is that if $A$ is assumed to be a Noetherian ring, then $A^\flat$ is too small to be useful due to the fact that the Frobenius map on $A/pA$ is far from being surjective. Thus, the strategy used in \cite{EHS24} works effectively mostly for big rings such as perfectoid rings.

On the other hand, the second and third authors \cite{INS25} with Kei Nakazato introduced the notion of \emph{perfectoid towers}, which gives a kind of Noetherization of perfectoid theory. A perfectoid tower $\{R_i,t_i\}_{i\geq 0}$ is a sequence of ring homomorphism $R_0\xr{t_0}R_1\xr{t_1}\cdots$ and a principal ideal $I_0=(f_0)\subset R_0$ satisfying seven axioms (\cref{def:perfectoid tower}). Such a tower approximates a perfectoid ring: the $I_0$-adically completion $\wh{R_\infty}$ of the colimit $R_\infty=\varinjlim_{i\geq 0}R_i$ is a perfectoid ring (\cref{ntn:tower}). In particular, we can consider Fontaine's monoidal map $\sharp\colon (\wh{R_\infty})^\flat\to \wh{R_\infty}$.
Our aim of this paper is to exploit this map for perfectoid towers. 
Due to \cite{INS25}, there is the notion of the tilt $\{R_i^{s.\flat},t_i^{s.\flat}\}_{i\geq 0}$ of a perfectoid tower $\{R_i,t_i\}_{i\geq 0}$, and each layer $R_i^{s.\flat}$, called the $i$-th \emph{small tilt}, can be embedded in $(\wh{R_\infty})^\flat$. In this context, we construct a refined version $\sharp^{(i)}\colon R_i^{s.\flat}\to R_i+I_0\wh{R_\infty}$ of the map $\sharp\colon (\wh{R_\infty})^\flat\to \wh{R_\infty}$ (\cref{def:refined monoidal}). Although $\sharp^{(i)}$ is obtained by restricting $\sharp$, it is compatible with some known results (\cref{prop:sharpi2}).
Moreover, we apply this notion to show that the property being completely integrally closed (\cref{def:integral}) can be transferred from $\{R_i,t_i\}_{i\geq 0}$ to $\{R_i^{s.\flat},t_i^{s.\flat}\}_{i\geq 0}$.

%The main result of \cite{EHS24} (see \cref{thm:sharp} (3)) shows that if $\wh{R_\infty}$ is completely integrally closed in $\wh{R_\infty}[\frac{1}{f_0}]$, then $(\wh{R_\infty})^\flat$ is completely integrally closed in $(\wh{R_\infty})^\flat[\frac{1}{f_0^{s.\flat}}]$, where $f_0^{s.\flat}$ is an element corresponding to $f_0$ via $\sharp$.
%In this paper, we introduce a refined version of this map to deal with singularities of Noetherian rings. Namely, we construct a map analogous to $\sharp:A^\flat \to A$ for an \textit{asymptotic perfecrtoid tower}, which is defined in \cite{IS25}. Then we use this construction to prove some basic results. Note that the main results in this article have applications to lim Cohen-Macaulay sequences (see \cite{IS25} for details).

\begin{theorem}[{\cref{lem:citower,tiltingintegral}}]
\label{thm:A}
Let $\{R_i,t_i\}_{i\geq 0}$ be a perfectoid tower arising from some pair $(R,f_0)$ with $f_0\in R$. If $R$ is $f_0$-torsion free, then the following conditions are equivalent.
  \begin{enumerate}
  \item $R_i$ is completely integrally closed in $R_i[\frac{1}{f_0}]$ for every $i\geq 0$.
  \item $R_\infty$ is completely integrally closed in $R_\infty[\frac{1}{f_0}]$.
  \item $\wh{R_\infty}$ is completely integrally closed in $\wh{R_\infty}[\frac{1}{f_0}]$.
  \end{enumerate}
Moreover, if $\{R_i,t_i\}_{i\geq 0}$ satisfies these conditions, then so does the tilt $\{R_i^{s.\flat},t_i^{s.\flat}\}_{i\geq 0}$.
\end{theorem}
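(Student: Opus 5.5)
The plan is to prove the cycle of implications (1)$\Rightarrow$(2)$\Rightarrow$(3)$\Rightarrow$(1), and then transfer condition (3) to the tilt using Fontaine's monoidal map. Throughout, recall that $x\in R[\frac1{f_0}]$ is almost integral over $R$ if there is a fixed $c$ (which we may take to be a power $f_0^N$) with $c x^n\in R$ for all $n$. Also recall from the axioms of a perfectoid tower that the transition maps $t_i$ are injective and that $f_0$ is a nonzerodivisor on every layer. The latter uses the $f_0$-torsion freeness of $R$, which propagates up the tower through the axioms.

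For (1)$\Rightarrow$(2), take $x\in R_\infty[\frac1{f_0}]$ with $f_0^N x^n\in R_\infty$ for all $n$. Write $x=y/f_0^m$ with $y\in R_i$. For each $n$, the element $f_0^N x^n$ lies in some $R_{j(n)}$, and $j(n)$ may depend on $n$. To bring everything back to a single layer I would use the descent property $R_{j}\cap R_i[\frac1{f_0}]=R_i$ inside $R_j[\frac1{f_0}]$. The plan is to deduce this from the axiom that $R_i/f_0\to R_j/f_0$ is injective and $f_0$ is regular. Then $f_0^Nx^n\in R_i$ for all $n$, so $x\in R_i$ by (1).

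For (2)$\Rightarrow$(3), take $x\in \wh{R_\infty}[\frac1{f_0}]$ with $f_0^N x^n\in\wh{R_\infty}$. Write $x = a + f_0^{k}z$ with $a\in R_\infty[\frac1{f_0}]$ and $z\in\wh{R_\infty}$, where $k$ is large. The point is that $\wh{R_\infty}/f_0^k\cong R_\infty/f_0^k$ and $f_0$ is regular on both rings. Expanding $f_0^{N'}a^n$ binomially, with $N'$ enlarged to absorb the denominators, shows that $a$ is almost integral over $R_\infty$. Hence $a\in R_\infty$ by (2), and so $x\in\wh{R_\infty}$.

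For (3)$\Rightarrow$(1), use $R_i\to\wh{R_\infty}$ and the same descent, namely $\wh{R_\infty}\cap R_i[\frac1{f_0}]=R_i$. This again reduces to the injectivity of $R_i/f_0^k\to R_\infty/f_0^k$ together with the regularity of $f_0$. I expect this descent lemma to be the main technical obstacle. The whole chain hinges on the fact that $R_i/I_0\to R_\infty/I_0$ is injective, which should be one of the tower axioms.

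For the tilt, the small tilt tower is again a perfectoid tower, with $f_0^{s.\flat}$ regular, and its completed colimit is $(\wh{R_\infty})^\flat$. By the equivalence just proved, applied to the tilted tower, it suffices to show that $(\wh{R_\infty})^\flat$ is completely integrally closed in $(\wh{R_\infty})^\flat[\frac1{f_0^{s.\flat}}]$. I would invoke the \cite{EHS24} result that complete integral closedness of a perfectoid ring $A$ in $A[\frac1{f}]$ passes to $A^\flat$. The argument there goes as follows. Given $\xi$ with $(f^\flat)^N\xi^n\in A^\flat$, apply $\sharp$, which is multiplicative, to each $p$-power root component. Complete integral closedness of $A$ gives $(\xi^{1/p^e})^\sharp\in A$ for all $e$. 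This yields a compatible system in $\varprojlim A$, that is, an element of $A^\flat$. Alternatively, the refined maps $\sharp^{(i)}$ can be used to run the same argument layerwise.
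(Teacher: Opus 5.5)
\textbf{Equivalence of (1)--(3).} This part is correct. It is the paper's argument (\cref{lem:cartesian}, \cref{cor:BLlemma}, \cref{prop:ci cartesian3}) unwound by hand. One small correction: injectivity of the $t_i$ is not an axiom of a perfectoid tower; only injectivity modulo $I_0$ is. Your intersections $R_j\cap R_i[\frac{1}{f_0}]=R_i$ should therefore be read as the cartesian squares of \cref{lem:cartesian}, which is all your argument actually uses.

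\textbf{The gap is in the tilting step.} After reducing, via the tilted tower, to complete integral closedness of $A^\flat=(\wh{R_\infty})^\flat$ in $A^\flat[\frac{1}{f_0^{s.\flat}}]$, you invoke \cite{EHS24}. That result (\cref{thm:sharp}(3)) concerns $A^\flat[\frac{1}{\varpi^\flat}]$, where $p\in\varpi^pA$ (so in general one takes $\varpi=f_1$, not $f_0$) and $(\varpi^\flat)^\sharp$ is a unit multiple of $\varpi$. Nothing in your proposal relates $f_0^{s.\flat}$ to such an element, and $f_0^{s.\flat}$ is defined only as a generator of $I_0^{s.\flat}=\Ker(R_0^{s.\flat}\to R/I_0)$.

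From that definition alone you get $\sharp(f_0^{s.\flat})\in f_0\wh{R_\infty}$, because the $0$-th component vanishes. By \cref{thm:sharp}(1) this gives $f_0^{s.\flat}\in(\varpi^\flat)^pA^\flat$. So $A^\flat[\frac{1}{f_0^{s.\flat}}]$ is a localization of $A^\flat[\frac{1}{\varpi^\flat}]$, but a priori possibly a proper one. The reverse divisibility $(\varpi^\flat)^m\in f_0^{s.\flat}A^\flat$, which you need for the two rings to coincide, requires information about the higher components of $f_0^{s.\flat}$.

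The same issue is hidden in your sketch of the \cite{EHS24} argument. Applying $\sharp$ to $(f^\flat)^N\xi^n$ gives almost integrality over $A$ inside $A[\frac{1}{f_0}]$ only if $\sharp(f^\flat)$ is $f_0$ (or a $p$-power root of it) up to a unit.

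\textbf{How to fill it.} This identification is exactly \cref{lem:sharpf}, and it is the reason the paper introduces the refined maps $\sharp^{(j)}$. By \eqref{eq:sharpPhi} and \cite[Lemma 3.30]{INS25}, $\sharp^{(j)}(f_j^{s.\flat})$ generates $I_j(R_j+I_0\wh{R_\infty})$. Hence, in the $I_0$-torsion free case, $\sharp(f_j^{s.\flat})=f_ju_j$ with $u_j\in\wh{R_\infty}^\times$. You need to add this before citing \cite{EHS24} and applying \cref{lem:citower} to the tilted tower.
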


Since the notion of complete integral closedness coincides with that of integral closedness in the Noetherian case, the reader might expect that \cref{thm:A} can be applied to conclude the normality of small tilts $R_i^{s.\flat}$. As a second main theorem, we give such an example based on ramification theory. More precisely, we construct the following type of perfectoid towers and establish the normality of its small tilts.

\begin{theorem}[{\cref{thm:Sn perfectoid,cor:B}}]
\label{thm:B}
Let $R$ be an unramified complete regular local ring of mixed characteristic $(0,p)$ whose residue field $k$ is perfect. Let $R\hookrightarrow S$ be a finite extension of normal local domains such that $R[\frac{1}{p}]\hookrightarrow S[\frac{1}{p}]$ is \'{e}tale. Let $\{R_n\}_{n\geq 0}$ be the perfectoid tower associated to $R$, and let $S_n$ be the integral closure of $R_n$ in $(R_n\otimes_RS)[\frac{1}{p}]$ for every $n\geq 0$. Then there exist a rational $\varepsilon \in (0,1)\cap \Q$ and an integer $N\geq 0$ such that $\{S_n\}_{n\geq N}$ is a perfectoid tower arising from $(S_N,(p^\varepsilon))$. Moreover, the small tilt $S_n^{s.\flat}$ is normal for any $n\geq N$.
\end{theorem}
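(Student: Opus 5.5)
The proof splits into two parts: first that $\{S_n\}_{n\geq N}$ is a perfectoid tower, then that the small tilts are normal.

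\textbf{The tower.} I would use Faltings' almost purity in the ramification-theoretic form due to Andreatta. Write $R\cong W(k)[[x_2,\dots,x_d]]$. The tower $\{R_n\}$ is obtained by adjoining $p^{1/p^n}$ and $x_j^{1/p^n}$, and $\wh{R_\infty}$ is perfectoid. Since $R[\frac1p]\to S[\frac1p]$ is étale, the extension $\wh{R_\infty}\to \wh{S_\infty}$, with $S_\infty=\varinjlim S_n$, is almost finite étale. Andreatta's estimates then give something stronger than the almost statement: for some rational $\varepsilon$, the discriminant of $R_n\to S_n$ is controlled by $p^{\varepsilon/p^n}$-type bounds for $n\geq N$. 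In particular $S_n\otimes_{R_n}R_{n+1}\to S_{n+1}$ has cokernel killed by a small power of $p$.

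From these bounds I would check the axioms of \cref{def:perfectoid tower} for $I_0=(p^\varepsilon)\subset S_N$ one at a time:
\begin{itemize}
\item injectivity of the Frobenius-induced maps $S_n/p^{\varepsilon/p^{n-N}}\to S_{n+1}/p^{\varepsilon/p^{n+1-N}}$, which follows from integral closedness of $S_{n+1}$;
\item surjectivity of the Frobenius $S_{n+1}/p^\varepsilon\to S_n/p^\varepsilon$, which is where the discriminant bound is used, by choosing $\varepsilon$ smaller than the gap left by the ramification;
\item existence of compatible $p$-power roots $p^{\varepsilon/p^{n}}\in S_{N+n}$, which is clear;
\item $p$-torsion-freeness and $p$-adic (Noetherian) completeness of each $S_n$, which are also clear.
\end{itemize}

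\textbf{Normality of the small tilts.} Each $S_n$ is a normal Noetherian domain, hence completely integrally closed in $S_n[\frac1p]$. It is also $p$-torsion free. By \cref{thm:A}, the small tilts $S_n^{s.\flat}$ are completely integrally closed in $S_n^{s.\flat}[\frac{1}{f^{s.\flat}}]$.

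It remains to upgrade this to normality. I would show three things:
\begin{itemize}
\item $S_n^{s.\flat}$ is Noetherian. It is finite over $R_n^{s.\flat}$, which is a power series ring $k[[p^{\flat 1/p^n},x_j^{1/p^n}]]$ by the tilting correspondence for the regular tower.
\item $S_n^{s.\flat}$ is a domain. For this I would use the multiplicative map $\sharp^{(n)}$ and the fact that $\sharp$ of a product of nonzero elements is nonzero modulo small powers, or alternatively embed into $(\wh{S_\infty})^\flat$, which is a domain since $\wh{S_\infty}$ is a normal domain after tilting.
\item $S_n^{s.\flat}[1/f^{s.\flat}]$ is normal. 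This follows because it is étale over the regular ring $R_n^{s.\flat}[1/f^{s.\flat}]$, by the tilting equivalence of almost finite étale algebras.
\end{itemize}
A Noetherian domain that is completely integrally closed in a normal localization is normal, which gives the second assertion.

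\textbf{Main obstacle.} The hardest step is surjectivity of Frobenius modulo $p^\varepsilon$ for the $S_n$ at finite level, not just almost surjectivity. My plan is to extract an explicit uniform bound from Andreatta's ramification estimates so that the almost-zero defects are killed by $p^{\delta/p^n}$ with $\delta<\varepsilon$, and then to choose $N$ large enough.
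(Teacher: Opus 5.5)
Your normality argument has the same architecture as the paper's: complete integral closedness of $S_n^{s.\flat}$ from Theorem~A, finiteness over $R_n^{s.\flat}$, and \'etaleness of $S_n^{s.\flat}[\frac{1}{p^\flat}]$ over $R_n^{s.\flat}[\frac{1}{p^\flat}]$. The tower half, however, has a genuine gap precisely at the step you call the main obstacle.

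\textbf{Axiom (c) is never proved.} Before you can speak of a Frobenius map $S_{n+1}/p^\varepsilon S_{n+1}\to S_n/p^\varepsilon S_n$ you need $(S_{n+1})^p\subseteq S_n+p^\varepsilon S_{n+1}$. The paper gets this from Faltings' bound $p^{\delta_n}\Cok(R_{n+1}\otimes_{R_n}S_n\to S_{n+1})=0$ together with surjectivity of Frobenius in the regular tower, which gives $p^{\delta_np}(S_{n+1})^p\subseteq S_n+p^{1/p}S_{n+1}$. It then uses normality of $S_n$ to clear the denominator, and this is what fixes $\varepsilon=(1-\delta_Np^2)/p$.

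\textbf{Axiom (d) does not follow from your plan.} A bound on that cokernel cannot by itself produce $p$-th roots modulo $p^\varepsilon$ of elements of $S_n$. Moreover, knowing that the defect is killed by $p^{\delta/p^n}$ with $\delta<\varepsilon$ does not make it vanish: a quotient of $S_N/p^\varepsilon S_N$ can be killed by a small power of $p$ without being zero. So ``choose $N$ large'' does not finish the argument. The paper instead works with $\Phi^{(N)}\colon S_N^{s.\flat}\to S_N/p^\varepsilon S_N$ and proceeds as follows.
\begin{enumerate}
\item It takes \emph{exact} (not almost) surjectivity of $S_\infty^\flat\to S_\infty/p^\varepsilon S_\infty$ from Witt-perfect almost purity.
\item It uses Andreatta's theorem that $p^n\delta_n\le c$ to show that a bounded power of $p^\flat$ kills $\Cok(R_\infty^\flat\otimes_{R_N^{s.\flat}}S_N^{s.\flat}\to S_\infty^\flat)$.
\item It descends along the faithfully flat map $R_N^{s.\flat}\to R_\infty^\flat$, via the snake lemma, to get $p^{c/p^N}\Cok\Phi^{(N)}=0$.
\item It proves the key divisibility statement: $\Phi^{(N)}(x)\in p^{c/p^N}S_N/p^\varepsilon S_N$ forces $x\in(p^\flat)^cS_N^{s.\flat}$. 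This uses normality of every $S_m$ and a choice of $N$ with $(N+1)\varepsilon\ge c$, and it yields surjectivity modulo $p^{\varepsilon-c/p^N}$.
\item It upgrades this to surjectivity modulo $p^\varepsilon$ by the nilpotent-ideal trick.
\end{enumerate}
The descent in step 3 and especially the divisibility in step 4 are the missing ideas.

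\textbf{Minor points on the axioms.} Axiom (b) concerns the maps induced by the inclusions modulo the fixed ideal $(p^\varepsilon)$, not Frobenius-induced maps with varying moduli. It uses that $S_n$ is integrally closed in $S_n[\frac1p]$ and that $S_{n+1}$ is integral over $S_n$. You also omit (f-2), which follows from $p$-root closedness.

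\textbf{The domain claims are false.} $S_n$ and $S_n^{s.\flat}$ need not be domains. Take $R=\mathbb{Z}_p$ and $S$ the ring of integers of $L=\mathbb{Q}_p(\zeta_p,p^{1/p})$. Then
\[
(R_1\otimes_RS)[\tfrac1p]\cong L[T]/(T^p-p)\cong\prod_{i=0}^{p-1}L,
\]
so $S_n$ has nontrivial idempotents for every $n\geq 1$. Hence so does $S_n^{s.\flat}$, by the bijection $\mathrm{Idem}(S_n)\cong\mathrm{Idem}(S_n^{s.\flat})$ for the $p$-adically henselian ring $S_n$.

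The domain step should simply be dropped, since it is unnecessary. Suppose $A$ is Noetherian, $f\in A$ is a non-zero-divisor, $A$ is integrally closed in $A[\frac1f]$, and $A[\frac1f]$ is normal. Then $A$ is reduced and integrally closed in $Q(A)=Q(A[\frac1f])$, hence normal. This is exactly how the paper concludes.
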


Note that the construction and the proof of \cref{thm:B} are heavily based on the work of Andreatta \cite{An06}. Essentially, he had already constructed certain ramified towers as in $\{S_n\}_{n\geq 0}$ appearing in \cref{thm:B} and utilize it to develop generalized theory of field of norms. However, our work connects his study to the theory of perfectoid towers; moreover, our proof of the normality of small tilts $S_n^{s.\flat}$ is based on \cref{thm:A} and two fundamental theorems in perfectoid theory, (Witt-perfect) almost purity theorem and perfectoid-\'{e}tale correspondence.

%In \Cref{s:A}, we record a proof of some results on the flatness of completions of not necessarily Noetherian rings, which are needed in \Cref{s:4}.
\ \\
%\subsection*{Notations and Conventions}
\textbf{Notations and Conventions}
Throughout this paper, we follow the notation and the convention stated below.
  \begin{itemize}
  \item We consistently fix a prime number $p$.
  \item All rings are assumed to be commutative and unital (unless otherwise stated).
  \item A \emph{pair} is a couple $(A,I)$ consisting of a ring $A$ and an ideal $I$ of $A$. When the ideal $I$ is principal, say $I=(a)$, then we often write $(A,a)$ in place of $(A,I)$.
  \item For a pair $(A,I)$ and an $A$-module $M$, we say that an element $x\in M$ is \emph{$I$-torsion} if for all $a\in I$ there exists an integer $n>0$ such that $a^nx=0$. Let $M_{\textrm{$I$-}\mathrm{tor}}$ denote the $A$-submodule of $M$ consisting of all $I$-torsion elements in $M$. We say that $M$ is \emph{$I$-torsion free} if $M_{\textrm{$I$-}\mathrm{tor}}=0$.
  \item For a pair $(A,I)$, when we say an $A$-module $M$ is $I$-adically complete, we always mean that $M$ is Hausdorff complete with respect to the $I$-adic topology.
  \end{itemize}

\begin{acknowledgement}
The authors are deeply grateful to Kei Nakazato for his valuable comments, particularly his insights regarding the construction of monoidal maps on perfectoid towers.
The authors would also like to thank Ryo Ishizuka for his advice.
\end{acknowledgement}
%%%%%%%%%%%%%%%%%%%%%%%%%%%%%%%%%%%%%%%%%%%%%%%%%%%%%%%%%%%%%%%%%%%%%%%
%%%%%%%%%%%%%%%%%%%%%%%%%%%%%%%%%%%%%%%%%%%%%%%%%%%%%%%%%%%%%%%%%%%%%%%

%「ネーター性や正則性がtiltで遺伝することは\cite{INS25}でわかっている」ということにもどこかで触れたい。

%%%%%%%%%%%%%%%%%%%%%%%%%%%%%%%%%%%%%%%%%%%%%%%%%%%%%%%%%%%%%%%%%%%%%%%
%%%%%%%%%%%%%%%%%%%%%%%%%%%%%%%%%%%%%%%%%%%%%%%%%%%%%%%%%%%%%%%%%%%%%%%
\section{Almost integrality in cartesian diagrams}
\label{s:2}
In this section, we discuss behavior of almost integrality in cartesian diagrams of rings. The results in this section are essential of the proof of \cref{thm:A}.

\begin{definition}
\label{def:integral}
Let $A\subseteq B$ be a ring extension.
  \begin{enumerate}
  \item Fix a positive integer $n>0$. We say that $A$ is \emph{$n$-root closed in $B$} if every $b\in B$ with $b^n\in A$ lies in $A$.
  \item An element $b\in B$ is \emph{integral over $A$} if the $A$-subalgebra $A[b]$ of $B$ is a finitely generated $A$-module. We say that $A$ is \emph{integrally closed in $B$} if every $b\in B$ that is integral over $A$ lies in $A$.
  \item An element $b\in B$ is \emph{almost integral over $A$} if the $A$-subalgebra $A[b]$ of $B$ is contained in a finitely generated $A$-module. We say that $A$ is \emph{completely integrally closed in $B$} if every $b\in B$ that is almost integral over $A$ lies in $A$.
  \end{enumerate}
\end{definition}

By definition, if $A$ is completely integrally closed in $B$, then it is integrally closed and $n$-root closed in $B$. If $A$ is Noetherian, then it is completely integrally closed in $B$ if and only if it is integrally closed in $B$.

The properties above are preserved by pullback in the following sense.

\begin{lemma}
\label{lem:ci cartesian}
Consider a cartesian diagram of rings
\[
\begin{tikzcd}
A \rar["\varphi"] \dar[hookrightarrow] \ar[rd,phantom,"\mathrm{p.b.}"] & B \dar[hookrightarrow] \\
A' \rar["\varphi'"'] & B',
\end{tikzcd}
\]
where the vertical maps are injective. If $B$ is $n$-root closed (resp.\ integrally closed, completely integrally closed) in $B'$, then $A$ is $n$-root closed (resp.\ integrally closed, completely integrally closed) in $A'$.
\end{lemma}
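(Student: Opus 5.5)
Since the diagram is cartesian, we identify $A$ with the fiber product $A'\times_{B'}B=\{(a',b)\in A'\times B : \varphi'(a')=b\}$. Concretely, we use $A'\times_{B'} B$ in place of $A$, with $A\hookrightarrow A'$ being the first projection. The key consequence is the criterion
\[
x\in A' \text{ lies in } A \iff \varphi'(x)\in B,
\]
where $B$ is viewed inside $B'$. Injectivity of $A\to A'$ follows from injectivity of $B\to B'$ in any case. Each of the three properties then reduces to the corresponding statement in $B'$: given $x\in A'$ satisfying the hypothesis relative to $A$, we push it forward by $\varphi'$, show that $\varphi'(x)$ satisfies the hypothesis relative to $B$, conclude that $\varphi'(x)\in B$, and use the criterion to get $x\in A$.

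\textbf{$n$-root closedness.} Suppose $x\in A'$ with $x^n\in A$. Then $\varphi'(x)^n=\varphi'(x^n)=\varphi(x^n)\in B$. By hypothesis $\varphi'(x)\in B$, so $x\in A$.

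\textbf{Integral closedness.} Let $x\in A'$ be integral over $A$, i.e.\ $A[x]$ is a finitely generated $A$-module, say generated by $m_1,\dots,m_r$. Applying $\varphi'$, the subring $\varphi'(A[x])=\varphi(A)[\varphi'(x)]$ is spanned over $\varphi(A)$ by $\varphi'(m_1),\dots,\varphi'(m_r)$. Hence $B[\varphi'(x)]=B\cdot\varphi'(A[x])$ is generated as a $B$-module by $\varphi'(m_1),\dots,\varphi'(m_r)$. So $\varphi'(x)$ is integral over $B$, hence lies in $B$, and therefore $x\in A$.

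\textbf{Complete integral closedness.} Suppose $A[x]\subseteq M$ for a finitely generated $A$-submodule $M=\sum_j A m_j$ of $A'$. Then every power satisfies $\varphi'(x)^k\in\sum_j\varphi(A)\varphi'(m_j)$. Multiplying by elements of $B$ gives
\[
B[\varphi'(x)]\subseteq \sum_j B\,\varphi'(m_j),
\]
which is a finitely generated $B$-submodule of $B'$. So $\varphi'(x)$ is almost integral over $B$, hence lies in $B$, and again $x\in A$.

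The only point needing care is the identification of $A$ with $\{x\in A' : \varphi'(x)\in B\}$ via the pullback property. Everything else is formal, and no step poses a real obstacle.
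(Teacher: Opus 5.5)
Your proof is correct and follows the paper's argument. You identify $A$ with $\varphi'^{-1}(B)$ via the pullback, push a candidate element forward along $\varphi'$, apply the closedness of $B$ in $B'$, and pull the conclusion back; the only difference is that you spell out why (almost) integrality over $A$ gives (almost) integrality of $\varphi'(x)$ over $B$, which the paper states without comment.
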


\begin{proof}
Assume that $B$ is $n$-root closed in $B'$. Suppose $a'\in A'$ satisfies $a'^n \in A$. Then $\varphi(a'^n)\in B$. But $\varphi(a'^n) = \varphi'(a'^n) = \varphi'(a')^n$, and so $\varphi'(a')\in B$. Hence $a'\in \varphi'^{-1}(B)=A$. Thus $A$ is $n$-root closed in $A'$.

Next assume $B$ is integrally closed (resp.\ completely integrally closed) in $B'$. If $a'\in A'$ is integral (resp.\ almost integral) over $A$, then $\varphi'(a')\in B'$ is integral (resp.\ almost integral) over $B$, and so $\varphi'(a')\in B$. Hence $a'\in \varphi'^{-1}(B)=A$. Thus $A$ is integrally closed (resp.\ completely integrally closed) in $A'$.
\end{proof}

We need the following criterion of cartesian diagrams in the torsion free case.

\begin{lemma}
\label{lem:cartesian}
Let $R$ be a ring and $f\in R$. Let $\varphi\colon M\to N$ be a homomorphism of $f$-torsion free $R$-modules. Then the following conditions are equivalent.
  \begin{enumerate}
  \item The induced homomorphism $M/fM\to N/fN$ is injective.
  \item The commutative diagram of $R$-modules
  \[
  \begin{tikzcd}
  M \rar["\varphi"] \dar[hookrightarrow] & N \dar[hookrightarrow] \\
  M[\frac{1}{f}] \rar["\varphi_f"'] & N[\frac{1}{f}]
  \end{tikzcd}
  \]
  is cartesian.
  \end{enumerate}
\end{lemma}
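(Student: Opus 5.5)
We prove the two implications directly, working with elements. Since $M$ and $N$ are $f$-torsion free, the localization maps $M\to M[\frac{1}{f}]$ and $N\to N[\frac{1}{f}]$ are injective, so we may regard $M$ and $N$ as submodules. The square is cartesian precisely when every $x\in M[\frac{1}{f}]$ with $\varphi_f(x)\in N$ already lies in $M$. Any $x\in M[\frac{1}{f}]$ can be written as $x=m/f^k$ with $m\in M$ and $k\geq 0$. Both directions therefore reduce to comparing the conditions ``$\varphi(m)\in f^kN$'' and ``$m\in f^kM$''.

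For (1)$\Rightarrow$(2), we first show by induction on $k$ that $\varphi(m)\in f^kN$ implies $m\in f^kM$.
\begin{itemize}
\item The case $k=0$ is trivial.
\item Suppose $\varphi(m)=f^kn$ with $k\geq 1$. Then the image of $m$ in $N/fN$ vanishes, so by the injectivity in (1) we get $m=fm_1$ for some $m_1\in M$.
\item This gives $f\varphi(m_1)=f\cdot f^{k-1}n$. Since $N$ is $f$-torsion free, $\varphi(m_1)=f^{k-1}n$, and the inductive hypothesis yields $m_1\in f^{k-1}M$. Hence $m\in f^kM$.
\end{itemize}
Now let $x=m/f^k$ with $\varphi_f(x)=n\in N$. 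Then $\varphi(m)/f^k=n$ in $N[\frac{1}{f}]$, and torsion freeness of $N$ gives $\varphi(m)=f^kn$ in $N$. By the induction, $m=f^km'$ for some $m'\in M$, so $x=m'\in M$. Hence $M=\varphi_f^{-1}(N)$, and the square is cartesian.

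For (2)$\Rightarrow$(1), suppose $m\in M$ satisfies $\varphi(m)=fn$ for some $n\in N$. Set $x=m/f\in M[\frac{1}{f}]$. Then $\varphi_f(x)=n\in N$, so the cartesian property gives $x=m'\in M$. Torsion freeness of $M$ then gives $m=fm'\in fM$. Therefore the map $M/fM\to N/fN$ is injective.

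The only point needing care is the systematic use of torsion freeness, in two places. First, it makes the vertical maps injective, so that ``cartesian'' has the elementwise meaning used above. Second, it allows us to cancel powers of $f$ when passing between equalities in $N[\frac{1}{f}]$ and equalities in $N$, and likewise for $M$. Beyond this the argument is routine, and the induction on $k$ is the main step.
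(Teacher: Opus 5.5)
Your proof is correct, and it is essentially the paper's argument written out elementwise: the paper notes that (1) is equivalent to the square with horizontal maps ``multiplication by $f$'' (on $M$ and $N$, vertical maps $\varphi$) being cartesian, gets (2) by passing to the filtered colimit $M[\frac{1}{f}]=\varinjlim(M\xrightarrow{f}M\xrightarrow{f}\cdots)$ (your induction on $k$ is exactly the cartesianness of the iterated $f^k$-squares, and your last step is the colimit), and proves (2)$\Rightarrow$(1) by a pasting argument using that $f$ acts invertibly on $M[\frac{1}{f}]$, which is what your choice $x=m/f$ does by hand. The categorical phrasing is what lets the paper transfer the lemma to rings in the next proposition via the forgetful functor, while your version makes the two uses of torsion freeness explicit.
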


\begin{proof}
Since both $M$ and $N$ are $f$-torsion free, we have the commutative diagram with exact rows
\[
\begin{tikzcd}
0 \rar & M \rar["f"] \dar["\varphi"'] & M \rar \dar["\varphi"] & M/fM \rar \dar & 0 \\
0 \rar & N \rar["f"'] & N \rar & N/fN \rar & 0.
\end{tikzcd}
\]
Observe that the condition (1) is equivalent to the condition that the left-hand square is cartesian. Hence the implication ``(1) $\Rightarrow$ (2)'' follows from the fact that filtered colimits commute with finite limits in the category of $R$-modules. To show the converse, consider the commutative diagram
\[
\begin{tikzcd}
M \rar["f"] \dar["\varphi"'] & M \rar[hookrightarrow] \dar["\varphi"] & M[\frac{1}{f}] \dar["\varphi_f"] \\
N \rar["f"] & N \rar[hookrightarrow] & N[\frac{1}{f}].
\end{tikzcd}
\]
If the condition (2) holds, then the right-hand square is cartesian. Moreover, since the multiplication by $f$ on $M[\frac{1}{f}]$ is an isomorphism, we see that the outer square is also cartesian. Hence the left-hand square is cartesian, that is, the condition (1) holds.
\end{proof}

The similar assertion holds for algebras:

\begin{proposition}
\label{prop:cartesian2}
Let $\varphi\colon A\to B$ be a ring homomorphism such that $A$ and $B$ are $f$-torsion free for an element $f\in A$. Then the following conditions are equivalent.
  \begin{enumerate}
  \item The induced homomorphism $A/fA\to B/fB$ is injective.
  \item The commutative diagram of of rings
  \begin{equation}
  \label{eq:pb}
  \begin{tikzcd}
  A \rar["\varphi"] \dar[hookrightarrow] & B \dar[hookrightarrow] \\
  A[\frac{1}{f}] \rar["\varphi_f"'] & B[\frac{1}{f}]
  \end{tikzcd}
  \end{equation}
  is cartesian.
  \end{enumerate}
\end{proposition}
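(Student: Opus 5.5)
The plan is to reduce \cref{prop:cartesian2} to \cref{lem:cartesian} by viewing everything as $A$-modules. Regard $B$ as an $A$-module via $\varphi$. Then $\varphi\colon A\to B$ is a homomorphism of $A$-modules, and both are $f$-torsion free as $A$-modules: $f$ acts on $B$ by multiplication by $\varphi(f)$, and $B$ is $\varphi(f)$-torsion free by hypothesis. Here ``$f$-torsion free for $f\in A$'' is understood as torsion freeness with respect to $\varphi(f)$ on $B$. Also, $B[\frac{1}{f}]$ as a localization of the $A$-module $B$ coincides with the ring $B[\frac{1}{\varphi(f)}]$. Likewise $B/fB$ as an $A$-module quotient equals $B/\varphi(f)B$, and $\varphi_f$ is the induced localized map.

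With these identifications, condition (1) of \cref{prop:cartesian2} is literally condition (1) of \cref{lem:cartesian} for $R=A$, $M=A$, $N=B$. So the only remaining point is to compare ``cartesian'' in the two settings: a square of rings versus the underlying square of $A$-modules. The key observation is that the forgetful functor from rings (or $A$-algebras) to $A$-modules creates finite limits, in particular pullbacks. For a diagram $A[\frac{1}{f}]\to B[\frac{1}{f}]\leftarrow B$ of rings, the fibre product $A[\frac{1}{f}]\times_{B[\frac{1}{f}]}B$ is the set-theoretic fibre product with componentwise ring structure. Its underlying module is the module fibre product. Hence the square \eqref{eq:pb} is cartesian in rings if and only if the canonical map $A\to A[\frac{1}{f}]\times_{B[\frac{1}{f}]}B$ is bijective, which is exactly cartesianness as $A$-modules.

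Thus (1)$\Leftrightarrow$(2) follows directly from \cref{lem:cartesian}. There is no real obstacle. The only care needed is the bookkeeping that the module-theoretic localization and quotient of $B$ over $A$ agree with the ring-theoretic ones over $\varphi(f)$, and that the bijectivity criterion for pullbacks is the same in both categories.
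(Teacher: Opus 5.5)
Your proposal is correct and is essentially the paper's argument: the paper also reduces to \cref{lem:cartesian} by noting that the forgetful functor out of rings preserves and reflects finite limits (and filtered colimits, which handles the localizations). The only difference is bookkeeping: the paper forgets to abelian groups, while you forget to $A$-modules via $\varphi$ and check directly that the module-theoretic quotient and localization of $B$ agree with the ring-theoretic ones.
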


\begin{proof}
Since the forgetful functor from the category of rings to the category of abelian groups preserves and reflects finite limits and filtered colimits, the proposition can be reduced to \cref{lem:cartesian}.
\end{proof}

The following lemma will be useful later.

\begin{lemma}
\label{lem:ci cartesian2}
Let $A\to B$ be a ring homomorphism such that $A$ and $B$ are $f$-torsion free for an element $f\in A$. Assume that the diagram of rings
\[
\begin{tikzcd}
A \rar["\varphi"] \dar[hookrightarrow] & B \dar[hookrightarrow] \\
A[\frac{1}{f}] \rar["\varphi_f"'] & B[\frac{1}{f}]
\end{tikzcd}
\]
is cartesian (i.e., $\varphi_f^{-1}(B)=A$). If $A$ is completely integrally closed in $A[\frac{1}{f}]$, then every element of $\Im(\varphi_f)$ that is almost integral over $B$ lies in $\Im\varphi$.
\end{lemma}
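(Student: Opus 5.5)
The idea is to pull the almost integrality of an element of $\Im(\varphi_f)$ back to $A[\frac{1}{f}]$ through the cartesian square, and then use that $A$ is completely integrally closed. Let $y\in \Im(\varphi_f)$ be almost integral over $B$, and choose $x\in A[\frac{1}{f}]$ with $\varphi_f(x)=y$. It suffices to show that $x$ is almost integral over $A$. Complete integral closedness of $A$ in $A[\frac{1}{f}]$ then gives $x\in A$, so $y=\varphi_f(x)=\varphi(x)\in\Im\varphi$.

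\textbf{Step 1: a uniform denominator for the powers of $y$.} By assumption $B[y]\subseteq M$ for some finitely generated $B$-submodule $M=\sum_{j=1}^r Bm_j$ of $B[\frac{1}{f}]$. Each generator $m_j$ can be written as $b_j/f^{N_j}$ with $b_j\in B$. Taking $N=\max_j N_j$, we get $M\subseteq f^{-N}B$, and hence
\[
f^N y^n\in B \quad\text{for all } n\geq 0 .
\]
Here $B$ is viewed inside $B[\frac{1}{f}]$, which is legitimate since $B$ is $f$-torsion free.

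\textbf{Step 2: pulling back along the cartesian square.} For each $n\geq 0$, consider the element $f^N x^n\in A[\frac{1}{f}]$. Since $\varphi_f$ is a ring homomorphism sending $f$ to $f$, we have $\varphi_f(f^N x^n)=f^N y^n\in B$. The cartesian hypothesis $\varphi_f^{-1}(B)=A$ then gives $f^N x^n\in A$ for every $n$. Therefore
\[
A[x]\subseteq f^{-N}A=A\cdot f^{-N}\subseteq A\bigl[\tfrac{1}{f}\bigr].
\]
The right-hand side is a finitely generated (in fact cyclic) $A$-submodule, so $x$ is almost integral over $A$, and we conclude as above.

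The only point needing care is Step 1: converting "contained in a finitely generated $B$-module" into a single power $f^N$ that clears denominators of all powers of $y$ at once. This works because the ambient ring is the localization $B[\frac{1}{f}]$ and $B$ is $f$-torsion free. Once that is in place, Step 2 is a direct application of the cartesian property, with no further obstacle.
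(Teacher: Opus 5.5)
Your proof is correct and follows the paper's argument essentially verbatim. You find a uniform exponent with $f^N y^n\in B$ for all $n$, pull it back through the cartesian square to get $f^N x^n\in A$, and conclude by complete integral closedness; your Step 1 just spells out why that exponent exists, which the paper simply asserts.
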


\begin{proof}
Let $y=\varphi_f(x)\in\Im(\varphi_f)$ be almost integral over $B$, where $x\in A[\frac{1}{f}]$. There exists some $c>0$ such that $f^cx\in A$ and $f^cy^n\in B$ for all $n>0$. Then $\varphi_f(f^cx^n) = f^cy^n\in B$, and thus $f^cx^n\in \varphi_f^{-1}(B)=A$ for all $n>0$. Consequently, $x\in A[\frac{1}{f}]$ is almost integral over $A$, and therefore $x\in A$ by assumption. Hence, $y=\varphi_f(x)=\varphi(x)\in \Im\varphi$, as claimed.
\end{proof}

For the cartesian diagram of type \eqref{eq:pb}, the converse of \cref{lem:ci cartesian} holds: 

\begin{proposition}
\label{prop:ciBL}
Let $A$ be a ring, and $f\in A$ a non-zero-divisor. Let $\varphi\colon A\to B$ be a $f$-torsion free $A$-algebra that induces an isomorphism $A/fA\xr{\cong}B/fB$. Then the following conditions are equivalent.
  \begin{enumerate}
  \item $A$ is completely integrally closed in $A[\frac{1}{f}]$.
  \item $B$ is completely integrally closed in $B[\frac{1}{f}]$.
  \end{enumerate}
\end{proposition}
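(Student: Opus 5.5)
By \cref{prop:cartesian2}, the isomorphism $A/fA\cong B/fB$ makes the square \eqref{eq:pb} cartesian, since $A$ and $B$ are $f$-torsion free. Then (2)$\Rightarrow$(1) is immediate from \cref{lem:ci cartesian} applied to this square, because the vertical maps are injective.

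For (1)$\Rightarrow$(2), take $y\in B[\frac{1}{f}]$ almost integral over $B$. Choose $c\ge 0$ with $f^c y^n\in B$ for all $n\ge 0$ (the case $n=0$ gives $f^c\in B$, and the case $n=1$ gives $f^cy\in B$). The plan is to reduce to \cref{lem:ci cartesian2}: we want to show $y\in\Im(\varphi_f)$, and then that lemma gives $y\in\Im\varphi\subseteq B$.

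The key step, and the main obstacle, is showing that $B[\frac{1}{f}]=B+\Im(\varphi_f)$, or at least that $y$ lies in this sum. The surjectivity of $A/fA\to B/fB$ gives $B=\varphi(A)+fB$, and iterating gives $B=\varphi(A)+f^mB$ for every $m$. This alone does not reach $B[\frac1f]$, so I would argue directly with $y$ instead. Set $z=f^cy\in B$ and write $z=\varphi(a)+f^c b$ with $a\in A$, $b\in B$. Then $y=\varphi_f(a/f^c)+b$, so $y\in\Im(\varphi_f)+B$ after all.

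Put $x=a/f^c\in A[\frac1f]$ and $w=\varphi_f(x)=y-b$. Since $b\in B$ is integral over $B$, the almost integral elements over $B$ in $B[\frac1f]$ form a ring, so $w$ is almost integral over $B$. To justify this, choose $d$ with $f^d y^i\in B$ for all $i$; then $f^d w^n=\sum_i\binom{n}{i} f^d y^i(-b)^{n-i}\in B$. Now $w\in\Im(\varphi_f)$ is almost integral over $B$, so \cref{lem:ci cartesian2} gives $w\in\Im\varphi\subseteq B$. Hence $y=w+b\in B$, which proves (2).
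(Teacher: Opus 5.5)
Your proof is correct and is essentially the paper's argument. For (2)$\Rightarrow$(1) you apply \cref{lem:ci cartesian} to the cartesian square from \cref{prop:cartesian2}; for (1)$\Rightarrow$(2) you use $B=\varphi(A)+f^cB$ to write $y=\varphi_f(a/f^c)+b$ with $b\in B$, apply \cref{lem:ci cartesian2} to the almost integral element $y-b\in\Im(\varphi_f)$, and your binomial check that $y-b$ is almost integral merely spells out a step the paper leaves implicit.
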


\begin{proof}
Note that we are in the situation as in \cref{lem:ci cartesian2} by \cref{prop:cartesian2}.

(2) $\Rightarrow$ (1): This implication follows from \cref{lem:ci cartesian}.

(1) $\Rightarrow$ (2): This implication has been essentially established in the proof of \cite[Corollary 2.7]{NS18}; we provide the full proof for the reader's convenience.
Pick an element $x\in B[\frac{1}{f}]$ that is almost integral over $B$. Then we have $f^dx\in B$ for some $d>0$. Since $B=\varphi(A)+f^dB$ by assumption, there exists some $a\in A$ such that $f^dx - \varphi(a)\in f^dB$ or equivalently, $x-\varphi_f\left(\frac{a}{f^d}\right)\in B$. Then, since $x$ and any element in $B$ are almost integral over $B$, we find that $\varphi_f\left(\frac{a}{f^d}\right)$ is almost integral over $B$. But then $\varphi_f\left(\frac{a}{f^d}\right)\in B$ by \cref{lem:ci cartesian2}. Hence $x\in B$, as desired.
\end{proof}

As a special case, we obtain the lemma of Beauville--Laszlo (cf.\ \cite[Lemma 2.6]{NS18}) and \cite[Corollary 2.8]{NS18}.

\begin{corollary}
\label{cor:BLlemma}
Let $A$ be a ring, and $f\in A$ a non-zero-divisor. Let $\varphi\colon A\to \wh{A}$ denote the $f$-adic completion of $A$. Then the following assertions hold.
  \begin{enumerate}
  \item $\wh{A}$ is $f$-torsion free.
  \item The commutative diagram of rings
  \[
  \begin{tikzcd}
  A \rar["\varphi"] \dar[hookrightarrow] & \wh{A} \dar[hookrightarrow] \\
  A[\frac{1}{f}] \rar["\varphi_f"'] & \wh{A}[\frac{1}{f}]
  \end{tikzcd}
  \]
  is cartesian.
  \item $A$ is completely integrally closed in $A[\frac{1}{f}]$ if and only if $\wh{A}$ is completely integrally closed in $\wh{A}[\frac{1}{f}]$.
  \end{enumerate}
\end{corollary}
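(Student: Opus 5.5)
The plan is to reduce everything to \cref{prop:ciBL} and \cref{prop:cartesian2}. For this it suffices to show two things: that $\wh{A}$ is $f$-torsion free, and that $\varphi$ induces an isomorphism $A/fA\xr{\cong}\wh{A}/f\wh{A}$. Granting these, (2) follows from \cref{prop:cartesian2}, since the induced map on quotients is injective. Likewise (3) is exactly \cref{prop:ciBL} applied to $\varphi\colon A\to\wh{A}$, with $f$ a non-zero-divisor on $A$ by hypothesis.

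\textbf{Step 1: the isomorphism $A/f^nA\cong \wh{A}/f^n\wh{A}$.} Write $\wh{A}=\varprojlim_n A/f^nA$, and let $\pi_n\colon\wh{A}\to A/f^nA$ be the projections, which are surjective. The composite $A\to\wh{A}\xr{\pi_n}A/f^nA$ is the canonical surjection. It therefore suffices to show $\Ker\pi_n=f^n\wh{A}$, because then $\wh{A}/f^n\wh{A}\cong A/f^nA$ compatibly with $\varphi$.

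\begin{itemize}
\item $f^n\wh{A}\subseteq\Ker\pi_n$: this is clear.
\item $\Ker\pi_n\subseteq f^n\wh{A}$: take $x=(x_m)_m\in\Ker\pi_n$ with $x_m\in A/f^mA$. For $m\ge n$, lift $x_m$ to $a_m\in A$. Then $a_m\in f^nA$, say $a_m=f^nb_m$. Compatibility gives $a_{m+1}-a_m\in f^mA$, so $f^n(b_{m+1}-b_m)\in f^mA$.
\end{itemize}

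Here we use that $f$ is a non-zero-divisor on $A$: from $f^n(b_{m+1}-b_m)=f^mc$ we get $b_{m+1}-b_m=f^{m-n}c$. Hence the classes of $b_m$ in $A/f^{m-n}A$ form a compatible system, defining $y\in\wh{A}$ with $f^ny=x$.

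I expect this to be the only real obstacle, since for non-Noetherian $A$ one cannot quote the usual Artin--Rees facts. The argument above avoids them by using only that $f$ is regular and that the topology is principal. One should also double-check that $f^ny=x$ holds in every component and not just up to reindexing; this reduces to the identity $f^nb_m=a_m$ modulo $f^mA$.

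\textbf{Step 2: $f$-torsion freeness of $\wh{A}$.} Suppose $fx=0$ with $x=(x_m)\in\wh{A}$, and lift $x_{m+1}$ to $a\in A$. Then $fa\in f^{m+1}A$, so $a\in f^mA$ because $f$ is regular. Hence $x_m=0$ for all $m$, and $x=0$. Torsion freeness for all powers of $f$ follows by iterating this.

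Combining Steps 1 and 2 with the reductions in the first paragraph gives (1), (2) and (3).
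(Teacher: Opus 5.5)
Your proposal is correct and follows the paper's route: (2) and (3) come from \cref{prop:cartesian2} and \cref{prop:ciBL}. The only difference is that you prove directly the $f$-torsion freeness of $\wh{A}$, which the paper cites from Fujiwara--Kato, and the isomorphism $A/fA\cong\wh{A}/f\wh{A}$, which the paper uses implicitly when it invokes \cref{prop:ciBL}.
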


\begin{proof}
(1) is easy; see \cite[Chapter II, Lemma 1.1.5]{FKI}. The assertions (2) and (3) follow from \Cref{prop:cartesian2,prop:ciBL}, respectively.
\end{proof}

Finally, we have the following statement.

\begin{proposition}
\label{prop:ci cartesian3}
Let $\{A_i\}_{i\in I}$ be an inductive system of rings indexed by a directed set $I$, and set $A\coloneqq\varinjlim_{i\in I}A_i$. Fix $i\in I$, and choose an element $f\in A_i$. Suppose that $A_j$ is $f$-torsion free for any $j\geq i$ (hence $A$ is $f$-torsion free). If the diagrams of rings
\[
\begin{tikzcd}
A_j \rar \dar & A \dar \\
A_j[\frac{1}{f}] \rar & A[\frac{1}{f}]
\end{tikzcd}
\]
is cartesian for any $j\geq 0$, then the following conditions are equivalent.
  \begin{enumerate}
  \item $A_j$ is completely integrally closed in $A_j[\frac{1}{f}]$ for any $j\geq i$.
  \item $A$ is completely integrally closed in $A[\frac{1}{f}]$.
  \item The $f$-adically completion $\wh{A}$ is completely integrally closed in $\wh{A}[\frac{1}{f}]$.
  \end{enumerate}
\end{proposition}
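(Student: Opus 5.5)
The plan is to prove (2)$\Leftrightarrow$(3), (2)$\Rightarrow$(1) and (1)$\Rightarrow$(2). The first two follow from results already established; the last is the only step needing a genuinely new argument.

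For (2)$\Leftrightarrow$(3), $f$ is a non-zero-divisor on $A$, since every $A_j$ with $j\geq i$ is $f$-torsion free, the index set is directed, and filtered colimits are exact. So \cref{cor:BLlemma}(3), applied to $A$ and $f$, gives the equivalence directly.

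For (2)$\Rightarrow$(1), fix $j\geq i$. By hypothesis the square with $A_j\to A$ on top and $A_j[\frac{1}{f}]\to A[\frac{1}{f}]$ on the bottom is cartesian. The vertical maps are injective because $A_j$ and $A$ are $f$-torsion free. \cref{lem:ci cartesian} then shows that complete integral closedness of $A$ in $A[\frac{1}{f}]$ passes to $A_j$ in $A_j[\frac{1}{f}]$.

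For (1)$\Rightarrow$(2), take $x\in A[\frac{1}{f}]$ almost integral over $A$. Fix $c>0$ with $f^cx^n\in A$ for all $n\geq 0$; I would fix this form of almost integrality first, since it is the one used in \cref{lem:ci cartesian2}.
\begin{enumerate}
\item Since $A[\frac{1}{f}]=\varinjlim_j A_j[\frac{1}{f}]$, choose $j\geq i$ and $x_j\in A_j[\frac{1}{f}]$ mapping to $x$.
\item For each $n$, the element $f^cx_j^n\in A_j[\frac{1}{f}]$ maps to $f^cx^n\in A$. Since the square for $j$ is cartesian, $f^cx_j^n$ lies in $A_j$.
\item Hence $A_j[x_j]\subseteq f^{-c}A_j$. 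This is a finitely generated $A_j$-submodule of $A_j[\frac{1}{f}]$, so $x_j$ is almost integral over $A_j$.
\item By (1), $x_j\in A_j$, and therefore $x\in A$.
\end{enumerate}

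The main obstacle is step 2 above: the bound $f^cx^n\in A$ has to descend to the single level $A_j$ uniformly in $n$. Without the cartesian hypothesis one would need a different $j$ for each $n$, and the argument would fail. The cartesian hypothesis, applied at one fixed $j$ to all $n$ simultaneously, removes this issue, so everything reduces to using it as in \cref{lem:ci cartesian2}.
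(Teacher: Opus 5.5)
Your proposal is correct and follows essentially the same route as the paper. The paper proves (2)$\Leftrightarrow$(3) via \cref{cor:BLlemma}(3) and (2)$\Rightarrow$(1) via \cref{lem:ci cartesian}. For (1)$\Rightarrow$(2) it lifts $x$ to some $A_j[\frac{1}{f}]$ and cites \cref{lem:ci cartesian2}; you simply write out the argument of that lemma inline, including the uniform-in-$n$ descent of $f^cx^n$ to level $j$ via the cartesian square.
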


\begin{proof}
(2) $\Rightarrow$ (1): This follows from \cref{lem:ci cartesian}.

(1) $\Rightarrow$ (2): Since every element in $A[\frac{1}{f}]$ is represented by an element $A_j[\frac{1}{f}]$ for some $j\geq i$, the assertion follows from \cref{lem:ci cartesian2}.

(2) $\Leftrightarrow$ (3): This is \cref{cor:BLlemma}(3).
\end{proof}

%%%%%%%%%%%%%%%%%%%%%%%%%%%%%%%%%%%%%%%%%%%%%%%%%%%%%%%%%%%
%%%%%%%%%%%%%%%%%%%%%%%%%%%%%%%%%%%%%%%%%%%%%%%%%%%%%%%%%%%
\section{Monoidal maps for perfectoid towers}
\label{s:3}

In this section, we briefly review Fontaine's monoidal maps and perfectoid towers, construct a refined version of monoidal map, and then give a proof of \cref{thm:A}.

%%%%%%%%%%%%%%%%%%%%%%%%%%%%%%%%%%%%%%%%%%%%%%%%%%%%%%%%%%%
\subsection{Review on Fontaine's monoidal maps}

We say that an $\F_p$-algebra is \emph{perfect} if its absolute Frobenius is bijective. One can easily show (cf.\ \cite[Lemma 2.2]{EHS24}) that every perfect $\F_p$-algebra is reduced. Given any ring, we have a perfect $\F_p$-algebra in the following way.

\begin{definition}
Let $A$ be a ring. Then we denote by $A^\flat$ the $\F_p$-algebra
\[
A^\flat \coloneqq \varprojlim_{\mathrm{Frob}}A/pA = \varprojlim(\cdots \xr{\mathrm{Frob}} A/pA \xr{\mathrm{Frob}} A/pA)
\]
This is called the \emph{inverse perfection} of $A$ (when $A$ is an $\F_p$-algebra) or the \emph{tilt} of $A$ (when $A$ is not an $\F_p$-algebra).
\end{definition}

One can easily show (cf.\ \cite[Proposition 2.6.1]{EHS24}) that $A^\flat$ is a perfect $\F_p$-algebra.
If $A$ is $p$-adically complete, then $A^\flat$ admits another description as follows.

\begin{lemma}
\label{lem:monoidal}
Let $A$ be an $I$-adically complete ring for an ideal $I\subseteq A$ containing $p$ (note that it follows that $A$ is $p$-adically complete by \cite[Chapter 0, Proposition 7.2.5]{FKI}). Then, in the commutative diagram of topological monoids induced by the canonical projections
\[
\begin{tikzcd}
\varprojlim\limits_{x\mapsto x^p}A \rar \ar[rd] & A^\flat \dar \\
 & \varprojlim\limits_{\mathrm{Frob}}A/I,
\end{tikzcd}
\]
all arrows are isomorphisms, where $A/pA$ and $A/I$ are considered with the discrete topology.
\end{lemma}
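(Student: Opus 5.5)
The plan is to prove the following general claim. Let $J\subseteq A$ be any ideal with $p\in J\subseteq I$, or $J=I$. Then the projection
\[
\pi_J\colon \varprojlim_{x\mapsto x^p}A\longrightarrow \varprojlim_{\mathrm{Frob}}A/J
\]
is a bijection. This is a homeomorphism of topological monoids, where the source carries the inverse limit of the $I$-adic topologies and the target carries the limit of the discrete topologies.

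Applying the claim with $J=pA$ and with $J=I$ shows that the diagonal and horizontal arrows are isomorphisms. The vertical arrow is induced by the surjection $A/pA\to A/I$, which is compatible with Frobenius, so the triangle commutes. Hence the vertical arrow is the composite of the diagonal arrow with the inverse of the horizontal one, and is therefore an isomorphism too. Multiplicativity of all maps is clear, since each projection $A\to A/J$ is multiplicative and the transition maps $x\mapsto x^p$ and $\mathrm{Frob}$ are compatible with multiplication.

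The key input is the standard estimate: if $x,y\in A$ satisfy $x\equiv y \bmod J$, then
\[
x^{p^k}\equiv y^{p^k}\bmod J^{k+1}.
\]
This is proved by induction on $k$. Write $x^{p^{k}}=y^{p^{k}}+z$ with $z\in J^{k+1}$. Then
\[
x^{p^{k+1}}=(y^{p^k}+z)^p=y^{p^{k+1}}+\sum_{0<m<p}\tbinom{p}{m}y^{p^k(p-m)}z^m+z^p .
\]
The middle terms lie in $pJ^{k+1}\subseteq J^{k+2}$ because $p\in J$, and $z^p\in J^{k+2}$. Since $J\subseteq I$, we get $J^{k+1}\subseteq I^{k+1}$.

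\textbf{Injectivity.} Suppose $(a_n),(b_n)\in\varprojlim_{x\mapsto x^p}A$ have the same image, so $a_n\equiv b_n\bmod J$ for all $n$. Then
\[
a_n=a_{n+k}^{p^k}\equiv b_{n+k}^{p^k}=b_n \bmod I^{k+1}
\]
for every $k$. Since $A$ is $I$-adically separated, $a_n=b_n$.

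\textbf{Surjectivity.} Given $(\bar x_n)\in\varprojlim_{\mathrm{Frob}}A/J$, choose arbitrary lifts $x_n\in A$. Since $\bar x_{n+k+1}^{\,p}=\bar x_{n+k}$, the estimate gives
\[
x_{n+k+1}^{p^{k+1}}\equiv x_{n+k}^{p^k}\bmod I^{k+1}.
\]
So for fixed $n$ the sequence $(x_{n+k}^{p^k})_k$ is $I$-adically Cauchy. Let $a_n$ be its limit; this uses completeness. One checks directly that $a_{n+1}^p=a_n$, and that $a_n\equiv x_n\bmod J$ because every term of the sequence is congruent to $x_n$ modulo $J$ and $J$ is closed. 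The closedness of $J$ holds for $J=I$ and for $J=pA$, since $A$ is $p$-adically complete by the cited result of \cite{FKI} and $I$-adic limits refine $p$-adic ones. Alternatively, one can avoid the closedness issue by noting that $a_n-x_n\in J+I^{k+1}\subseteq J$ for $k\geq 0$ when $I\subseteq J$. In the case $J=pA$, one instead uses the explicit congruence of each term with $x_n$ modulo $p$ together with $p$-adic closedness of $pA$.

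\textbf{Topology.} The main obstacle is matching the topologies. By the same estimate, agreement modulo $J$ of the coordinates up to index $n+k$ forces agreement modulo $I^{k+1}$ of the coordinates up to index $n$. Conversely, $I$-adic closeness implies agreement modulo $J$ only when $J$ is $I$-adically open. That holds for $J=I$. For $J=pA$ it is subtle, since $pA$ need not be open. Here I would take the topology on the source to be the one induced from $\prod A$ with the $p$-adic topology, where $pA$ is open. Then $I$- versus $p$-adic is handled by the case $J=I$ and commutativity of the triangle.
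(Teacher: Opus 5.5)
Your algebraic argument is correct and is essentially the paper's proof. The paper notes that the horizontal arrow is the special case $I=pA$ of the diagonal one, which is legitimate because $A$ is $p$-adically complete. It then exhibits the inverse $(\bar x_n)_n\mapsto(\lim_k x_{n+k}^{p^k})_n$ and leaves the verification to the literature. Your estimate $x\equiv y \bmod J\Rightarrow x^{p^k}\equiv y^{p^k}\bmod J^{k+1}$ and your injectivity/surjectivity checks are exactly that verification.

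One justification should be fixed. For $J=pA$ you claim $pA$ is $I$-adically closed because $A$ is $p$-adically complete. But the $p$-adic topology is finer than the $I$-adic one, so $p$-adic closedness says nothing about $I$-adic closedness. What works instead: for $J=pA$ your estimate gives $x_{n+k+1}^{p^{k+1}}\equiv x_{n+k}^{p^k}\bmod p^{k+1}A$. So the sequence converges $p$-adically, its $p$-adic limit equals its $I$-adic limit, and hence $a_n\equiv x_n\bmod p$. Equivalently, run your $J=I$ argument for the pair $(A,pA)$, which is the paper's reduction.

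The step that genuinely fails is the end of your topology paragraph. Giving the source the limit of the $p$-adic topologies does make the horizontal arrow a homeomorphism. But the vertical arrow $A^\flat\to\varprojlim_{\mathrm{Frob}}A/I$ is then a homeomorphism only if $\pi_I^{-1}$ is continuous into that $p$-adic limit topology. The case $J=I$ gives continuity only into the $I$-adic limit topology, and ``commutativity of the triangle'' does not bridge this.

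In fact no argument can. Let $A$ be the $T$-adic completion of $\mathbb{F}_p[T^{1/p^\infty}]$ and $I=(T)$.
\begin{itemize}
\item Here $A/pA=A$ and Frobenius is bijective, so $A^\flat$, a limit of discrete spaces along bijections, is discrete.
\item On the other hand, $\varprojlim_{\mathrm{Frob}}A/TA\cong A$ carries the $T$-adic topology: the basic neighborhoods of $0$ correspond to $T^{p^M}A$.
\end{itemize}
So the vertical arrow is a continuous bijection but not a homeomorphism, whatever topology is put on $\varprojlim_{x\mapsto x^p}A$. With the $I$-adic topology on $A$, the horizontal arrow is not even continuous.

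Thus the arrows are in general isomorphisms of monoids, not of topological monoids, and that is all the paper's proof establishes. You should drop the homeomorphism claims for $J=pA$ and for the vertical arrow rather than try to prove them. They do hold for the diagonal arrow with the $I$-adic topology and for the horizontal arrow with the $p$-adic one.
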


\begin{proof}
Since $A$ is $p$-adically complete, it suffices to show that $\varprojlim\limits_{x\mapsto x^p}A \to \varprojlim\limits_{\mathrm{Frob}}A/I$ is an isomorphism. To show this, one can check that, as in \cite[Lemma 2.5]{EHS24}, the map
\begin{equation}
\label{eq:inverse}
(x_n\ \mathrm{mod}\ I)_{n\geq 0} \mapsto \lim_{n\to\infty}x_n^{p^n}
\end{equation}
gives the inverse mapping; see also \cite[Lemma 2.4]{ALB20}.
\end{proof}

By the above lemma, we have the following notion.

\begin{definition}
Let $A$ be a $p$-adically complete ring. We define the \emph{monoidal map} $\sharp\colon A^\flat \to A$ as  the composite map of topological monoids
\[
\sharp \colon A^\flat \xleftarrow{\cong} \varprojlim_{x\mapsto x^p}A \xr{\pr_0} A,
\]
where the first map is the isomorphism from \cref{lem:monoidal} and the second map is the $0$-th projection.
\end{definition}

\begin{proposition}
Let $A$ be a $p$-adically complete ring. Then $\sharp\colon A^\flat\to A$ induces a bijection on the sets of idempotents $\mathrm{Idem}(A^\flat) \xr{\sim} \mathrm{Idem}(A)$. In particular, $A$ is connected if and only if so is $A^\flat$.\footnote{It can be easily shown (cf.\ \cite[Proposition 2.6.2]{EHS24}) that if $A$ is an integral domain, then so is $A^\flat$.}%footnote{Note that $A^\flat\neq 0$ if $A$ is $p$-adically complete and $A\neq 0$.}}.
\end{proposition}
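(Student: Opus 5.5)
The plan is to go through the intermediate ring $\varprojlim_{x\mapsto x^p}A$ and the reduction $A/pA$. First I will check that $\sharp$ maps idempotents to idempotents: $\sharp$ is multiplicative, so $e^2=e$ in $A^\flat$ gives $\sharp(e)^2=\sharp(e)$. Next I will show that $\sharp$ restricted to idempotents is the composite
\[
\mathrm{Idem}(A^\flat)\xr{\pr_0}\mathrm{Idem}(A/pA)\xleftarrow{\ \cong\ }\mathrm{Idem}(A).
\]
Here the right-hand arrow is reduction modulo $p$. It is bijective because $A$ is $p$-adically complete, hence $pA$ is contained in the Jacobson radical and $(A,pA)$ is henselian. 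Concretely, idempotents lift uniquely modulo $pA$: existence comes from successive approximation along $A/p^nA$ using completeness, and uniqueness holds because two idempotents that agree modulo a radical ideal coincide, since $(e-e')^3=e-e'$.

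Compatibility of the two descriptions is immediate. Via the isomorphism of \cref{lem:monoidal}, an element $(x_n)_n\in A^\flat$ corresponds to a compatible system $(y_n)$ with $y_{n+1}^p=y_n$ and $y_n\equiv x_n \bmod p$. Then $\sharp(x)=y_0$, which reduces to $x_0$ modulo $p$.

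It then suffices to show that $\pr_0\colon\mathrm{Idem}(A^\flat)\to\mathrm{Idem}(A/pA)$ is bijective. Since $e^p=e$ for an idempotent, Frobenius fixes every idempotent of $A/pA$. So the constant sequence $(\bar e,\bar e,\dots)$ lies in $A^\flat$, is idempotent, and has $0$-th component $\bar e$; this gives surjectivity. For injectivity, let $(x_n)$ be idempotent in $A^\flat$. Each $x_n$ is then idempotent in $A/pA$, so $x_n=x_n^p=x_{n-1}$. Hence the sequence is constant and determined by $x_0$.

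The statement about connectedness follows at once: a ring is connected exactly when it has precisely two idempotents, $0\neq 1$. This also handles the degenerate case, because $A=0$ iff $A/pA=0$ (using $p\in\mathrm{rad}(A)$) iff $A^\flat=0$.

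The only mildly delicate step is the lifting of idempotents along $A\to A/pA$. I would cite standard henselianity of $p$-adically complete pairs for it, or prove it by Newton iteration with $e\mapsto 3e^2-2e^3$.
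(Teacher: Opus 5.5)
Your proof is correct, but it routes through $A/pA$ where the paper does not.

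The paper stays in the monoid $\varprojlim_{x\mapsto x^p}A\cong A^\flat$ of \cref{lem:monoidal}. An idempotent there is a sequence $(y_n)$ of idempotents of $A$ with $y_{n+1}^p=y_n$. Your own observation that $y^p=y$ for an idempotent, applied in $A$ instead of $A/pA$, forces $y_n=y_0$ for all $n$. So the inverse of $\sharp$ on idempotents is just $e\mapsto(e,e,e,\ldots)$. Completeness enters only through \cref{lem:monoidal}, and no lifting of idempotents along $A\to A/pA$ is needed.

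Your factorization $\mathrm{Idem}(A^\flat)\to\mathrm{Idem}(A/pA)\leftarrow\mathrm{Idem}(A)$ costs you the separate input that $(A,pA)$ is henselian. Part of that input is redundant in your setup. Since $\sharp$ is multiplicative and $\sharp(x)\equiv x_0 \bmod p$, the element $\sharp\bigl((\bar e,\bar e,\ldots)\bigr)$ is already an idempotent of $A$ lifting $\bar e$. So you only really use uniqueness of idempotent lifts (your $(e-e')^3=e-e'$ argument). Even that can be bypassed by evaluating \eqref{eq:inverse} on the constant lift, which gives $\lim_n e^{p^n}=e$.

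What your route does buy is that it has the same shape as the argument the paper later uses for the tower analogue $\mathrm{Idem}(R_j)\cong\mathrm{Idem}(R_j^{q.\textnormal{frep}})$. There one must pass through $\mathrm{Idem}(R_j/IR_j)$, and henselianity of $R_j$ is genuinely required because $R_j$ itself need not be complete.
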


\begin{proof}
The map $A \to A^{\flat} : e\mapsto(e,e,e,\ldots)$ gives the inverse of the map $\sharp$.
\end{proof}

Let us list some fundamental results on the monoidal map.

\begin{theorem}
\label{thm:sharp}
Let $A$ be a perfectoid ring that is $\varpi$-adically complete for a $\varpi\in A$ with $p\in\varpi^pA$. Let $\varpi^\flat\in A^\flat$ such that $(\varpi^\flat)^\sharp$ is a unit multiple of $\varpi$.
  \begin{enumerate}
  \item $\sharp\colon A^\flat\to A$ induces an isomorphism of $\F_p$-algebras
  \[
  A^\flat/(\varpi^\flat)^p A^\flat \xr{\cong} A/\varpi^p A.
  \]
  In particular, the ideal $(\varpi^\flat)$ is independent of the choice of $\varpi^\flat$.
  \item $\sharp\colon A^\flat\to A$ induces an isomorphism of abelian groups $(A^\flat)_{\textrm{$\varpi^\flat$-}\mathrm{tor}} \xr{\cong} A_{\textrm{$\varpi$-}\mathrm{tor}}$. In particular, $A$ is $\varpi$-torsion free if and only if $A^\flat$ is $\varpi^\flat$-torsion free.
  \item If $A$ is $\varpi$-torsion free and $A$ is completely integrally closed (resp.\ integrally closed) in $A[\frac{1}{\varpi}]$, then $A^\flat$ is completely integrally closed (resp.\ integrally closed) in $A^\flat[\frac{1}{\varpi^\flat}]$.
  \end{enumerate}
\end{theorem}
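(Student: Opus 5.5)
We plan to prove the three assertions in order, working throughout with the identification $A^\flat \cong \varprojlim_{x\mapsto x^p}A$ from \cref{lem:monoidal}. Under this identification an element $x\in A^\flat$ is a sequence $(x^{(n)})_{n\geq 0}$ with $(x^{(n+1)})^p=x^{(n)}$, and $x^\sharp=x^{(0)}$. Since $\sharp$ is multiplicative, every $p^n$-th root of $x$ is the shifted sequence, and $(x^{1/p^n})^\sharp=x^{(n)}$.

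\textbf{Step (1).} We use the description $A^\flat\cong\varprojlim_{\Frob}A/\varpi^pA$ from \cref{lem:monoidal}, applied with $I=(\varpi^p)$; this ideal contains $p$. Consider the $0$-th projection $\theta_0\colon A^\flat\to A/\varpi^pA$. It is a ring map, and modulo $\varpi^p$ it agrees with $\sharp$.
\begin{itemize}
\item \emph{Surjectivity.} The Frobenius $A/\varpi A\to A/\varpi^pA$ is an isomorphism for a perfectoid ring; this is the defining property in the $\varpi$-adic formulation. We use it to build compatible $p$-power roots inductively. Lifting these roots and then taking limits gives a surjection.
\item \emph{Kernel.} We show $\ker\theta_0=(\varpi^\flat)^pA^\flat$. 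The element $(\varpi^\flat)^p$ clearly lies in the kernel. Conversely, let $x=(\bar x_n)$ with $\bar x_0=0$. Injectivity of Frobenius $A/\varpi\to A/\varpi^p$ gives $\bar x_1\in \varpi A/\varpi^p$, and then inductively $\bar x_n\in \varpi^{1/p^{n-1}}$-multiples in the appropriate sense. Using the unit $u$ with $(\varpi^\flat)^\sharp=u\varpi$, we divide compatibly by $(\varpi^\flat)^{p}$.
\end{itemize}
We expect this kernel computation to be the main obstacle. The cleanest route is to show that $A^\flat$ is $\varpi^\flat$-adically complete and that the ideal $(\varpi^\flat)^{p}A^\flat$ is exactly the set of sequences with $x^{(0)}\in\varpi^pA$. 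This can be done by taking $p^n$-th roots: $x^{(n)}\in (\varpi^\flat)^{1/p^{n-1}}$-multiples, and these quotients are compatible.

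Independence of the ideal $(\varpi^\flat)$ then follows. If $\varpi'^\flat$ is another choice, then $\varpi'^\flat$ is congruent to a unit multiple of $\varpi^\flat$ modulo $(\varpi^\flat)^p$. Because units lift along the quotient by a topologically nilpotent ideal, the two elements generate the same ideal.

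\textbf{Step (2).} Let $x\in A^\flat$ be killed by $(\varpi^\flat)^N$.
\begin{itemize}
\item \emph{$\sharp$ lands in torsion.} Multiplicativity gives $(u\varpi)^N x^\sharp=0$, so $x^\sharp$ is $\varpi$-torsion. The same argument applied to $x^{1/p^n}$ kills it by $(\varpi^\flat)^{N/p^n}$ (perfectness), so every $x^{(n)}$ is torsion as well.
\item \emph{Additivity.} For torsion elements, $\sharp$ is additive. Indeed, $(x+y)^\sharp=\lim (x^{(n)}+y^{(n)})^{p^n}$, and the cross terms vanish because the $\varpi$-torsion of a perfectoid ring is killed by $\varpi^{1/p^\infty}$-multiples. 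Here we would invoke the standard fact that $A_{\varpi\text{-tor}}=A[\sqrt{\varpi A}]$-torsion is a $p$-root closed ideal in which Frobenius is bijective.
\item \emph{Bijectivity.} The inverse sends a torsion element $a$ to its sequence of unique $p$-power roots inside the torsion ideal. Uniqueness and existence both come from the bijectivity of Frobenius on $A_{\varpi\text{-tor}}$, which can be deduced from (1) and the perfectoid hypothesis.
\end{itemize}

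\textbf{Step (3).} Assume $A$ is $\varpi$-torsion free and completely integrally closed in $A[\frac1\varpi]$. By (2), $A^\flat$ is $\varpi^\flat$-torsion free. Take $z\in A^\flat[\frac{1}{\varpi^\flat}]$ almost integral over $A^\flat$, and write $z=x/(\varpi^\flat)^m$ with $(\varpi^\flat)^c x^k\in (\varpi^\flat)^{mk}A^\flat$ for all $k$.

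The key is multiplicativity together with torsion-freeness. Divisibility $y\mid w$ in $A^\flat$ implies $y^\sharp\mid w^\sharp$ in $A$. Hence $u^c\varpi^c (x^\sharp)^k\in \varpi^{mk}A$, so $x^\sharp/\varpi^m$ is almost integral over $A$ and therefore $x^\sharp\in\varpi^mA$.

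Applying the same argument to $z^{1/p^n}$, which is also almost integral, gives $x^{(n)}\in \varpi^{m/p^n}$ in the sense of $((\varpi^\flat)^{m/p^n})^\sharp A$. We then use (1) to conclude $x\in(\varpi^\flat)^mA^\flat$. Concretely, it suffices to have $x^\sharp\in \varpi^{m}A$ and to reduce to the case $m\le p$ by taking roots, where (1) directly gives the divisibility $x\in(\varpi^\flat)^mA^\flat$. Torsion-freeness then gives $z\in A^\flat$.

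The integrally closed case is identical, with "almost integral" replaced by an integral equation. One applies $\sharp$ to each monomial after first reducing modulo $(\varpi^\flat)^p$ via (1); alternatively, one notes that a root-closed-plus-completely-integrally-closed argument suffices.
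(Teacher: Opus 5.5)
Your argument for the completely-integrally-closed half of (3) is correct, given (1) and the easy fact that $\varpi$-torsion-freeness passes from $A$ to $A^\flat$. It applies $\sharp$ only to the divisibility $(\varpi^\flat)^cx^k\in(\varpi^\flat)^{mk}A^\flat$. Passing to $z^{1/p^n}$ with $m/p^n\le p$ then lets (1) turn $x^{(n)}\in\varpi_n^mA$, where $\varpi_n:=((\varpi^\flat)^{1/p^n})^\sharp$, back into $x^{1/p^n}\in(\varpi^\flat)^{m/p^n}A^\flat$. The integrally closed half, however, is not ``identical.'' A monic equation is a sum and $\sharp$ is not additive. Reducing modulo $(\varpi^\flat)^p$ via (1) only gives
\[
(x_n^\sharp)^k+\sum_{i=1}^k b_i^\sharp\,\varpi_n^{mi}(x_n^\sharp)^{k-i}\in\varpi^pA \qquad (x_n=x^{1/p^n},\ b_i=a_i^{1/p^n}).
\]
This is a monic equation for $w=x_n^\sharp/\varpi_n^m$ over $A$ only up to an error in $\varpi_n^{-mk}\varpi^pA$, which lies in $A$ once $mk\le p^{n+1}$ but not in general. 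The missing idea is to take $n$ large \emph{relative to the degree $k$ of the equation}. This is legitimate because $\Frob^{-n}$ is an automorphism of $A^\flat[\frac{1}{\varpi^\flat}]$ preserving $A^\flat$, so $z^{1/p^n}$ satisfies a monic equation of the same degree $k$ with coefficients $a_i^{1/p^n}$. Then $w$ is integral over $A$, hence $w\in A$, and (1) gives $z^{1/p^n}\in A^\flat$. Your ``alternative'' via complete integral closedness is not available, since only integral closedness of $A$ is assumed.

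Parts (1) and (2) are simply quoted in the paper from [CS24] (resting on the argument of [BMS1, Lemma 3.10]), and your sketches do not yet replace those citations.

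In (1), the inductive step $x^{(n)}\in\varpi_{n-1}A$ needs injectivity of $\Frob\colon A/\varpi_kA\to A/\varpi_k^pA$ for every $k$. That is not the hypothesis; it uses the independence of the pseudo-uniformizer in the characterization of perfectoid rings. More seriously, ``dividing compatibly'' breaks down when $A$ has $\varpi$-torsion. The quotients $z_n$ with $x^{(n)}=\varpi_{n-1}z_n$ are determined only up to the annihilator of $\varpi_{n-1}$. The defects $z_{n+1}^p-z_n$ can be arbitrary elements of that annihilator (e.g.\ when $A$ has a perfect $\F_p$-algebra factor on which $\varpi$ acts by $0$), and you give no way to choose the $z_n$ compatibly. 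The standard route avoids this: a generator of $\ker(\theta\colon W(A^\flat)\to A)$ can be taken of the distinguished form $\xi=p-[\varpi^\flat]^p\gamma$. Hence $A/\varpi^pA\cong W(A^\flat)/(\xi,[\varpi^\flat]^p)=W(A^\flat)/(p,[\varpi^\flat]^p)=A^\flat/(\varpi^\flat)^pA^\flat$, induced by $y\mapsto\theta([y])=y^\sharp$.

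Similarly, your (2) rests on two facts: the $\varpi$-power torsion of $A$ is killed by $\varpi$ (hence by $p$, which is what makes $\sharp$ additive on it), and Frobenius is bijective on it. These are precisely the nontrivial content of [CS24, (2.1.2.8)], and they do not follow from (1) by any argument you indicate. Injectivity needs, e.g., reducedness of perfectoid rings, and surjectivity requires more.
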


\begin{proof}
The assertion (1) is \cite[(2.1.2.5)]{CS24} (essentially, the proof of \cite[Lemma 3.10 (i)]{BMS1}). 
The assertion (2) is \cite[(2.1.2.8)]{CS24}.
The assertion (3) is \cite[Main theorem 1.3 and 1.4]{EHS24}.
\end{proof}

%%%%%%%%%%%%%%%%%%%%%%%%%%%%%%%%%%%%%%%%%%%%%%%%%%%%%%%%%%%
\subsection{Perfectoid towers}

By a \emph{tower of rings} we mean an inductive system of rings $R_0\xr{t_0}R_1\xr{t_1}R_2\xr{t_2}\cdots$ indexed by $\N$, which we denote by $\{R_i,t_i\}_{i\geq 0}$ or simply $\{R_i\}_{i\geq 0}$.

\begin{definition}[{\cite[Definition 3.4, Definition 3.21]{INS25}}]
\label{def:perfectoid tower}
%Fix a prime number $p$. 
Let $R$ be a ring, and $I\subset R$ an ideal. A tower of rings $\{R_i,t_i\}_{i \ge 0}$ is called a \emph{purely inseparable tower arising from $(R,I)$} if it satisfies the following three axioms.
  \begin{enumalph}
  \item $R_0=R$ and $p\in I$.
  \item For any $i\geq 0$, the ring map $\ol{t_i} \colon R_i/IR_i \to R_{i+1}/IR_{i+1}$ induced by $t_i$ is injective.
  \item For any $i\geq 0$, the Frobenius endomorphism $F\colon R_{i+1}/IR_{i+1}\to R_{i+1}/IR_{i+1}$ factors as
  \[
  \begin{tikzcd}
  R_{i+1}/IR_{i+1} \rar["F"] \ar[rd,"F_i"',dashed] & R_{i+1}/IR_{i+1} \\
   & R_i/IR_i \uar["\ol{t_i}"']
  \end{tikzcd}
  \]
  We call the map $F_i$ (which is unique by the axiom (b)) \emph{the $i$-th Frobenius projection}.
  \end{enumalph}
The tower of rings $\{R_i,t_i\}_{i\geq 0}$ is called a \emph{perfectoid tower arising from $(R,I)$} if it satisfies in addition the following axioms.
  \begin{enumalph}[resume]
  \item For any $i\geq 0$, the $i$-th Frobenius projection $F_i\colon R_{i+1}/IR_{i+1}\to R_i/IR_i$ is surjective.
  \item For any $i\geq 0$, $R_i$ is $IR_i$-adically Zariskian (i.e., $IR_i$ is contained in the Jacobson radical of $R_i$).
  \item $I$ is a principal ideal, and there exists a principal ideal $I_1\subset R_1$ satisfying the following conditions.
    \begin{enumerate}
    \item[(f-1)] $I_1^p=IR_1$.
    \item[(f-2)] For any $i\geq 0$, $\Ker(F_i)=I_1(R_{i+1}/IR_{i+1})$.
    \end{enumerate}
  \item For any $i\geq 0$, $I(R_i)_{I\textrm{-}\mathrm{tor}}=0$, and there exists a (unique) bijection $(F_i)_{\mathrm{tor}}\colon (R_{i+1})_{I\textrm{-}\mathrm{tor}} \to (R_i)_{I\textrm{-}\mathrm{tor}}$ such that the following diagram commutes:
  \[
  \begin{tikzcd}
  (R_{i+1})_{I\textrm{-}\mathrm{tor}} \rar \dar["(F_i)_{\mathrm{tor}}"',dashed] & R_{i+1}/IR_{i+1} \dar["F_i"] \\
  (R_i)_{I\textrm{-}\mathrm{tor}} \rar & R_i/IR_i
  \end{tikzcd}
  \]
  \end{enumalph}
\end{definition}

Given a purely inseparable tower, we have its \emph{quasi-inverse perfection}.

\begin{definition}[{\cite[Definition 3.8]{INS25}}]
Let $\{R_i,t_i\}_{i\geq 0}$ be a purely inseparable tower arising from some pair $(R,I)$. 
\begin{enumerate}
    \item For any $j\geq 0$, we define the \textit{$j$-th inverse quasi-perfection of $\{R_i,t_i\}_{i\geq 0}$ associated to $(R,I)$} as the limit of topological rings
    \[
    R_j^{q.\textnormal{frep}} \coloneqq \varprojlim\big\{\cdots \to R_{j+i+1}/I R_{j+i+1}\xrightarrow{F_{j+i}} R_{j+i}/IR_{j+i}
    \xrightarrow{F_{i-1}} \cdots \xrightarrow{F_j} R_j/IR_j\big\},
    \]
where we endow each $R_i/IR_i$ with the discrete topology.
    \item For any $j \geq 0$, we define an injective ring map $t_j^{q.\textnormal{frep}} : R_j^{q.\textnormal{frep}} \hookrightarrow R_{j+1}^{q.\textnormal{frep}}$ by the rule
    \[
    t_j^{q.\textnormal{frep}} ((a_i)_{i \geq 0} ) \coloneqq (\overline{t}_{j+i}(a_i))_{i \geq 0}.
    \]
    \item We call the tower $\{ R_i^{q.\textnormal{frep}}, t_i^{q.\textnormal{frep}} \}_{i \geq 0}$ \textit{the inverse perfection of $\{R_i, t_i \}_{i \geq 0}$ associated to $(R,I)$}.
\end{enumerate}

If $\{ R_i, t_i \}_{i \geq 0}$ is a perfectoid tower, then the $j$-th inverse quasi-perfection is called \textit{the $j$-th small tilt} and denoted by $R_j^{s.\flat}$.
The ring map $t_j^{q.\textnormal{frep}}$ is also denoted by $t_j^{s.\flat}$.
Then the tower $\{ R_i^{s.\flat}, t_i^{s.\flat} \}_{i \geq 0}$ is called \textit{the tilt of $\{ R_i, t_i \}_{i \geq 0}$ associated to $(R,I)$}.
We also define the ideal $I^{s.\flat}$ of $R_0^{s.\flat}$ as the kernel of the $0$-th projection $R_0^{s.\flat}\to R/I$.
%:= \ker(\pi_i \circ \Phi^{(i)}_0 )$ of $R_i^{s.\flat}$ where $\pi_i : R_i/I_0R_i \to R_i/I_iR_i$ is the natural projection.
\end{definition}

We note that the tilt $\{R_i^{s.\flat},t_i^{s.\flat} \}_{i \geq 0}$ of a perfectoid tower $\{R_i, t_i\}_{i \geq 0}$ arising from some pair $(R,I)$ is also a perfectoid tower arising from $(R_0^{s.\flat}, I_0^{s.\flat})$ (see \cite[Corollary 3.52]{INS25}).
We give some examples.

\begin{example}\label{eg:RLRTower}
  \begin{enumerate}
  \item For a reduced $\F_p$-algebra $R$, the tower $R\xr{\mathrm{Frob}}R\xr{\mathrm{Frob}}R\xr{\mathrm{Frob}}\cdots$ consisting of the Frobenius endomorphisms is a perfectoid tower arising from $(R,(0))$. Conversely, a perfectoid tower arising from some pair $(R,(0))$ is of the form above up to isomorphism (\cite[Lemma 3.24]{INS25}).
  \item The following example is one of the most typical examples in mixed characteristic and used in the next section.
  Let $R$ be a $d$-dimensional unramified complete regular local ring with perfect residue field $k$ of characteristic $p$. Then we have $R\cong W(k)[[x_2,\ldots,x_d]]$. Consider the tower
  \[
  R_0\coloneqq R \hookrightarrow R_1\coloneqq R[p^{1/p},x_2^{1/p},\ldots,x_d^{1/p}]\hookrightarrow\cdots\hookrightarrow R_i\coloneqq R[p^{1/p^i},x_2^{1/p^i},\ldots,x_d^{1/p^i}]\hookrightarrow\cdots,
  \]
  where each transition map is the canonical injection. Then $R_i/pR_i \cong k[[x_2,\ldots,x_d]][x_2^{1/p^i},\ldots,x_d^{1/p^i}]$ for every $i\geq 0$. We deduce from this that the tower $\{R_i\}_{i\geq 0}$ is a perfectoid tower arising from $(R,(p))$. In this case, the Frobenius projection $F_i\colon R_{i+1}/pR_{i+1}\to R_i/pR_i$ is given as the $p$-th power map. Its tilt $R_0^{s.\flat}\to R_1^{s.\flat}\to R_2^{s.\flat}\to\cdots$ is given by
  \[
  k[[p^\flat,x_2^\flat,\ldots,x_d^\flat]] \hookrightarrow k[[(p^\flat)^{1/p},(x_2^\flat)^{1/p},\ldots,(x_d^\flat)^{1/p}]] \hookrightarrow k[[(p^\flat)^{1/p^2},(x_2^\flat)^{1/p^2},\ldots,(x_d^\flat)^{1/p^2}]] \hookrightarrow \cdots 
  \]
  where $p^\flat,x_2^\flat,\ldots,x_d^\flat$ are variables corresponding to $p,x_2,\ldots,x_d$. By a similar construction, we obtain perfectoid towers arising from local log-regular rings (\cite[\S3F]{INS25}).
  \item Ishizuka \cite{Ish24} gives a construction of perfectoid towers generated by Frobenius lifts. Due to this fact, one can obtain perfectoid towers arising from non-normal or non-Cohen--Macaulay rings. For other examples including ones that have $p$-torsion elements, see \cite[\S5]{IS25}.
  \end{enumerate}
\end{example}

The following three lemmas give useful sufficient conditions to verify axioms (b), (e), and (f) in \cref{def:perfectoid tower}.

\begin{lemma}[{cf.\ the proof of \cite[Lemma 4.11]{NS18}}]
\label{lem:injmod}
Let $R\hookrightarrow S$ be an injective map of rings, and $f\in R$ a non-zero-divisor. If $R$ is integrally closed in $R[\frac{1}{f}]$ and $S$ is integral over $R$, then the induced map $R/fR\to S/fS$ is injective.
\end{lemma}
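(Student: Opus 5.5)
The plan is to reduce injectivity of $R/fR\to S/fS$ to the statement $fS\cap R=fR$ and prove the latter by integrality. Let $r\in R$ with $r\in fS$, say $r=fs$ for some $s\in S$. We must show that $r\in fR$. Consider the element $x\coloneqq r/f\in R[\frac{1}{f}]$. The whole argument consists in showing that $x$ is integral over $R$; integral closedness of $R$ in $R[\frac{1}{f}]$ then gives $x\in R$, hence $r=fx\in fR$.

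For integrality, we use that $s\in S$ is integral over $R$, so it satisfies a monic equation
\[
s^n+a_{1}s^{n-1}+\cdots+a_n=0,\qquad a_k\in R.
\]
We would like to transport this to $x$. The issue is that $x$ lives in $R[\frac{1}{f}]$ while $s$ lives in $S$, and we need a common ring in which $fx=fs$ lets us identify them. To do this, work in $S[\frac{1}{f}]$. Since $R\hookrightarrow S$ is injective and $f$ is a non-zero-divisor on $R$, the localization $R[\frac{1}{f}]\to S[\frac{1}{f}]$ makes sense.

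The subtle point is that $f$ need not be a non-zero-divisor on $S$, so $x\mapsto s/1$ need not be injective, and the map $R[\frac{1}{f}]\to S[\frac{1}{f}]$ may fail to be injective. To avoid this, argue directly in $R$. Multiply the monic equation by $f^n$ to get
\[
(fs)^n+a_1f(fs)^{n-1}+\cdots+a_nf^n=0
\]
in $S$, that is,
\[
r^n+a_1fr^{n-1}+\cdots+a_nf^n=0.
\]
All terms now lie in $R$, and since $R\to S$ is injective, this equation holds in $R$. Dividing by $f^n$ in $R[\frac{1}{f}]$, which is legitimate because $f$ is a non-zero-divisor on $R$ so that $R\subseteq R[\frac{1}{f}]$, we get
\[
x^n+a_1x^{n-1}+\cdots+a_n=0.
\]
Thus $x$ is integral over $R$, and the argument above concludes.

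The main obstacle is exactly this potential non-injectivity of $R[\frac{1}{f}]\to S[\frac{1}{f}]$. The trick of clearing denominators by multiplying by $f^n$, and then pulling the relation back along the injection $R\hookrightarrow S$, resolves it. Beyond that, only routine checks remain. Note that integrality in the sense of \cref{def:integral} is equivalent to satisfying a monic equation, by the standard argument.
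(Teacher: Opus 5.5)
Your proof is correct and follows the same route as the paper: reduce to $fS\cap R=fR$, show that $r/f\in R[\frac{1}{f}]$ is integral over $R$, and invoke integral closedness. Your step of multiplying the monic equation of $s$ by $f^n$ and pulling it back along $R\hookrightarrow S$ makes rigorous the paper's identification ``$s=r/f\in R[\frac{1}{f}]$'', which tacitly places $s$ and $r/f$ in a common ring; that identification is harmless when $S$ is $f$-torsion free, as in the later applications.
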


\begin{proof}
We have to show that $fS\cap R = fR$. Pick $r\in fS\cap R$. We can write $r=fs$ for some $s\in S$. Then $s=r/f\in R[\frac{1}{f}]$, which is integral over $R$. Hence $s\in R$, and so $r=fs\in fR$.
\end{proof}

For a ring $A$, let $\rad(A)$ denote its Jacobson radical.

\begin{lemma}
\label{lem:e}
Let $f\colon R\to S$ be an integral map of rings, and $I\subseteq R$ an ideal.
  \begin{enumerate}
  \item $\rad(R)\subset\rad(S)\cap R$ with equality if $f$ is injective.
  \item If $R$ is $I$-adically Zariskian, then $S$ is $IS$-adically Zariskian.
  \end{enumerate}
\end{lemma}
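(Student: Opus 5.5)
For (1) I would use the standard characterization of the Jacobson radical: $x\in\rad(A)$ if and only if $1+ax$ is a unit in $A$ for every $a\in A$, or equivalently, $x$ lies in every maximal ideal of $A$. For (2) I would reduce to (1), since being $I$-adically Zariskian only asks that $I\subseteq\rad(R)$.

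For the inclusion $\rad(R)\subset \rad(S)\cap R$ in (1) (read via $f$, i.e.\ as $f(\rad(R))\subseteq\rad(S)$, and $\rad(R)\subseteq f^{-1}(\rad(S))$), I would argue with maximal ideals. Let $\mathfrak n\subset S$ be maximal. Since $R/f^{-1}(\mathfrak n)\hookrightarrow S/\mathfrak n$ is an integral extension and $S/\mathfrak n$ is a field, the domain $R/f^{-1}(\mathfrak n)$ is a field. So $f^{-1}(\mathfrak n)$ is maximal in $R$, and it contains $\rad(R)$. Hence $f(\rad(R))\subseteq\mathfrak n$ for every maximal $\mathfrak n$, which gives $f(\rad(R))\subseteq\rad(S)$.

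For equality when $f$ is injective, I view $R\subseteq S$. Take $x\in\rad(S)\cap R$ and a maximal ideal $\mathfrak m$ of $R$. By lying over for the integral extension $R\subseteq S$, there is a prime $\mathfrak q$ of $S$ with $\mathfrak q\cap R=\mathfrak m$. Incomparability/going up shows $\mathfrak q$ is maximal, since $S/\mathfrak q$ is integral over the field $R/\mathfrak m$ and is a domain. Then $x\in\mathfrak q\cap R=\mathfrak m$. As this holds for every $\mathfrak m$, we get $x\in\rad(R)$.

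For (2), suppose $I\subseteq\rad(R)$. By (1), $f(I)\subseteq\rad(S)$. Since $\rad(S)$ is an ideal, $IS\subseteq\rad(S)$, which means $S$ is $IS$-adically Zariskian. No step is a real obstacle. The only points needing care are the lying-over argument in the equality case and the convention that ``$\rad(S)\cap R$'' is taken via $f$ when $f$ is not injective.
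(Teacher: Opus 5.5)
Your proof is correct, and at its core it is the same argument as the paper's, but you organize the non-injective case differently.

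\textbf{How the paper argues.} For the injective case of (1), the paper cites Bourbaki (Commutative Algebra, Ch.~V, \S2.1, Cor.~3), which states $\rad(R)=\rad(S)\cap R$ for an integral extension. For a general $f$, it factors $R\twoheadrightarrow f(R)\subseteq S$ and applies the injective case to the integral extension $f(R)\subseteq S$. Part (2) is then deduced exactly as you do.

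\textbf{How your argument differs.} You prove everything directly from maximal ideals. The contraction of a maximal ideal along an integral map is maximal, which gives the inclusion for arbitrary $f$. Lying over, together with the fact that a domain integral over a field is a field, gives equality in the injective case. This is essentially the content of the Bourbaki corollary.

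\textbf{What your route buys.} Your handling of the non-injective case is cleaner. The paper's reduction asserts $f(\rad R)=\rad f(R)$ for the surjection $R\to f(R)$. In general this is only an inclusion: for example, $\mathbb{Z}\to\mathbb{Z}/4\mathbb{Z}$ sends $\rad(\mathbb{Z})=0$ to $0$, while $\rad(\mathbb{Z}/4\mathbb{Z})=2\mathbb{Z}/4\mathbb{Z}\neq 0$. The inclusion is all the lemma needs, so the paper's conclusion still holds. Your argument never passes through $f(R)$ and avoids this point entirely. It also makes explicit that ``$\rad(S)\cap R$'' should be read as $f^{-1}(\rad(S))$, which the paper leaves implicit.
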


\begin{proof}
%For a ring $A$, let $\rad(A)$ denote its Jacobson radical. The extension $f(R)\subseteq S$ is integral, and thus $\rad (f(R)) = \rad(S)\cap f(R)$ (). But $\rad(f(R)) = f(\rad(R))$, and so we conclude that $\rad(R)\subseteq \rad(S)\cap R$, which implies the assertion.
(1) If $f$ is injective, then the assertion follows from \cite[Chapter V, \S2.1, Corollary 3]{BouAC}.
In the general case, we only have to notice that $f(\rad R) = \rad f(R)$, which follows from the surjectivity of $f$.
(2) is immediate from (1).
\end{proof}

\begin{lemma}
\label{lem:f-2}
Let $\{R_i,t_i\}_{i\geq 0}$ be a purely inseparable tower arising from some pair $(R,I_0)$. Assume that $I_0$ is a principal ideal, and $R_1$ contains a principal ideal $I_1=(f_1)$ that satisfies (f-1). Furthermore, assume that the image of $f_1$ in $R_i$ is a non-zero-divisor. Then $I_1$ satisfies (f-2) if and only if $R_{i+1}$ is $p$-root closed in $R_{i+1}[\frac{1}{f_1}]$ for every $i\geq 0$.
\end{lemma}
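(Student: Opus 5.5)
The plan is to make the kernel of $F_i$ explicit and then translate (f-2) into an elementwise divisibility statement about $f_1$. Throughout, write $I_0=(f_0)$ and $I_1=(f_1)$. By (f-1) we have $f_1^pR_1=f_0R_1$, hence $f_1^pR_{i+1}=f_0R_{i+1}=I_0R_{i+1}$ for every $i\geq 0$. In particular $I_0R_{i+1}\subseteq f_1R_{i+1}$.

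\emph{Step 1: describe $\Ker(F_i)$.} By axiom (c), $\ol{t_i}\circ F_i$ is the Frobenius of $R_{i+1}/I_0R_{i+1}$. By axiom (b), $\ol{t_i}$ is injective. Hence $\Ker(F_i)$ is the kernel of Frobenius on $R_{i+1}/I_0R_{i+1}$. Concretely, the class of $x\in R_{i+1}$ lies in $\Ker(F_i)$ if and only if $x^p\in I_0R_{i+1}=f_1^pR_{i+1}$. This already gives $I_1(R_{i+1}/I_0R_{i+1})\subseteq\Ker(F_i)$. Moreover, since $I_0R_{i+1}\subseteq f_1R_{i+1}$, condition (f-2) for the index $i$ is equivalent to the following statement:
\[
(\ast)\qquad x\in R_{i+1},\ x^p\in f_1^pR_{i+1}\ \Longrightarrow\ x\in f_1R_{i+1}.
\]

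\emph{Step 2: $p$-root closedness implies $(\ast)$.} Suppose $x^p=f_1^pr$ with $r\in R_{i+1}$. Since $f_1$ is a non-zero-divisor, $R_{i+1}\subseteq R_{i+1}[\frac{1}{f_1}]$. The element $y=x/f_1$ of this localization satisfies $y^p=r\in R_{i+1}$. By $p$-root closedness, $y\in R_{i+1}$, so $x=f_1y\in f_1R_{i+1}$.

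\emph{Step 3: $(\ast)$ implies $p$-root closedness.} Let $y\in R_{i+1}[\frac{1}{f_1}]$ with $y^p\in R_{i+1}$, and write $y=x/f_1^n$ with $x\in R_{i+1}$. We argue by induction on $n$; the case $n=0$ is trivial. From $y^p\in R_{i+1}$ and the fact that $f_1$ is a non-zero-divisor, we get $x^p\in f_1^{pn}R_{i+1}\subseteq f_1^pR_{i+1}$. Applying $(\ast)$ gives $x=f_1x'$. Cancelling $f_1^p$, again using that $f_1$ is a non-zero-divisor, yields $x'^p\in f_1^{p(n-1)}R_{i+1}$. Thus $y=x'/f_1^{n-1}$ with $y^p\in R_{i+1}$, and the induction hypothesis gives $y\in R_{i+1}$.

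Since the equivalence holds index by index, (f-2) for all $i\geq 0$ is equivalent to $p$-root closedness of $R_{i+1}$ in $R_{i+1}[\frac{1}{f_1}]$ for all $i\geq 0$. There is no serious obstacle. The only care needed is the bookkeeping in Step 1: one must use $f_1^pR_{i+1}=I_0R_{i+1}$ and $I_0R_{i+1}\subseteq f_1R_{i+1}$ so that (f-2) reduces exactly to $(\ast)$. One must also invoke the non-zero-divisor hypothesis on $f_1$ whenever cancelling powers of $f_1$ in Step 3.
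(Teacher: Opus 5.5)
Your proof is correct and follows the same route as the paper: you use axioms (b), (c) and $f_1^pR_{i+1}=I_0R_{i+1}$ to reduce (f-2) to the statement that the kernel of Frobenius on $R_{i+1}/f_1^pR_{i+1}$ is generated by $f_1$, and then compare this with $p$-root closedness. The only difference is that the paper quotes this last equivalence from \cite[(2.1.7.1)]{CS24}, whereas your Steps 2 and 3 prove it directly by an elementary argument that only needs $f_1$ to be a non-zero-divisor and never uses $p\in f_1^pR_{i+1}$.
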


\begin{proof}
Since (f-2) is equivalent to say that the kernel of the Frobenius endomorphism on $R_{i+1}/I_0R_{i+1} = R_{i+1}/f_1^pR_{i+1}$ is generated by $f_1$, the assertion follows from the general fact: if $A$ is a ring and $t\in A$ is a non-zero-divisor with $p\in t^pA$, then $A$ is $p$-root closed in $A[\frac{1}{t}]$ if and only if the Frobenius endomorphism on $A/t^pA$ is generated by $t$ (\cite[(2.1.7.1)]{CS24}). 
\end{proof}

%Finally, let us include a few facts on perfectoid towers.

\begin{notation}
\label{ntn:tower}
Throughout this paper, we use the following notations for towers of rings.
  \begin{enumerate}
  \item Consider a tower of rings $\{R_i,t_i\}_{i\geq 0}$ satisfying axiom (a) for some pair $(R,I)$.
    \begin{itemize}
    \item We set $R_\infty \coloneqq\varinjlim \{ R_0 \xrightarrow{t_0}R_1\xrightarrow{t_1} R_2 \xrightarrow{t_2} \cdots\}$.
    %\item For any $i \geq 0$, we denote the canonical map $R_i \to R_\infty$ by $\vil_{j \geq i} t_{j}$.
    \item Let $\wh{R_\infty}$ denote the $I$-adically completion of $R_\infty$. This is a perfectoid ring if $\{R_i,t_i\}_{i\geq 0}$ is a perfectoid tower (\cite[Corollary 3.52]{INS25}).
    \end{itemize}
  \item Consider a purely inseparable tower $\{R_i,t_i\}_{i\geq 0}$ arising from some pair $(R,I)$.
    \begin{itemize}
    \item For any $j\geq 0$ and for any $m\geq 0$, we denote by $\Phi^{(j)}_m \colon R_j^{q.\textnormal{frep}} \to R_{j+m}/IR_{j+m}$ the $m$-th projection map. If $m=0$, we simply write $\Phi^{(j)}=\Phi^{(j)}_0$.
    %We use the same notation if $\{R_i,t_i\}_{i \geq 0}$ is a perfectoid tower.
    \item We set $R_\infty^{q.\textnormal{frep}}\coloneqq \varinjlim \{ R_0^{q.\textnormal{frep}} \xrightarrow{t_0^{q.\textnormal{frep}}} R_1^{q.\textnormal{frep}} \xrightarrow{t_1^{q.\textnormal{frep}}} R_2^{q.\textnormal{frep}} \xrightarrow{t_2^{q.\textnormal{frep}}} \cdots \}$.
    \end{itemize}
  \item Consider a perfectoid tower $\{R_i,t_i\}_{i\geq 0}$ arising from some pair $(R,I)$.
    \begin{itemize}
    \item We set $R_\infty^{s.\flat}\coloneqq \varinjlim \{ R_0^{s.\flat} \xrightarrow{t_0^{s.\flat}} R_1^{s.\flat} \xrightarrow{t_1^{s.\flat}} R_2^{s.\flat} \xrightarrow{t_2^{s.\flat}} \cdots \}$.
    \item We set $R_\infty^\flat \coloneqq (R_\infty)^\flat \cong (\wh{R_\infty})^\flat \cong\footnote{This isomorphism follows from \cite[Lemma 3.55]{INS25}.} \wh{R_\infty^{s.\flat}}$ where $\wh{R_\infty^{s.\flat}}$ is the $I^{s.\flat}$-adic completion of $R_\infty^{s.\flat}$.
    \end{itemize}
  \end{enumerate}
\end{notation}

\begin{lemma}
\label{lem:torisom}
Let $\{R_i,t_i\}_{i\geq 0}$ be a perfectoid tower arising from some pair $(R,I_0)$. Then we have the isomorphisms of (possibly) non-unital rings
\begin{equation}
\label{eq:tor2}
\begin{tikzcd}
(R_\infty^{s.\flat})_{\textrm{$I_0^{s.\flat}$-}\mathrm{tor}} \rar["\cong"] \dar["\cong"'] & 
(R_\infty^\flat)_{\textrm{$I_0^{s.\flat}$-}\mathrm{tor}} \\
(R_\infty)_{\textrm{$I_0$-}\mathrm{tor}} \rar["\cong"] & 
(\wh{R_\infty})_{\textrm{$I_0$-}\mathrm{tor}}
\end{tikzcd}
\end{equation}
\end{lemma}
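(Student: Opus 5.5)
We establish the four isomorphisms in \eqref{eq:tor2} separately. After that the diagram can be taken as the definition of the remaining identifications, or checked to commute using the fact that all maps are built from the same projections.

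\textbf{Bottom arrow.} The map $(R_\infty)_{I_0\text{-tor}}\to(\wh{R_\infty})_{I_0\text{-tor}}$ comes from completion. By axiom (g) and passage to the colimit, $I_0$ kills $(R_{\infty})_{I_0\text{-tor}}$, since any torsion element lives in some $R_i$. So $(R_\infty)_{I_0\text{-tor}}$ is a bounded torsion submodule $T$. Write $f_0$ for a generator of $I_0$. The exact sequence $0\to T\to R_\infty\to R_\infty/T\to 0$ has $R_\infty/T$ $f_0$-torsion free. $I_0$-adic completion preserves it, because with bounded torsion the Artin--Rees-type argument works and $T$ is already complete, being killed by $f_0$. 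By \cref{cor:BLlemma}(1) the completion of $R_\infty/T$ is $f_0$-torsion free. Hence the torsion of $\wh{R_\infty}$ is exactly $T$.

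\textbf{Left arrow.} The map $(R_\infty^{s.\flat})_{I_0^{s.\flat}\text{-tor}}\to(R_\infty)_{I_0\text{-tor}}$ is built level by level. An element of $(R_j^{s.\flat})_{\text{tor}}$ is a compatible sequence $(a_m)$ with $a_m\in R_{j+m}/I_0$. I expect each $a_m$ to lie in the image of $(R_{j+m})_{\text{tor}}$, which injects into $R_{j+m}/I_0$ because $I_0\cdot\text{tor}=0$ and the torsion is $f_0$-saturated. The bijections $(F_i)_{\text{tor}}$ of axiom (g) then identify $(R_j^{s.\flat})_{\text{tor}}$ with $\varprojlim_{(F_i)_{\text{tor}}}(R_{j+m})_{\text{tor}}\cong(R_j)_{\text{tor}}$ via the $0$-th component. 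This is essentially the content of \cite{INS25} (their treatment of torsion in small tilts), which I would cite or reprove. Checking compatibility with $t_j$ and $t_j^{s.\flat}$ and passing to colimits gives the left isomorphism. The map is additive and multiplicative because it is the restriction of a ring map $\Phi^{(j)}$ composed with an inverse of an injection.

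\textbf{Top arrow.} For $(R_\infty^{s.\flat})_{\text{tor}}\to(R_\infty^\flat)_{\text{tor}}$, I would use $R_\infty^\flat\cong\wh{R_\infty^{s.\flat}}$ from \cref{ntn:tower}. I would then repeat the argument of the bottom arrow for the tilted perfectoid tower, which is a perfectoid tower arising from $(R_0^{s.\flat},I_0^{s.\flat})$ by \cite[Corollary 3.52]{INS25} and so satisfies axiom (g).

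\textbf{Right side and main obstacle.} Consistency with \cref{thm:sharp}(2) on the right side would be a remark rather than needed. The main obstacle is the bottom/top completion step: showing that bounded torsion survives $I_0$-adic completion unchanged and that no new torsion appears. I would handle it by splitting off $T$ as above. Exactness of completion on $0\to T\to R_\infty\to R_\infty/T\to0$ follows because the induced filtration on $T$ is $T\cap f_0^nR_\infty$. This vanishes for $n\ge1$: if $f_0^n x\in T$, then $x$ is torsion, so $f_0x=0$.
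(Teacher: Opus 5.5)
Your proposal is correct and follows the paper's proof. The vertical isomorphism is the torsion comparison of \cite[Theorem 3.35 (2)]{INS25}. The bottom arrow comes from $I_0(R_\infty)_{\textrm{$I_0$-}\mathrm{tor}}=0$ (axiom (g)) together with the fact that bounded torsion is unchanged under completion, and the top arrow is the same argument applied to the tilted perfectoid tower. The one difference is that the paper cites \cite[Lemma 3.16]{INS25} for the completion step, whereas you prove it directly from the exact sequence $0\to T\to R_\infty\to R_\infty/T\to 0$ (using $T\cap f_0^nR_\infty=0$) and \cref{cor:BLlemma}(1); that argument is valid.
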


\begin{proof}
The vertical isomorphism is obtained by \cite[Theorem 3.35 (2)]{INS25}. Moreover, it follows from axiom (g) that $I_0(R_\infty)_{\textrm{$I_0$-}\mathrm{tor}}=(0)$. Then the restriction of the canonical homomorphism $R_\infty\to\wh{R_\infty}$ induces an isomorphism $(R_\infty)_{\textrm{$I_0$-}\mathrm{tor}} \xr{\cong} (\wh{R_\infty})_{\textrm{$I_0$-}\mathrm{tor}}$ by \cite[Lemma 3.16]{INS25}. Since $\{R_i^{s.\flat},t_i^{s.\flat}\}_{i\geq 0}$ is a perfectoid tower arising from $(R^{s.\flat},I_0^{s.\flat})$ by \cite[Proposition 3.41]{INS25}, we can apply the same argument to obtain the isomorphism $(R_\infty^{s.\flat})_{\textrm{$I_0^{s.\flat}$-}\mathrm{tor}} \to (R_\infty^\flat)_{\textrm{$I_0^{s.\flat}$-}\mathrm{tor}}$.
\end{proof}

\begin{remark}
The composition $(R_\infty^\flat)_{\textrm{$I_0^{s.\flat}$-}\mathrm{tor}} \xleftarrow{\cong}
(R_\infty^{s.\flat})_{\textrm{$I_0^{s.\flat}$-}\mathrm{tor}} \xr{\cong}(R_\infty)_{\textrm{$I_0$-}\mathrm{tor}} \xr{\cong} (\wh{R_\infty})_{\textrm{$I_0$-}\mathrm{tor}}$ coincides with the isomorphism induced by $\sharp\colon R_\infty^\flat\to \wh{R_\infty}$ (\cref{thm:sharp} (2)).
\end{remark}

\begin{proposition-definition}
Let $\{R_i,t_i\}_{i\geq 0}$ be a perfectoid tower arising from some pair $(R,I_0)$. Then the following conditions are equivalent.
  \begin{enumerate}
  \item $R_0$ is $I_0$-torsion free.
  \item $R_i$ is $I_0$-torsion free for any $i\geq 0$.
  \item $R_\infty$ is $I_0$-torsion free.
  \item $\wh{R_\infty}$ is $I_0$-torsion free.
  \item $R_0^{s.\flat}$ is $I_0^{s.\flat}$-torsion free.
  \item $R_i^{s.\flat}$ is $I_0^{s.\flat}$-torsion free for any $i\geq 0$.
  \item $R_\infty^{s.\flat}$ is $I_0^{s.\flat}$-torsion free.
  \item $R_\infty^\flat$ is $I_0^{s.\flat}$-torsion free.
  \end{enumerate}
If one of (and hence all) these conditions are satisfied, then we say that the perfectoid tower $\{R_i,t_i\}_{i\geq 0}$ is \emph{$I_0$-torsion free}.
\end{proposition-definition}

\begin{proof}
%By \cite[Theorem 3.35 (2)]{INS25} and \cref{lem:torisom}, we have the equivalences
%\[
%\begin{tikzcd}
%\text{(1)} \rar[Leftrightarrow] \dar[Leftrightarrow] & \text{(2)} \dar[Leftrightarrow] & \text{(3)} \rar[Leftrightarrow] \dar[Leftrightarrow] & \text{(4)} \dar[Leftrightarrow] \\
%\text{(5)} \rar[Leftrightarrow] & \text{(6)} & \text{(7)} \rar[Leftrightarrow] & \text{(8)}.
%\end{tikzcd}
%\]
%
%``(2) $\Rightarrow$ (3)'' and ``(6) $\Rightarrow$ (7)''
%(6) $\Rightarrow$ (7): This implication follows from the exactness of colimits.
%
%(7) $\Rightarrow$ (6): This follows from the fact that every $R_i^{s.\flat}\to R_{i+1}^{s.\flat}$ is injective.
%
By \cite[Theorem 3.35 (2)]{INS25}, the equivalence ``(1) $\Longleftrightarrow$ (5)'' (or ``(2) $\Longleftrightarrow$ (6)'') holds. On the other hand, it follows from \cite[Proposition 3.41]{INS25} that the tilt $\{R_i^{s.\flat},t_i^{s.\flat}\}_{i\geq 0}$ is a perfectoid tower arising from $(R^{s.\flat},I_0^{s.\flat})$. Hence it suffices to show the equivalences from (1) to (4) after replacing $\{R_i,t_i\}_{i\geq 0}$ by $\{R_i^{s.\flat},t_i^{s.\flat}\}_{i\geq 0}$. In particular, we may assume that the maps $t_i$ are injective.

(1) $\Leftrightarrow$ (2): This follows immediately from axiom (g).

(2) $\Rightarrow$ (3): This follows from the exactness of colimits.

(3) $\Rightarrow$ (2): This is clear because the maps $t_i$ are injective.

(3) $\Leftrightarrow$ (4): This follows from \cref{lem:torisom}.
\end{proof}

The following lemma is an application of the results in the previous section:

\begin{lemma}
\label{lem:citower}
Let $\{R_i,t_i\}_{i\geq 0}$ be a tower of rings satisfying (a) and (b) for some pair $(R,I)$, where $I$ is generated by an element $f$. Assume that $R_i$ is $I$-torsion free for every $i\in\N\cup\{\infty\}$.
  \begin{enumerate}
  \item Let $i\geq 0$. If $R_{i+1}$ is integrally closed (resp.\ completely integrally closed) in $R_{i+1}[\frac{1}{f}]$, then $R_i$ is integrally closed (resp.\ completely integrally closed) in $R_i[\frac{1}{f}]$.
  \item The following conditions are equivalent.
    \begin{enumerate}
    \item $R_i$ is completely integrally closed in $R_i[\frac{1}{f}]$ for every $i\geq 0$.
    \item $R_\infty$ is completely integrally closed in $R_\infty[\frac{1}{f}]$.
    \item $\wh{R_\infty}$ is completely integrally closed in $\wh{R_\infty}[\frac{1}{f}]$.
    \end{enumerate}
  \end{enumerate}
\end{lemma}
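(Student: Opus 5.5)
The plan is to reduce both parts to the cartesian-diagram machinery of \cref{s:2}, using axiom (b) as the input for \cref{prop:cartesian2}.

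\textbf{Part (1).} Since $R_i$ and $R_{i+1}$ are $f$-torsion free and axiom (b) says $R_i/fR_i\to R_{i+1}/fR_{i+1}$ is injective, \cref{prop:cartesian2} shows that the square with $R_i\to R_{i+1}$ on top and $R_i[\frac{1}{f}]\to R_{i+1}[\frac{1}{f}]$ on the bottom is cartesian. Here the vertical maps are injective by torsion freeness. Then \cref{lem:ci cartesian} transfers integral closedness (resp.\ complete integral closedness) of $R_{i+1}$ in $R_{i+1}[\frac{1}{f}]$ down to $R_i$ in $R_i[\frac{1}{f}]$.

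\textbf{Part (2).} I would apply \cref{prop:ci cartesian3} with $A_j=R_j$, $A=R_\infty$, starting at index $0$ with the element $f\in R_0$. The hypotheses there are:
\begin{itemize}
\item $f$-torsion freeness of every $R_j$, which is assumed;
\item that each square $R_j\to R_\infty$ over $R_j[\frac{1}{f}]\to R_\infty[\frac{1}{f}]$ is cartesian.
\end{itemize}
By \cref{prop:cartesian2}, and using that $R_\infty$ is $f$-torsion free (assumed for $i=\infty$), the second hypothesis reduces to injectivity of $R_j/fR_j\to R_\infty/fR_\infty$.

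To get this injectivity, note that $R_\infty/fR_\infty\cong\varinjlim_k R_k/fR_k$, since colimits commute with the cokernel of multiplication by $f$. Each transition map $R_k/fR_k\to R_{k+1}/fR_{k+1}$ is injective by axiom (b). A filtered colimit of injections has injective structure maps, so $R_j/fR_j\to R_\infty/fR_\infty$ is injective. With these hypotheses verified, \cref{prop:ci cartesian3} gives (a)$\Leftrightarrow$(b)$\Leftrightarrow$(c) directly; its (2)$\Leftrightarrow$(3) part uses \cref{cor:BLlemma}, which needs $f$ to be a non-zero-divisor on $R_\infty$, and that is exactly the torsion-free hypothesis.

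\textbf{Expected obstacle.} The only real point to check is the cartesianness of the squares into $R_\infty$, namely the filtered-colimit injectivity argument above. Everything else is a direct citation of the results of \cref{s:2}.
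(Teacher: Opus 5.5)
Your proof is correct and follows the paper's route: part (1) via the cartesian criterion (\cref{prop:cartesian2}, the ring version of \cref{lem:cartesian}) combined with \cref{lem:ci cartesian}, and part (2) via \cref{prop:ci cartesian3}. You also spell out a step the paper leaves implicit: the squares into $R_\infty$ are cartesian because $R_j/fR_j\to R_\infty/fR_\infty$ is injective, being the structure map of a filtered colimit of injections.
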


\begin{proof}
(1) follows from \cref{lem:cartesian,lem:ci cartesian}.
(2) follows from \cref{prop:ci cartesian3}.
\end{proof}

%%%%%%%%%%%%%%%%%%%%%%%%%%%%%%%%%%%%%%%%%%%%%%%%%%%%%%%%%%%
\subsection{Monoidal maps for perfectoid towers}

Let $\{R_i,t_i\}_{i\geq 0}$ be a perfectoid tower arising from some pair $(R,I)$. Then the tilt $R_\infty^\flat$ of the $p$-adically complete ring $\wh{R_\infty}$ is isomorphic to the multiplicative monoid $\varprojlim\limits_{x\mapsto x^p}\wh{R_\infty}$ (\cref{lem:monoidal}). Here we will refine this result for small tilts $R_i^{s.\flat}$. The essential point relies on the following lemma.

\begin{lemma}
\label{lem:inj}
Let $\{R_i,t_i\}_{i\geq 0}$ be a tower of rings satisfying axioms (a) and (b) for some pair $(R,I)$. Assume that the ideal $IR_\infty \subseteq R_\infty$ is finitely generated (e.g., axiom (f) is satisfied). Then for any $i\geq 0$, the ring map
\[
R_i/IR_i \to \wh{R_\infty}/I\wh{R_\infty}
\]
induced by the composition $R_i \to R_\infty \to \wh{R_\infty}$ is injective.
\end{lemma}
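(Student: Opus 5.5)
We will factor the map as
\[
R_i/IR_i \to R_\infty/IR_\infty \to \wh{R_\infty}/I\wh{R_\infty}
\]
and prove that each factor is injective; in fact the second factor is an isomorphism.

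For the first map, recall that $R_\infty/IR_\infty \cong \varinjlim_j R_j/IR_j$, because quotienting by $I$ (tensoring with $R/I$) commutes with filtered colimits. By axiom (b), every transition map $\ol{t_j}\colon R_j/IR_j\to R_{j+1}/IR_{j+1}$ is injective. A filtered colimit of a system with injective transition maps receives an injective map from each term. Hence $R_i/IR_i\to R_\infty/IR_\infty$ is injective. This step does not need any torsion-freeness, which is why the lemma only assumes (a) and (b).

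For the second map, we use the standard fact about adic completion with respect to a finitely generated ideal. Let $J=IR_\infty$, which is finitely generated by hypothesis; when axiom (f) holds it is principal, generated by the image of a generator of $I$. For a finitely generated ideal $J$ of a ring $B$, the completion $\wh{B}=\varprojlim_n B/J^n$ is $J\wh{B}$-adically complete. Moreover $\wh{B}/J^n\wh{B}\cong B/J^n$ for all $n$ (see \cite[Chapter 0, \S7.2]{FKI} or Stacks Project, Tag 05GG). Taking $n=1$ gives $\wh{R_\infty}/I\wh{R_\infty}\cong R_\infty/IR_\infty$, compatibly with the canonical map. Here one should note that $I\wh{R_\infty}=J\wh{R_\infty}$, since $J$ is generated by the image of $I$.

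The only real point is the second step. Without finite generation, $J\wh{B}$ might fail to equal the kernel of $\wh{B}\to B/J$, and this is exactly where the hypothesis is used. If one wants to avoid citing the general statement in the principal case $J=(f)$, a direct argument works. The kernel of the projection $\wh{B}\to B/fB$ consists of compatible sequences $(b_n)$ with $b_1=0$. Each $b_n$ lies in $fB/f^nB$, and one can lift to compatible $c_n\in B/f^{n-1}B$ with $b_n=fc_n$. Then $c=(c_n)\in\wh{B}$ satisfies $fc=(b_n)$. Compatibility of the lifts $c_n$ is arranged by choosing them inductively, using that any two choices differ by an element annihilated by $f$ modulo $f^{n-1}$. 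Passing to the inverse limit requires the usual Mittag-Leffler care, which is where citing the general result is cleaner. Composing the two steps gives the injectivity of $R_i/IR_i\to\wh{R_\infty}/I\wh{R_\infty}$.
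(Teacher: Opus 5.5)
Your proof is correct and matches the paper's argument: you factor through $R_\infty/IR_\infty$, get injectivity of the first map from axiom (b) and the filtered colimit, and get that the second map is an isomorphism because completion is along the finitely generated ideal $IR_\infty$. Your optional direct argument in the principal case is also fine, and it needs no Mittag-Leffler input: the inductive choice of the $c_n$ already produces a compatible sequence.
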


\begin{proof}
$R_i/IR_i \to \wh{R_\infty}/I\wh{R_\infty}$ factors as
\[
R_i/IR_i \to R_\infty/IR_\infty \to \wh{R_\infty}/I\wh{R_\infty}.
\]
The first arrow is induced from the injections $R_i/IR_i \hookrightarrow R_j/IR_j$ ($j\geq i$), hence is injective. The second arrow is an isomorphism by the assumption that $IR_\infty$ is finitely generated.
\end{proof}

In particular, $R_i/IR_i$ is isomorphic to its image in $\wh{R_\infty}/I\wh{R_\infty}$. We will use the following notation:

\begin{notation}
For a tower of rings $\{R_i,t_i\}_{i\geq 0}$ satisfying (a) for some pair $(R,I)$ and for every $i\geq 0$, let
\[
R_i+I\wh{R_\infty}\subseteq \wh{R_\infty}
\]
denote the sum (as additive subgroups of $\wh{R_\infty}$) of the image of $R_i$ in $\wh{R_\infty}$ and the ideal $I\wh{R_\infty}$ of $\wh{R_\infty}$. Note that $R_i+I\wh{R_\infty}$ is a subring of $\wh{R_\infty}$, which coincides with the $R_i$-subalgebra of $\wh{R_\infty}$ generated by the subset $I\wh{R_\infty}\subseteq \wh{R_\infty}$.
\end{notation}

Under this notation, suppose that $\{R_i,t_i\}_{i\geq 0}$ is a purely inseparable tower arising from $(R,I)$. Then the injection $R_i/IR_i \hookrightarrow \wh{R_\infty}/I\wh{R_\infty}$ of \cref{lem:inj} induces an isomorphism
\begin{equation}
\label{eq:isom alpha}
R_i/IR_i \xr{\cong} (R_i+I\wh{R_\infty})/I\wh{R_\infty}.
\end{equation}
Note that we have a commutative diagram
\[
\begin{tikzcd}
 R_i \rar \dar["t_i"'] & R_i + I\widehat{R_\infty} \dar \\
 R_{i+1} \rar & R_{i+1} + I\widehat{R_\infty},
\end{tikzcd}
\]
where the right vertical arrow is the inclusion $R_i + I\wh{R_{\infty}} \subseteq R_{i+1}+I\widehat{R_\infty}$.
Moreover, the axiom (c) implies that the $p$-th power map on $\wh{R_\infty}$ induces a multiplicative map between subrings
\[
R_{i+1}+I\wh{R_\infty} \to R_i+I\wh{R_\infty}
\]
for any $i\geq 0$. This motivates the following definition.

%Easiest example:
%\[
%\mathbb{Z}_p + p\wh{\mathbb{Z}_p[p^{1/p^\infty}]} \subset \wh{\mathbb{Z}_p[p^{1/p^\infty}]}
%\]

\begin{definition}[Multiplicative small tilt]
Let $\{R_i,t_i\}_{i\geq 0}$ be a purely inseparable tower arising from some pair $(R,I)$ with $IR_\infty$ being finitely generated. For every $i\geq 0$, we endow $R_i+I\wh{R_\infty}$ with the subspace topology of $\wh{R_\infty}$.
For any $j\geq 0$, we define the \emph{$j$-th multiplicative small tilt of $\{R_i,t_i\}_{i\geq 0}$ associated to $(R,I)$} as the limit of topological monoids
\[
R_j^{\mult.\flat} \coloneqq \varprojlim\{\cdots \to R_{j+i+1}+I\wh{R_\infty} \xr{x\mapsto x^p} R_{j+i}+I\wh{R_\infty} \to\cdots \xr{x\mapsto x^p} R_j+I\wh{R_\infty}\}.
\]
\end{definition}

Let us list basic topological properties of the rings $R_i+I\wh{R_\infty}$.

\begin{lemma}
\label{lem:natural}
Let $\{R_i,t_i\}_{i\geq 0}$ be a tower of rings satisfying (a) for some pair $(R,I)$ with the ideal $I\wh{R_\infty}\subseteq \wh{R_\infty}$ is finitely generated. Let $i\geq 0$.
  \begin{enumerate}
  \item $R_i+I\wh{R_\infty}$ is an open subring of $\wh{R_\infty}$.
  \item $I\wh{R_\infty}$ is an ideal of $R_i+I\wh{R_\infty}$, and the subspace topology on  $R_i+I\wh{R_\infty}\subseteq \wh{R_\infty}$ coincides with the $I\wh{R_\infty}$-adic topology as well as the $I(R_i+I\wh{R_\infty})$-adic topology.
  \item $R_i+I\wh{R_\infty}$ is $I\wh{R_\infty}$-adically complete.
  \end{enumerate}
\end{lemma}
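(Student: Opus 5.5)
Write $J\coloneqq I\wh{R_\infty}$ and $S_i\coloneqq R_i+J$. All three assertions follow from two facts: $J$ is open in $\wh{R_\infty}$, and $\wh{R_\infty}$ is $J$-adically complete. Both hold because $\wh{R_\infty}$ is the $I$-adic completion of $R_\infty$ and $J$ is finitely generated. For that reason we can freely use that the $I$-adic topology on $\wh{R_\infty}$ coincides with the $J$-adic one, and that $\wh{R_\infty}$ is $J$-adically complete. This is standard for completions along a finitely generated ideal, and is exactly where the finite generation hypothesis on $I\wh{R_\infty}$ enters.

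\textbf{Step 1: assertion (1).} We check that $S_i$ is closed under multiplication. We have $r\cdot x\in J$ for $r$ in the image of $R_i$ and $x\in J$, since $J$ is an ideal of $\wh{R_\infty}$, and the image of $R_i$ is itself a subring. So $S_i$ is a subring. It contains $J$, which is open, so $S_i$ is a union of cosets of an open subgroup and hence open.

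\textbf{Step 2: assertion (2).} Since $J\subseteq S_i$ is an ideal of $\wh{R_\infty}$, it is in particular an ideal of $S_i$. The subspace topology on $S_i$ has the neighborhood basis $\{J^n\cap S_i\}=\{J^n\}$ of $0$, because $J^n\subseteq J\subseteq S_i$. This is exactly the basis of the $J$-adic topology of $S_i$ when $J$ is regarded as an ideal of $S_i$. The one point to watch is that $J^n$, computed as an ideal power in $S_i$, agrees with $J^n$ computed in $\wh{R_\infty}$; both are the additive group generated by $n$-fold products of elements of $J$, so they agree.

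For the $I S_i$-adic topology, let $f_1,\dots,f_r$ be generators of $I$ mapped into $S_i$. Then $IS_i\subseteq J$. Conversely,
\[
J^2=I\cdot I\wh{R_\infty}\subseteq I\cdot J\subseteq IS_i .
\]
The first equality holds because $I\wh{R_\infty}\cdot I\wh{R_\infty}=I^2\wh{R_\infty}$, and the middle inclusion holds because $J\subseteq S_i$. The chain $J^2\subseteq IS_i\subseteq J$ shows the two adic topologies coincide.

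\textbf{Step 3: assertion (3).} The ring $S_i$ is an open subgroup of the complete Hausdorff group $\wh{R_\infty}$, so it is closed, and therefore complete and Hausdorff in the subspace topology. By Step 2, this subspace topology is the $J$-adic topology on $S_i$. Concretely, a $J$-adically Cauchy sequence in $S_i$ converges in $\wh{R_\infty}$, and its limit lies in $S_i$ because $S_i$ is closed. Hence $S_i$ is $J$-adically complete.

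The main obstacle is the preliminary fact that the $I$-adic topology on $\wh{R_\infty}$ is the $J$-adic one, with $\wh{R_\infty}$ complete for it. Since $J$ is finitely generated, I would quote the standard result on completions along finitely generated ideals rather than prove it; the rest is formal.
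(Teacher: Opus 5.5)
Your proof is correct and is essentially the paper's argument: (1) holds because $S_i$ contains the open ideal $J$; (2) follows from the sandwich $J^2=I^2\widehat{R_\infty}\subseteq IS_i\subseteq J$; and (3) holds because an open subgroup of the complete ring $\widehat{R_\infty}$ is closed, with subspace topology equal to the $J$-adic one by (2). You are in fact more explicit than the paper, which uses without comment the openness of $J$ and the $J$-adic completeness of $\widehat{R_\infty}$; you correctly identify these as the places where the finite generation hypothesis enters.
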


\begin{proof}
The assertion (1) follows from the fact that $R_i+I\wh{R_\infty}$ contains $I\wh{R_\infty}$.
The assertion (2) follows from $I^2\wh{R_\infty}\subseteq I(R_i+I\wh{R_\infty}) \subseteq I\wh{R_\infty}$.
Finally, it follows from (1) that $R_i+I\wh{R_\infty}$ is closed in $\wh{R_\infty}$. Hence it is complete with respect to the subspace topology, which is $I\wh{R_\infty}$-adic by (2).
Hence the assertion (3) holds.
\end{proof}

\begin{remark}
The ring $R_i+I\wh{R_\infty}$ is rarely Noetherian. Indeed, we have a commutative diagram of rings
\[
\begin{tikzcd}
R_i/IR_i \rar \ar[rd,"\cong"'] & (R_i+I\wh{R_\infty})/I(R_i+I\wh{R_\infty}) \dar \\
 & (R_i+I\wh{R_\infty})/I\wh{R_\infty}.
\end{tikzcd}
\]
This implies that there exists a decomposition of $R_i$-modules
\[
\frac{R_i+I\wh{R_\infty}}{I(R_i+I\wh{R_\infty})} \cong (R_i/IR_i) \oplus \frac{I\wh{R_\infty}}{I(R_i+I\wh{R_\infty})}.
\]
Hence, if $I$ is generated by an element $f$ such that $R_i$ is $f$-torsion free for every $i\in\N\cup\{\infty\}$, then $(R_i+I\wh{R_\infty})/I(R_i+\wh{R_\infty})$ has a direct summand isomorphic to the cokernel of the inclusion $R_i+I\wh{R_\infty}\hookrightarrow \wh{R_\infty}$, which is isomorphic to that of the canonical injection $R_i/IR_i \hookrightarrow R_\infty^\flat/IR_\infty^\flat$.
Consider, for example, the perfectoid tower associated to the regular local ring $\mathbb{Z}_p$. Then the cokerel of $\F_p[[T]] \to  \wh{\F_p[T^{\frac{1}{p^\infty}}]}$ is not finite dimensional over $\F_p$, and so is $(\mathbb{Z}_p+p\wh{\mathbb{Z}_p[p^{\frac{1}{p^\infty}}]})/p(\mathbb{Z}_p+p\wh{\mathbb{Z}_p[p^{\frac{1}{p^\infty}}]})$. Consequently, $\mathbb{Z}_p+p\wh{\mathbb{Z}_p[p^{\frac{1}{p^\infty}}]}$ is not of finite type over $\mathbb{Z}_p$.
\end{remark}

%\begin{proposition}
%If $\{R_i,t_i\}_{i\geq 0}$ is a perfectoid tower arising from some pair $(R,I_0)$, then the rings $R_i+I_0\wh{R_\infty}$ together with canonical injections form a perfectoid tower arising from $(R+I_0\wh{R_\infty}, I_0\wh{R_\infty})$. Moreover, the tower satisfies
%\[
%\left(\varinjlim_{i\geq 0}(R_i+I_0\wh{R_\infty})\right)^\wedge = \wh{R_\infty},\quad (R_i+I_0\wh{R_\infty})/I_0\wh{R_\infty} \cong R_i/I_0R_i
%\]
%\end{proposition}

\begin{proposition}
\label{prop:refined monoidal}
Let $\{R_i,t_i\}_{i\geq 0}$ be a purely inseparable tower arising from some pair $(R,I)$ with the ideal $IR_\infty$ of $R_\infty$ being finitely generated. Then for any $j\geq 0$, the composition of the canonical maps
\[
R_i+I\wh{R_\infty} \to (R_i+I\wh{R_\infty})/I\wh{R_\infty} \xleftarrow{\cong} R_i/IR_i\quad (i\geq j)
\]
induces an isomorphism of topological monoids
\[
R_j^{\mult.\flat} \xr{\cong} R_j^{q.\textnormal{frep}}.
\]
\end{proposition}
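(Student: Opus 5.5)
The plan is to define the map levelwise, check that it is compatible with the transition maps, and then build an inverse by the usual limit formula, as in \cref{lem:monoidal}. For each $i\geq j$, write $\pi_i\colon R_i+I\wh{R_\infty}\to R_i/IR_i$ for the composite of the projection with the inverse of the isomorphism \eqref{eq:isom alpha}. This is a surjective ring homomorphism, hence multiplicative and continuous for the discrete topology on the target, since $\Ker\pi_i=I\wh{R_\infty}$ is open.

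First I check compatibility with the transition maps. For $x\in R_{i+1}+I\wh{R_\infty}$ we need $\pi_i(x^p)=F_i(\pi_{i+1}(x))$. Write $x=r+y$ with $r$ the image of some $r'\in R_{i+1}$ and $y\in I\wh{R_\infty}$. Since $p\in I$, we get $x^p\equiv r^p \pmod{I\wh{R_\infty}}$. By axiom (c), $\ol{r'}^p=\ol{t_i}(F_i(\ol{r'}))$ in $R_{i+1}/IR_{i+1}$. Under the injections of \cref{lem:inj} into $\wh{R_\infty}/I\wh{R_\infty}$, this says that $r^p$ is congruent modulo $I\wh{R_\infty}$ to the image of a lift of $F_i(\ol{r'})$ in $R_i$. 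So the $\pi_i$ assemble to a continuous monoid map $\Pi\colon R_j^{\mult.\flat}\to R_j^{q.\textnormal{frep}}$.

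For the inverse, take $(a_m)_{m\geq 0}\in R_j^{q.\textnormal{frep}}$ with $a_m\in R_{j+m}/IR_{j+m}$. Choose lifts $\tilde a_m\in R_{j+m}+I\wh{R_\infty}$ with $\pi_{j+m}(\tilde a_m)=a_m$, and set
\[
x_m\coloneqq\lim_{n\to\infty}\tilde a_{m+n}^{p^n}.
\]
The key estimate is the standard one: if $u\equiv v \pmod{J}$ with $p\in J$, then $u^{p^n}\equiv v^{p^n}\pmod{J^{n+1}}$. Here $J=I\wh{R_\infty}$, which is finitely generated and with respect to which $\wh{R_\infty}$ is complete. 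Since $\tilde a_{m+n+1}^p\equiv\tilde a_{m+n}\pmod{J}$ by the compatibility above, the sequence $\tilde a_{m+n}^{p^n}$ is Cauchy, so the limit exists and is independent of the choice of lifts.

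It remains to verify the properties of $(x_m)$, and this is the main obstacle: one must know the limits land in the right subrings and reduce correctly. Each term $\tilde a_{m+n}^{p^n}$ lies in $R_{j+m}+I\wh{R_\infty}$, because the $p$-th power map sends $R_{k+1}+I\wh{R_\infty}$ into $R_k+I\wh{R_\infty}$. This subring is closed by \cref{lem:natural}, so $x_m\in R_{j+m}+I\wh{R_\infty}$. Moreover $\tilde a_{m+n}^{p^n}\equiv\tilde a_m\pmod{J}$ for all $n$, so $\pi_{j+m}(x_m)=a_m$, and clearly $x_{m+1}^p=x_m$. This defines $\Sigma\colon R_j^{q.\textnormal{frep}}\to R_j^{\mult.\flat}$ with $\Pi\circ\Sigma=\mathrm{id}$.

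Conversely, for $(x_m)\in R_j^{\mult.\flat}$ we may use the lifts $\tilde a_m=x_m$. Then $\tilde a_{m+n}^{p^n}=x_m$ for all $n$, so $\Sigma\circ\Pi=\mathrm{id}$. Both maps are multiplicative, so $\Pi$ is an isomorphism of monoids.

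For continuity of $\Sigma$: if $(a_m)$ and $(a'_m)$ agree for $m\leq M+N$, the lifts can be chosen equal up to that index. Then $x_m\equiv x'_m\pmod{J^{N+1}}$ for $m\leq M$, since the two sequences share their first $N+1$ approximants modulo $J^{N+1}$. Hence $\Sigma$ is continuous for the limit topologies, and $\Pi$ is a homeomorphism.
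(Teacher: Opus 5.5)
Your proof is correct and is essentially the paper's argument. The paper embeds $R_j^{\mult.\flat}$ and $R_j^{q.\textnormal{frep}}$ into the isomorphism $\varprojlim_{x\mapsto x^p}\wh{R_\infty}\cong(\wh{R_\infty})^\flat$ of \cref{lem:monoidal}, and then only checks that the inverse $(a_m)\mapsto(\lim_n\tilde a_{m+n}^{p^n})$ lands in $R_j^{\mult.\flat}$, using that $R_i+I\wh{R_\infty}$ is closed (\cref{lem:natural}); this is exactly your key step, and the only difference is that you re-derive the ambient limit-formula isomorphism (Cauchy estimate, independence of lifts, continuity) by hand instead of citing \cref{lem:monoidal}.
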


\begin{proof}
We have the commutative diagram
\begin{equation}
\label{eq:mflat sflat inf}
\begin{tikzcd}
\varprojlim\limits_{x\mapsto x^p}\wh{R_\infty} \rar["\cong"] & (\widehat{R_\infty})^\flat \\
R_j^{\mult.\flat} \rar \uar[hookrightarrow] & R_j^{q.\textnormal{frep}}, \uar[hookrightarrow]
\end{tikzcd}
\end{equation}
where the left-hand vertical arrow is the canonical injection, the right-hand vertical arrow is the injection defined as the limit of $\{\ol{t_{j+i}}\}_{i\geq 0}$, and the upper horizontal arrow is an isomorphism because $\wh{R_\infty}$ is $p$-adically complete.

Due to the commutative diagram \eqref{eq:mflat sflat inf}, it suffices to show that the inverse \eqref{eq:inverse} of the upper horizontal arrow maps $R_j^{q.\textnormal{frep}}$ into $R_j^{\mult.\flat}$. But this immediately follows from the fact that $R_i+I\wh{R_\infty}$ is closed in $\wh{R_\infty}$ (\cref{lem:natural} (1)).
\end{proof}

Once we have shown \cref{prop:refined monoidal}, it is natural to define refined Fontaine's maps as follows:

\begin{definition}[Refined Fontaine's map]
\label{def:refined monoidal}
Let $\{R_i,t_i\}_{i\geq 0}$ be a purely inseparable tower arising from some pair $(R,I)$ with $IR_\infty$ being finitely generated. For any $j\geq 0$, we define the map of topological monoids $\sharp^{(j)}\colon R_j^{q.\textnormal{frep}} \to R_{j+m}+I\wh{R_\infty}$ as the composition
\[
\sharp^{(j)}\colon R_j^{q.\textnormal{frep}} \xleftarrow{\cong} R_j^{\mult.\flat} \xr{\pr_0} R_j+I\wh{R_\infty},
\]
where the first isomorphism arises from \cref{prop:refined monoidal}, and the second map is the $0$-th projection.
\end{definition}

\begin{remark}
Although the refined monoidal map $\sharp^{(j)}$ is nothing more than the restriction of the ordinary monoidal map $\sharp\colon R_\infty^\flat \to \wh{R_\infty}$ (see the commutative diagram \eqref{eq:mflat sflat inf}), our construction confirms that $\sharp$ maps $R_j^{q.\textnormal{frep}}$ into $R_j+I\wh{R_\infty}$. Here is an alternative proof of this result. We use the so-called the \emph{theta map} $\theta\colon W(R_\infty^\flat)\to \wh{R_\infty}$, which is well-defined because of the $p$-adically completeness of $\wh{R_\infty}$; see \cite[\S3]{BMS1} for its construction. Indeed, we have the commutative diagram
\[
\begin{tikzcd}
R_\infty^\flat \rar["{[\cdot]}"] \ar[rr,bend left,"\sharp"] \ar[rd,equal] & W(R_\infty^\flat) \dar["\omega_0"] \rar["\theta"] & \wh{R_\infty} \dar["\ \mathrm{mod}\ I\wh{R_\infty}"] \\
R_j^{q.\textnormal{frep}} \uar[hookrightarrow] \ar[rd,equal] & R_\infty^\flat \rar["\pr_0"'] & \wh{R_\infty}/I\wh{R_\infty} \\
 & R_j^{q.\textnormal{frep}} \uar[hookrightarrow] \rar["\pr_0"'] & R_j/IR_j \uar[hookrightarrow]
\end{tikzcd}
\]
where $[\cdot]$ is the Teichm\"{u}ller map and $\omega_0$ is the $0$-th component map. Hence $\sharp$ maps $R_j^{q.\textnormal{frep}}$ to the image of $R_j/IR_j \hookrightarrow \wh{R_\infty}/I\wh{R_\infty}$, as claimed. 
\end{remark}

\begin{remark}
By definition, we have the following commutative diagram
\begin{equation}
\label{eq:sharpPhi}
\begin{tikzcd}
R_j^{q.\textnormal{frep}} \rar["\sharp^{(j)}"] \ar[rd,"\Phi^{(j)}"'] & R_j+I\wh{R_\infty} \dar \\
 & R_j/IR_j.
\end{tikzcd}
\end{equation}
\end{remark}

In the rest of this section, we provide an analogue of \cref{thm:sharp}.

\begin{proposition}
Let $\{R_i,t_i\}_{i\geq 0}$ be a purely inseparable tower arising from some pair $(R,I)$ with $IR_\infty$ being finitely generated. Fix $j\geq 0$. If $R_j$ is $I$-adically henselian, then the map
\[
\mathrm{Idem}(R_j) \to \mathrm{Idem}(R_j^{q.\textnormal{frep}});\quad e\mapsto (e,e,e,\ldots)
\]
is bijective. In paritcular, $R_j$ is connected if and only if so is $R_j^{q.\textnormal{frep}}$.
\end{proposition}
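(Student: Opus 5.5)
The plan is to factor the map through idempotents of the quotients $R_{j+m}/IR_{j+m}$, splitting it into two bijections:
\[
\mathrm{Idem}(R_j)\to \mathrm{Idem}(R_j/IR_j) \leftarrow \mathrm{Idem}(R_j^{q.\textnormal{frep}}).
\]
Here $(e,e,e,\ldots)$ means the compatible system whose $m$-th component is the image of $e$ under $R_j\to R_{j+m}\to R_{j+m}/IR_{j+m}$. This system is well defined: $F_{j+m}$ is characterised by $\ol{t_{j+m}}\circ F_{j+m}=F$, and $\ol{t_{j+m}}$ is injective by axiom (b). Since $F(\bar e)=\bar e^p=\bar e$, we get $F_{j+m}(\bar e)=\bar e$ at each level. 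The result is a ring element of $R_j^{q.\textnormal{frep}}$, and it is idempotent because each component is.

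First step: because $R_j$ is $I$-adically henselian, reduction gives a bijection $\mathrm{Idem}(R_j)\xr{\sim}\mathrm{Idem}(R_j/IR_j)$. This is the standard lifting of idempotents for henselian pairs. Composing the map in the statement with the $0$-th projection $\Phi^{(j)}$ recovers exactly this reduction bijection. So it suffices to show that $\Phi^{(j)}$ induces a bijection $\mathrm{Idem}(R_j^{q.\textnormal{frep}})\to\mathrm{Idem}(R_j/IR_j)$.

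Second step (injectivity of $\Phi^{(j)}$ on idempotents): let $\epsilon=(\epsilon_m)_{m\ge0}$ be an idempotent of $R_j^{q.\textnormal{frep}}$. Each component $\epsilon_m\in R_{j+m}/IR_{j+m}$ is idempotent, so $\ol{t_{j+m}}(\epsilon_m)=\epsilon_m^p$ equals $F(\epsilon_{m+1})=\epsilon_{m+1}^p=\epsilon_{m+1}$, using $\ol{t_{j+m}}\circ F_{j+m}=F$. By induction, $\epsilon_m$ is the image of $\epsilon_0$ in $R_{j+m}/IR_{j+m}$. Hence $\epsilon$ is determined by $\epsilon_0=\Phi^{(j)}(\epsilon)$.

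Third step (surjectivity): given an idempotent $\bar e\in R_j/IR_j$, the construction of the first paragraph applied to $\bar e$ gives an idempotent of $R_j^{q.\textnormal{frep}}$ lifting $\bar e$. Combining the three steps, the map $e\mapsto(e,e,\ldots)$ is the composite of the henselian bijection with the inverse of $\Phi^{(j)}|_{\mathrm{Idem}}$, hence bijective.

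The connectedness statement follows immediately, since connectedness means having exactly the two idempotents $0\ne1$. One caveat is the zero ring: if $R_j=0$ then $R_j^{q.\textnormal{frep}}=0$, and both rings fail to be connected, so the equivalence still holds.

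The only step using outside input is the henselian lifting. The one point I expect to need care is checking that $(e,e,\ldots)$ is compatible with the Frobenius projections; the identity $\ol{t}\circ F_i=F$ together with injectivity of $\ol{t}$ handles this. Notably, the hypothesis that $IR_\infty$ is finitely generated, and the map $\sharp^{(j)}$, seem unnecessary for this argument.
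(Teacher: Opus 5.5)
Your proof is correct, but it takes a different route from the paper. The paper uses a commutative square in which the refined monoidal map $\sharp^{(j)}$ restricts to a bijection $\mathrm{Idem}(R_j^{q.\textnormal{frep}})\to\mathrm{Idem}(R_j+I\wh{R_\infty})$, as in the analogous statement for $\sharp\colon A^\flat\to A$. Reduction then gives a bijection $\mathrm{Idem}(R_j+I\wh{R_\infty})\to\mathrm{Idem}(R_j/IR_j)$, because $R_j+I\wh{R_\infty}$ is $I\wh{R_\infty}$-adically complete (\cref{lem:natural}~(3) plus idempotent lifting for complete rings). Henselian lifting on $R_j$ supplies the remaining side.

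You instead show directly that the $0$-th projection $\Phi^{(j)}$ is bijective on idempotents, using only axioms (b) and (c). An idempotent is fixed by Frobenius, so a compatible system of idempotents must be the system of images of its $0$-th component. Conversely, that system is automatically compatible with the Frobenius projections, by injectivity of $\ol{t_{j+m}}$. Your route is shorter and stays entirely in characteristic $p$. As you observed, it also shows that the hypothesis that $IR_\infty$ is finitely generated is unnecessary; the paper needs it only to build $\sharp^{(j)}$ and to make $R_j+I\wh{R_\infty}$ complete. The paper's route gives in addition the identification $\mathrm{Idem}(R_j^{q.\textnormal{frep}})\cong\mathrm{Idem}(R_j+I\wh{R_\infty})$, realized by the monoidal map inside a mixed-characteristic ring, which is the point of that section.

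One cosmetic fix: in your injectivity step the chain should read $\ol{t_{j+m}}(\epsilon_m)=\ol{t_{j+m}}(F_{j+m}(\epsilon_{m+1}))=F(\epsilon_{m+1})=\epsilon_{m+1}^p=\epsilon_{m+1}$. The intermediate ``$=\epsilon_m^p$'' you wrote mixes elements of $R_{j+m}/IR_{j+m}$ and $R_{j+m+1}/IR_{j+m+1}$.
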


\begin{proof}
The assertion follows from the commutative diagram
\[
\begin{tikzcd}
\mathrm{Idem}(R_j) \rar \dar["\sim"' sloped] & \mathrm{Idem}(R_j^{q.\textnormal{frep}}) \dar["\sharp^{(j)}","\sim"' sloped] \\
\mathrm{Idem}(R_j/IR_j) & \mathrm{Idem}(R_j+I\wh{R_\infty}), \lar["\sim"]
\end{tikzcd}
\]
where the lower horizontal map is bijective by \cref{lem:natural} (3) and \cite[Corollary 3.4.16]{Ford}. Moreover, the left-hand vertical map is bijective because $R_j$ is $I$-adically henselian (\cite[\href{https://stacks.math.columbia.edu/tag/09XI}{Tag 09XI}]{stacks-project}).
\end{proof}

\begin{lemma}
\label{lem:sharpf}
Let $\{R_i,t_i\}_{i\geq 0}$ be a perfectoid tower arising from some pair $(R,I_0)$. Let $\{I_i\}_{i\geq 0}$ be the system of perfectoid pillars (\cite[Definition 3.27]{INS25}). Then we have a sequence of principal ideals $\{I_i^{s.\flat}\}_{i\geq 0}$ as in \cite[Definition 3.29 (2)]{INS25}. Fix $j\geq 0$.
If $f_j^{s.\flat}\in I_j^{s.\flat}$ denotes a generator of $I_j^{s.\flat}$, then $\sharp^{(j)}(f_j^{s.\flat})$ generates the ideal $I_{j}(R_{j}+I_0\wh{R_\infty})$. In particular, if $\wh{R_\infty}$ is $I_0$-torsion free, then
\[
\sharp^{(j)}(f_{j}^{s.\flat}) = f_{j}u_{j}
\]
for some unit $u_{j}\in\wh{R_\infty}^\times$, where $f_{j}$ denotes a generator of $I_{j}$.
\end{lemma}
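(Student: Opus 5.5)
The plan is to compare $\sharp^{(j)}(f_j^{s.\flat})$ with $f_j$ modulo $I_0\wh{R_\infty}$, using the commutative triangle \eqref{eq:sharpPhi}, and then to remove the error term $I_0\wh{R_\infty}$ by $p$-power compatibility and completeness. First I recall from \cite[Definitions 3.27, 3.29]{INS25} the facts I need about the perfectoid pillars and their tilts:
\begin{itemize}
\item each $I_i=(f_i)$ is principal, with $I_{i+1}^p=I_iR_{i+1}$ (so $I_0R_i=I_i^{p^i}$);
\item $F_i$ maps $I_{i+1}(R_{i+1}/I_0R_{i+1})$ onto $I_i(R_i/I_0R_i)$;
\item $I_j^{s.\flat}$ is the principal ideal of $R_j^{s.\flat}$ whose $0$-th projection $\Phi^{(j)}(I_j^{s.\flat})$ is $I_j(R_j/I_0R_j)$;
\item $(f_{j+1}^{s.\flat})^p$ is a unit multiple of $f_j^{s.\flat}$ (via $t_j^{s.\flat}$).
\end{itemize}

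\emph{Step 1 (congruence).} Write $x=f_j^{s.\flat}$. By \eqref{eq:sharpPhi}, the image of $\sharp^{(j)}(x)$ in $(R_j+I_0\wh{R_\infty})/I_0\wh{R_\infty}\cong R_j/I_0R_j$ is $\Phi^{(j)}(x)$, and this generates $I_j(R_j/I_0R_j)$. Lifting, we get
\[
\sharp^{(j)}(x)=f_ja+f_0b,\qquad f_j=\sharp^{(j)}(x)\,r+f_0c,
\]
with $a,r\in R_j+I_0\wh{R_\infty}$ and $b,c\in\wh{R_\infty}$.

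\emph{Step 2 (removing $f_0b$ and $f_0c$).} This is the main obstacle. The term $f_0b=f_j\cdot f_j^{p^j-1}b$ need not lie in $f_j(R_j+I_0\wh{R_\infty})$, because $f_j^{p^j-1}\notin I_0$. To handle it I would use multiplicativity. Write $x$ as a unit times $\bigl(t(f_{j+m}^{s.\flat})\bigr)^{p^m}$ for large $m$, so that $\sharp^{(j)}(x)$ is a unit times $\sharp^{(j+m)}(f_{j+m}^{s.\flat})^{p^m}$; the units here come from units of the small tilts, whose $\sharp$-images are units. Applying Step 1 at level $j+m$ gives
\[
\sharp^{(j+m)}(f_{j+m}^{s.\flat})=f_{j+m}a'+f_0b'.
\]
Expanding the $p^m$-th power binomially:
\begin{itemize}
\item the leading term is $f_{j+m}^{p^m}a'^{p^m}$, a unit times $f_j a'^{p^m}$;
\item every other term has $f_0$-adic valuation large enough, for $m\gg0$, to lie in $f_jI_0\wh{R_\infty}$.
\end{itemize}
Axiom (c) makes the $p$-th power map send $R_{i+1}+I_0\wh{R_\infty}$ into $R_i+I_0\wh{R_\infty}$, so $a'^{p^m}\in R_j+I_0\wh{R_\infty}$. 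This yields $\sharp^{(j)}(x)\in I_j(R_j+I_0\wh{R_\infty})$.

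For the reverse inclusion, I would use the same $p$-power trick on $f_j=f_{j+m}^{p^m}$. Combined with Step 1 at level $j+m$, it gives $f_j=\sharp^{(j)}(x)\,r'+f_j\varepsilon$ with $\varepsilon\in I_0\wh{R_\infty}$. Since $R_j+I_0\wh{R_\infty}$ is $I_0$-adically complete (\cref{lem:natural}(3)), $1-\varepsilon$ is a unit there, so $f_j\in\sharp^{(j)}(x)(R_j+I_0\wh{R_\infty})$. (If the valuation bookkeeping fails for small $m$, I would instead run a successive approximation in $I_0$-adic powers, again using completeness.)

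\emph{Step 3 (torsion free case).} By Step 2 we can write $\sharp^{(j)}(x)=f_js$ and $f_j=\sharp^{(j)}(x)r$ with $r,s\in\wh{R_\infty}$. Then $f_j(1-sr)=0$. Since $f_j^{p^j}=f_0$ up to a unit and $\wh{R_\infty}$ is $I_0$-torsion free, $f_j$ is a non-zero-divisor, so $sr=1$. Thus $u_j:=s\in\wh{R_\infty}^\times$ and $\sharp^{(j)}(f_j^{s.\flat})=f_ju_j$.
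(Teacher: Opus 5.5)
Your Step 1 is exactly the paper's proof: \cite[Lemma 3.30]{INS25} plus \eqref{eq:sharpPhi}, followed by ``pulling back''. Your worry about it is justified, because pulling back only yields
\[
\sharp^{(j)}(f_j^{s.\flat})\,(R_j+I_0\wh{R_\infty})+I_0\wh{R_\infty}=I_j(R_j+I_0\wh{R_\infty})+I_0\wh{R_\infty}.
\]
However, your Step 2 does not remove the error term. Write the image of $f_j^{s.\flat}$ in $R_{j+m}^{s.\flat}$ as $w\,(f_{j+m}^{s.\flat})^{p^m}$, and write $f_{j+m}^{p^m}=f_jv$. The units $w$ and $v$ live in $R_{j+m}^{s.\flat}$ and $R_{j+m}$. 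So $\sharp^{(j+m)}(w)$ and $v$ are units of $R_{j+m}+I_0\wh{R_\infty}$, not of $R_j+I_0\wh{R_\infty}$. Your expansion therefore gives
\[
\sharp^{(j)}(f_j^{s.\flat})=f_j\cdot\sharp^{(j+m)}(w)\,v\,a'^{p^m}+(\text{an element of }f_jI_0\wh{R_\infty}).
\]
Knowing $a'^{p^m}\in R_j+I_0\wh{R_\infty}$ says nothing about the product $\sharp^{(j+m)}(w)\,v\,a'^{p^m}$. The reverse inclusion has the same defect, since it uses ``$f_j=f_{j+m}^{p^m}$'', which holds only up to $v$. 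What your argument actually proves is the equality of ideals $\sharp^{(j)}(f_j^{s.\flat})\wh{R_\infty}=f_j\wh{R_\infty}$ in $\wh{R_\infty}$.

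No repair is possible, because the first assertion of the statement fails. Take $p=2$ and $R_i=\mathbb{Z}_2[\zeta_{2^{i+1}}]$. This is a perfectoid tower arising from $(\mathbb{Z}_2,(2))$; axioms (a)--(g) are checked directly. Put $\pi_i=\zeta_{2^{i+1}}-1$, so that $\pi_{i+1}^2=\pi_i-2\pi_{i+1}$. Then $x=(\bar\pi_0,\bar\pi_1,\bar\pi_2,\ldots)$ lies in $I_0^{s.\flat}$, since $\bar\pi_0=0$ in $R_0/2R_0$. The binomial theorem gives $\pi_{n+1}^{2^{n+1}}\equiv\pi_n^{2^n}\pmod{4\wh{R_\infty}}$ for $n\ge 1$, hence $\sharp^{(0)}(x)\equiv\pi_1^2=-2-2\pi_1\pmod{4\wh{R_\infty}}$. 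Suppose this lay in $I_0(R_0+I_0\wh{R_\infty})=2\mathbb{Z}_2+4\wh{R_\infty}$. Then $2$-torsion-freeness would give $\pi_1\in\mathbb{Z}_2+2\wh{R_\infty}$. This contradicts the injectivity of $R_1/2R_1\to\wh{R_\infty}/2\wh{R_\infty}$ (\cref{lem:inj}), since $\bar\pi_1\notin\{0,1\}$. Moreover, for any generator $g$ of $I_0^{s.\flat}$ we have $x=gy$ with $\sharp^{(0)}(y)\in R_0+I_0\wh{R_\infty}$, so no generator satisfies the claim. Only the ideal equality in $\wh{R_\infty}$ survives. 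With it, your Step 3 correctly gives $\sharp^{(j)}(f_j^{s.\flat})=f_ju_j$ with $u_j\in\wh{R_\infty}^\times$. In the example, $u_0\equiv-1-\pi_1\pmod{2\wh{R_\infty}}$, so $u_0\notin R_0+I_0\wh{R_\infty}$. This weaker statement is what \cref{tiltingintegral} actually needs.
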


\begin{proof}
By \cite[Lemma 3.30]{INS25}, we have $\Phi^{(j)}(I_j^{s.\flat}) = I_{j}({R_{j}}/I_0R_{j})$. Here $\Phi^{(j)}_i(I_j^{s.\flat})$ is generated by $\sharp^{(j)}(f_j^{s.\flat})\ \mathrm{mod}\ I_0\wh{R_\infty} \in {R_{j}}/I_0R_{j}$ by \eqref{eq:sharpPhi}. Thus we obtain the assertion by pulling back to $R_{j}+I_0\wh{R_\infty}$.
\end{proof}

\begin{proposition}
\label{prop:sharpi2}
Let $\{R_i,t_i\}_{i\geq 0}$ be a perfectoid tower arising from some pair $(R,I_0)$. For every $j\geq 0$, $\sharp^{(j)}\colon R_j^{s.\flat}\to R_j+I_0\wh{R_\infty}$ induces an isomorphism of rings
\begin{equation}
\label{eq:sharp1}
R_j^{s.\flat}/I_0^{s.\flat}R_j^{s.\flat} \xr{\cong} R_j/I_0R_j
\end{equation}
and an isomorphism of (possibly) non-unital rings
\begin{equation}
\label{eq:sharp2}
(R_j^{s.\flat})_{\textrm{$I_0^{s.\flat}$-}\mathrm{tor}}\xr{\cong} (R_j)_{\textrm{$I_0$-}\mathrm{tor}}
\end{equation}
\end{proposition}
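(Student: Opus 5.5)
The plan is to exploit the commutative triangle \eqref{eq:sharpPhi}, which says that the composite of $\sharp^{(j)}\colon R_j^{s.\flat}\to R_j+I_0\wh{R_\infty}$ with reduction modulo $I_0\wh{R_\infty}$, followed by the inverse of the isomorphism \eqref{eq:isom alpha}, is the $0$-th projection $\Phi^{(j)}\colon R_j^{s.\flat}\to R_j/I_0R_j$. Since $\Phi^{(j)}$ is a ring homomorphism, the map ``induced by $\sharp^{(j)}$'' in \eqref{eq:sharp1} should be read as this additive composite. So for \eqref{eq:sharp1} it suffices to show two things: $\Phi^{(j)}$ is surjective, and $\Ker\Phi^{(j)}=I_0^{s.\flat}R_j^{s.\flat}$.

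\textbf{Surjectivity.} Every $F_i$ is surjective by axiom (d). Hence each projection out of the inverse limit defining $R_j^{s.\flat}$ is surjective: a sequence of compatible preimages can be chosen step by step.

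\textbf{The kernel.} This is the main step. I would use the pillars $I_i$ and the ideals $I_i^{s.\flat}$ of \cite[Definition 3.29]{INS25}, together with \cite[Lemma 3.30]{INS25}, which gives $\Phi^{(j)}(I_j^{s.\flat})=I_j(R_j/I_0R_j)$, as already used in \cref{lem:sharpf}. Write $x=(x_0,x_1,\dots)\in\Ker\Phi^{(j)}$, so $x_0=0$.
\begin{itemize}
\item The relation $F_{j+m}(x_{m+1})=x_m$, combined with (f-2) (kernels of the $F_i$ are generated by the image of $I_1$), shows inductively that $x_m$ lies in the image of $I_0$-type pillar powers.
\item Equivalently, I would invoke the known fact \cite[Lemma 3.30 / Proposition 3.31]{INS25} that $\Ker\Phi^{(j)}=I_0^{s.\flat}R_j^{s.\flat}$, where $I_0^{s.\flat}=(f_0^{s.\flat})$ with $f_0^{s.\flat}=(f_j^{s.\flat})^{p^{j+1}}$ up to units.
\end{itemize}
If I must argue directly instead, take $f=f_0^{s.\flat}$, which is compatible with $\sharp^{(j)}$ by \cref{lem:sharpf} ($\sharp(f)$ is a unit times $f_0$). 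Given $x$ with $x_0=0$, I would construct $y$ with $x=fy$ by shifting coordinates: $x_{m+1}$ lies in the kernel of a $p^{m+1}$-fold Frobenius composite, and one divides using (f-2) and the $p$-root closedness encoded there. Compatibility of the chosen quotients is the delicate point, so I expect this kernel computation to be the main obstacle; I would first try to cite INS25 for it.

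\textbf{Torsion isomorphism \eqref{eq:sharp2}.} I would first reduce to \cref{lem:torisom}. The vertical isomorphism in \eqref{eq:tor2} comes from \cite[Theorem 3.35(2)]{INS25} and is levelwise: it restricts to $(R_j^{s.\flat})_{I_0^{s.\flat}\text{-tor}}\cong(R_j)_{I_0\text{-tor}}$, and it is defined via the maps $(F_i)_{\mathrm{tor}}$ of axiom (g). It remains to identify this isomorphism with $\sharp^{(j)}$. For that I would use the Remark after \cref{lem:torisom}: on torsion the composite agrees with $\sharp$, and $\sharp^{(j)}$ is the restriction of $\sharp$ by \eqref{eq:mflat sflat inf}. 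Restricting to the $j$-th layer on both sides, via the injections $R_j^{s.\flat}\hookrightarrow R_\infty^\flat$ and $(R_j)_{\mathrm{tor}}\hookrightarrow(\wh{R_\infty})_{\mathrm{tor}}$, the target of $\sharp^{(j)}$ on torsion lands in the image of $(R_j)_{I_0\text{-tor}}$. The map is then an isomorphism because both restrictions are compatible with the levelwise isomorphism of \cite[Theorem 3.35(2)]{INS25}. Additivity and multiplicativity on torsion follow from that isomorphism being one of non-unital rings.
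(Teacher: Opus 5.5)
Your proposal is correct and follows the paper's proof. For \eqref{eq:sharp1} the paper also combines the triangle \eqref{eq:sharpPhi} with the fact from \cite{INS25} (Lemma~3.39 there) that $\Phi^{(j)}$ induces $R_j^{s.\flat}/I_0^{s.\flat}R_j^{s.\flat}\cong R_j/I_0R_j$, so your tentative direct kernel computation is not needed; for \eqref{eq:sharp2} it likewise combines the torsion isomorphism induced by $\sharp$ (via \cref{lem:sharpf}) with the levelwise isomorphism of \cite[Theorem 3.35 (2)]{INS25}, exactly as you do. One harmless slip: since $I_j^{p^j}=I_0R_j$, one has $f_0^{s.\flat}=(f_j^{s.\flat})^{p^j}$ up to a unit, not $(f_j^{s.\flat})^{p^{j+1}}$.
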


\begin{proof}
\eqref{eq:sharp1} follows from \cite[Lemma 3.39]{INS25} combined with \eqref{eq:sharpPhi}. To show \eqref{eq:sharp2}, note that $\sharp$ induces an isomorphism $(R_\infty^\flat)_{\textrm{$I_0^{s.\flat}$-}\mathrm{tor}} \xr{\cong} (\wh{R_\infty})_{\textrm{$I_0$-}\mathrm{tor}}$ by \cref{lem:sharpf}. Combining this with \cite[Theorem 3.35 (2)]{INS25}, we have the assertion.
\[
\begin{tikzcd}[row sep=small]
(R_\infty^\flat)_{\textrm{$I_0^{s.\flat}$-}\mathrm{tor}} \rar["\sharp","\cong"'] & (\wh{R_\infty})_{\textrm{$I_0$-}\mathrm{tor}} \\
 & (R_j+I_0\wh{R_\infty})_{\textrm{$I_0$-}\mathrm{tor}} \uar[hookrightarrow] \\
(R_j^{s.\flat})_{\textrm{$I_0^{s.\flat}$-}\mathrm{tor}} \rar["\cong"] \ar[uu,hookrightarrow] \ar[ru,"\sharp^{(j)}"] & (R_j)_{\textrm{$I_0$-}\mathrm{tor}} \uar
\end{tikzcd}
\]
\end{proof}

%\begin{equation}
%\label{eq:tor2}
%\begin{tikzcd}
%(R_\infty^{s.\flat})_{\textrm{$I_0^{s.\flat}$-}\mathrm{tor}} \rar["\cong"] \dar["\cong"'] & (R_\infty^\flat)_{\textrm{$I_0^{s.\flat}$-}\mathrm{tor}} \dar["\cong"',"\sharp"] \\
%(R_\infty)_{\textrm{$I_0$-}\mathrm{tor}} \rar["\cong"] & (\wh{R_\infty})_{\textrm{$I_0$-}\mathrm{tor}}
%\end{tikzcd}
%\end{equation}

\begin{theorem}
\label{tiltingintegral}
Let $\{R_i,t_i\}_{i\geq 0}$ be an $I_0$-torsion free perfectoid tower arising from some pair $(R,I_0)$. Let $I_0=(f_0)$ and $I_0^{s.\flat}=(f_0^{s.\flat})$. If $R_i$ is completely integrally closed in $R_i[\frac{1}{f_0}]$ for every $i\geq 0$, then $R_i^{s.\flat}$ is completely integrally closed in $R_i^{s.\flat}[\frac{1}{f_0^{s.\flat}}]$ for every $i\geq 0$.
\end{theorem}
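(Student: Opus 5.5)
The plan is to pass through the completed colimits on both sides, where \cref{thm:sharp}(3) applies, and then descend back to the individual layers.

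First I apply \cref{lem:citower}(2) to the tower $\{R_i,t_i\}_{i\geq 0}$ with $f=f_0$. Axioms (a) and (b) hold, and $I_0$-torsion freeness of $R_i$ for all $i\in\N\cup\{\infty\}$ comes from the Proposition-Definition. The hypothesis that every $R_i$ is completely integrally closed in $R_i[\frac{1}{f_0}]$ then gives that $\wh{R_\infty}$ is completely integrally closed in $\wh{R_\infty}[\frac{1}{f_0}]$.

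Second, I pass to the tilt. The ring $\wh{R_\infty}$ is perfectoid (\cref{ntn:tower}). I choose $\varpi$ to be a unit multiple of $f_1$ (or of $f_j$ for suitable $j$) so that $p\in\varpi^pA$ and $\varpi$, $f_0$ generate the same radical. For the tilt side I take $\varpi^\flat=f_1^{s.\flat}$, whose image under $\sharp$ is a unit multiple of $f_1$ by \cref{lem:sharpf}. Complete integral closedness in $A[\frac{1}{\varpi}]$ and in $A[\frac{1}{f_0}]$ coincide, since both localizations agree because $f_0$ and $\varpi$ each divide a power of the other. Applying \cref{thm:sharp}(3), I conclude that $R_\infty^\flat$ is completely integrally closed in $R_\infty^\flat[\frac{1}{f_0^{s.\flat}}]$. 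Here I also use $\sharp(f_0^{s.\flat})=f_0u_0$ from \cref{lem:sharpf}, so that $f_0^{s.\flat}$ and $(f_1^{s.\flat})^p$ agree up to a unit. I also need that $R_\infty^\flat=\wh{R_\infty^{s.\flat}}$ is the $I_0^{s.\flat}$-adic completion (\cref{ntn:tower}) and that it is torsion free (Proposition-Definition).

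Third, I descend along the tilted tower. By \cite[Corollary 3.52]{INS25} (or Proposition 3.41), $\{R_i^{s.\flat},t_i^{s.\flat}\}$ is a perfectoid tower arising from $(R_0^{s.\flat},I_0^{s.\flat})$. It is $I_0^{s.\flat}$-torsion free at all levels, including $\infty$, by the Proposition-Definition. So \cref{lem:citower}(2) applies to it, and the implication (c)$\Rightarrow$(a) yields that each $R_i^{s.\flat}$ is completely integrally closed in $R_i^{s.\flat}[\frac{1}{f_0^{s.\flat}}]$.

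The main obstacle is the second step: matching the hypotheses of \cref{thm:sharp}(3). This requires a $\varpi$ with $p\in\varpi^pA$, together with a compatible $\varpi^\flat$ and the identification of the inverted elements on both sides. I would handle it via \cref{lem:sharpf} and the radical comparison above. If that fails, I would instead take $\varpi=f_0$ when $p\in f_0^p\wh{R_\infty}$, after reindexing the tower to start further up.
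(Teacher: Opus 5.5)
Your proposal is correct and follows the paper's argument, which simply cites \cref{lem:sharpf}, \cref{lem:citower} and \cref{thm:sharp}(3). Concretely: you ascend via \cref{lem:citower}(2) to $\wh{R_\infty}$, tilt via \cref{thm:sharp}(3) with \cref{lem:sharpf} matching $f_0^{s.\flat}$ to $f_0$ (your choice $\varpi=f_1$, $\varpi^\flat=f_1^{s.\flat}$ is a clean way to secure $p\in\varpi^p\wh{R_\infty}$), and descend along the tilted tower via \cref{lem:citower}(2) applied to $R_\infty^\flat\cong\wh{R_\infty^{s.\flat}}$; the fallback in your last sentence is unnecessary, since the $f_1$-choice always works in a perfectoid tower.
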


\begin{proof}
This follows from \cref{lem:sharpf}, \cref{lem:citower}, and \cref{thm:sharp} (3).
\end{proof}

%%%%%%%%%%%%%%%%%%%%%%%%%%%%%%%%%%%%%%%%%%%%%%%%%%%%%%%%%%%%%%%%%%%%%%%%
%%%%%%%%%%%%%%%%%%%%%%%%%%%%%%%%%%%%%%%%%%%%%%%%%%%%%%%%%%%%%%%%%%%%%%%
\section{Perfectoid towers arising from ramification theory}
\label{s:4}

In this section, we provide a construction of perfectoid towers arising from the context of ramification theory. Such a construction is originally due to Faltings \cite{Fa88} and \cite{Fa02}. Here, we follow the method by Andreatta \cite{An06}. 

%in particular, \Cref{ss:d} is devoted to the most difficult part of the proof of \cref{thm:B}.
%We utilize certain ramified towers via the almost purity theorem to construct a perfectoid tower, which is originally due to Faltings \cite{Fa88} and \cite{Fa02}. Here, we follow the method by Andreatta \cite{An06}. 

%%%%%%%%%%%%%%%%%%%%%%%%%%%%%%%%%%%%%%%%%%%%%%%%%%%%%%%
\subsection{Construction}

We begin with the following proposition concerning finiteness of integral closures.

\begin{proposition}
\label{prop:Sn normal}
Let $R\to R'$ be a flat homomorphism of normal rings.
Let $R\to S$ be an integral ring homomorphism such that there exists $g\in R$ such that both $R$ and $S$ are $g$-torsion free and the induced map $R[\frac{1}{g}]\to S[\frac{1}{g}]$ is weakly \'{e}tale.\footnote{We say that a ring map $A\to B$ is \emph{weakly \'{e}tale} if both $A\to B$ and the multiplication map $B\otimes_AB\to B$ are flat.} % (\cite[Definitiion 3.1.1]{GR03}).}
%\[
%\begin{tikzcd}
%R \rar \dar & S \dar \\
%R' \rar & R'\otimes_RS
%\end{tikzcd}
%\]
%\footnote{The normality of $S$ is required just for the equality $S=S'$ in the case $R=R'$.}, where both
Then the following assertions hold.
  \begin{enumerate}
  \item $R'\otimes_RS$ is $g$-torsion free.
  \item $(R'\otimes_RS)[\frac{1}{g}]$ is a normal ring.
  \item The integral closure $S'$ of $R'\otimes_RS$ in the total ring of fractions $Q(R'\otimes_RS)$ coincides with that of $R'$ in $(R'\otimes_RS)[\frac{1}{g}]$.
  %\[
  %R' \hookrightarrow R'\otimes_RS \hookrightarrow S' \hookrightarrow (R'\otimes_RS)[\tfrac{1}{g}] \hookrightarrow Q(R'\otimes_RS)
  %\]
  \item Assume that $R\to S$ is finite and $R'$ is Nagata (e.g., complete Noetherian local). Then $R'\to S'$ is finite. Hence, in particular, $S'$ is Noetherian.
  \end{enumerate}
\end{proposition}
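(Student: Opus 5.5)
I would prove the four assertions in order, with (1) and (2) feeding into (3).

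For (1), I would tensor the injection $S\xrightarrow{g}S$ with the flat $R$-module $R'$. This shows that $g$ is a non-zero-divisor on $R'\otimes_RS$. The same argument applied to $R$ shows that $R'$ is $g$-torsion free, although this is automatic once $g$ is a non-zero-divisor in $R$.

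For (2), I would note that $(R'\otimes_RS)[\frac1g]=R'[\frac1g]\otimes_{R[\frac1g]}S[\frac1g]$ is weakly étale over $R'[\frac1g]$, since weak étaleness is stable under base change. The ring $R'[\frac1g]$ is normal, being a localization of a normal ring. I would then invoke the fact that a weakly étale algebra over a normal ring is normal (Stacks, Tag 0950 or the surrounding results on weakly étale maps and normality; by Gabber/Bhatt–Scholze, weakly étale maps are ind-étale locally, and normality ascends along them). This step is the main obstacle, since normality needs some care in the non-Noetherian setting. If needed, I would use that $S$ is integral over $R$, so $S[\frac1g]$ is integral and weakly étale over $R[\frac1g]$. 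Because normality is local and "being normal" is defined via localizations being normal domains, I would check it at primes. The local rings of $(R'\otimes_RS)[\frac1g]$ are strict-henselization-type colimits of étale local rings over normal local domains. Étale extensions of normal rings are normal, and filtered colimits of normal domains are normal.

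For (3), set $T=R'\otimes_RS$. By (1), $T\subseteq T[\frac1g]\subseteq Q(T)$. Any element of $Q(T)$ that is integral over $T$ is integral over $T[\frac1g]$. By (2), $T[\frac1g]$ is normal; for a normal ring with finitely many minimal primes, or more generally a reduced ring that is integrally closed in its total ring of fractions, this gives that the element lies in $T[\frac1g]$. Since $T$ is integral over $R'$, the integral closure of $T$ in $Q(T)$ equals the integral closure of $R'$ in $T[\frac1g]$. I would need that normal rings are integrally closed in their total quotient rings, which holds because of reducedness and local domain-ness (Stacks, Tag 034M/037B type results). If finitely many minimal primes are needed, I would get them in (4) from Noetherianity of $T$.

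For (4), $T$ is finite over the Noetherian ring $R'$, hence Noetherian, and $T[\frac1g]$ is reduced by (2). Then $T_{\mathrm{red}}$ is a finite $R'$-algebra, so it is a Nagata ring, and its normalization $S'$ is finite over $T_{\mathrm{red}}$, hence finite over $R'$. Strictly, the Nagata property gives finiteness of normalization in the total quotient ring for reduced Noetherian Nagata rings. Since $g$ is a non-zero-divisor and $T[\frac1g]$ is reduced, $T$ itself is reduced, so this applies directly to $T$.
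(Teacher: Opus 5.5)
Your proposal is correct and follows the paper's proof for (1)–(3). For (1) you use flatness of $R'$; for (2) you base change the weakly \'etale map and apply Stacks Tag 0950 (your fallback sketch is not needed); for (3) you combine normality of $T[\frac1g]$ with integrality of $T$ over $R'$. For (4) you apply the Nagata property to the reduced finite $R'$-algebra $T$ and take its normalization directly, whereas the paper applies Stacks Tag 03GH to $R'\to T[\frac1g]$ and uses (3); these are interchangeable versions of the same finiteness argument.
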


\begin{proof}
(1) follows from the fact that $S$ is $g$-torsion free and the flatness of $R\to R'$.

(2) Since $R[\frac{1}{g}]\to S[\frac{1}{g}]$ is weakly \'{e}tale, its base change $R'[\tfrac{1}{g}] \to (R'\otimes_RS)[\tfrac{1}{g}]$ is also weakly \'{e}tale. Then $(R'\otimes_RS)[\frac{1}{g}]$ is normal because so is $R'[\frac{1}{g}]$ (\cite[\href{https://stacks.math.columbia.edu/tag/0950}{Tag 0950}]{stacks-project}).

(3) %Since $R_n[\frac{1}{f}]$ is a normal domain, \eqref{eq:fet} implies that $(R_n\otimes_RS)[\frac{1}{f}]$ is integrally closed in $Q(R_n[\frac{1}{f}])\otimes_{R_n[\frac{1}{f}]}(R_n\otimes_RS)[\frac{1}{f}]$ (\cite[\href{https://stacks.math.columbia.edu/tag/092W}{Tag 092W}]{stacks-project}). But
%\[
%Q(R_n[\tfrac{1}{f}])\otimes_{R_n[\tfrac{1}{f}]}(R_n\otimes_RS)[\tfrac{1}{f}] \cong Q\left((R_n\otimes_RS)[\tfrac{1}{f}]\right) = Q(R_n\otimes_RS)
%\]
%by \cref{lem:total ring of frac}, and so we obtain the assertion.
By (2), $S'$ coincides with the integral closure of $R'\otimes_RS$ in $(R'\otimes_RS)[\frac{1}{g}]$. Hence, it suffices to check that $R'\otimes_RS$ is integral over $R'$. But this is clear, since $R'\to R'\otimes_RS$ is the base change of the integral map $R\to S$.

(4) By assumption, $(R'\otimes_RS)[\frac{1}{g}]$ is essentially of finite type over the Nagata ring $R'$. Thus \cite[\href{https://stacks.math.columbia.edu/tag/03GH}{Tag 03GH}]{stacks-project} together with (3) shows that $R'\to S'$ is finite.
%(6) By (2), (4) and (5), $S_n$ is a Noetherian reduced ring. Hence $S_n$ is normal if and only if it is integrally closed in its total ring of fractions (\cite[\href{https://stacks.math.columbia.edu/tag/030C}{Tag 030C}]{stacks-project}), which follows immediate from (4).
\end{proof}

We fix notations.

\begin{notation}
In the rest of this section, we use the following notation.
  \begin{itemize}
  \item $R$ : a $d$-dimensional unramified complete regular local ring of mixed characteristic $(0,p)$ whose residue field $k$ is perfect.
  \item $\{R_n\}_{n\geq 0}$ : the associated perfectoid tower arising from $(R,(p))$ (see \Cref{eg:RLRTower}).
  \item $R\hookrightarrow S$ : a finite extension of normal domains such that $R[\frac{1}{p}]\to S[\frac{1}{p}]$ is \'{e}tale.
  \item $S_n$ ($n\geq 0$) : the integral closure of $R_n\otimes_RS$ in $Q(R_n\otimes_RS)$. By \cref{prop:Sn normal} (3) (4), $S_n$ is the integral closure of $R_n$ in $(R_n\otimes_RS)[\frac{1}{p}]$, and $R_n\to S_n$ is finite such that $R_n[\frac{1}{p}]\to S_n[\frac{1}{p}]=(R_n\otimes_RS)[\frac{1}{p}]$ is \'{e}tale.
  \end{itemize}
\end{notation}

Our aim here is to show that the tower $\{S_n\}_{n\geq N}$ is a perfectoid tower for some $N\geq 0$. The key ingredient is the following proposition, which relies on ramification theory:

\begin{proposition}[Faltings]
\label{FaltingsLemma}
There exists a decreasing sequence $\{\delta_n\}_{n \ge 0}$ of positive rational numbers satisfying the following conditions.
  \begin{enumerate}
  \item $p^{\delta_n} \in W(k)[p^{\frac{1}{p^n}}]$.
  \item $\displaystyle \lim_{n \to \infty} \delta_n=0$.
  \item For every $n\geq 0$, the cokernel of $R_{n+1} \otimes_{R_n} S_n \to S_{n+1}$ is annihilated by $p^{\delta_n}$.
  \end{enumerate}
\end{proposition}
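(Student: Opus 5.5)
The plan is to use the trace-and-diagonal-idempotent description of étale algebras, combined with the almost purity theorem over the perfectoid ring $\wh{R_\infty}$. Since $R[\frac{1}{p}]\to S[\frac{1}{p}]$ is finite étale, the algebra $S[\frac{1}{p}]\otimes_{R[\frac{1}{p}]}S[\frac{1}{p}]$ contains a diagonal idempotent $e_0$. For each $n$, let $e_n$ be the image of $e_0$ in
\[
(S_n\otimes_{R_n}S_n)[\tfrac{1}{p}]=(R_n\otimes_R S)[\tfrac{1}{p}]\otimes_{R_n[\frac{1}{p}]}(R_n\otimes_R S)[\tfrac{1}{p}];
\]
it is the diagonal idempotent of the finite étale map $R_n[\frac{1}{p}]\to S_n[\frac{1}{p}]$. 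I would choose $\delta_n$ so that $p^{\delta_n}e_n$ lies in the image of $S_n\otimes_{R_n}S_n$, and then deduce statement (3) by a trace argument.

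\textbf{Step 1 (trace argument).} Suppose $p^{\delta}e_n=\sum_i a_i\otimes b_i$ with $a_i,b_i\in S_n$. The base change of $e_n$ to $R_{n+1}$ is the diagonal idempotent of $R_{n+1}[\frac{1}{p}]\to S_{n+1}[\frac{1}{p}]=R_{n+1}\otimes_{R_n}S_n[\frac{1}{p}]$. For any finite étale $A\to B$ with diagonal idempotent $\sum a_i\otimes b_i$, one has $x=\sum_i a_i\,\mathrm{Tr}_{B/A}(b_ix)$ for every $x\in B$. Applying this to $x\in S_{n+1}$ gives
\[
p^{\delta}x=\sum_i a_i\,\mathrm{Tr}(b_ix).
\]
Each $b_ix$ lies in $S_{n+1}$ and is integral over $R_{n+1}$, so $\mathrm{Tr}(b_ix)\in R_{n+1}[\frac{1}{p}]$ is integral over $R_{n+1}$. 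Since $R_{n+1}$ is normal (it is regular), $\mathrm{Tr}(b_ix)\in R_{n+1}$. Hence $p^{\delta}x$ lies in the image of $R_{n+1}\otimes_{R_n}S_n$, which gives (3).

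\textbf{Step 2 (existence of small exponents).} Let $S_\infty=\varinjlim S_n$. This is the integral closure of $R_\infty$ in $(R_\infty\otimes_RS)[\frac{1}{p}]$, since integral closure commutes with filtered colimits. By the (Witt-perfect) almost purity theorem over the perfectoid tower of \Cref{eg:RLRTower}, $R_\infty\to S_\infty$ is almost finite étale with respect to $(p^{1/p^\infty})$. In particular, for every rational $\varepsilon>0$ the element $p^{\varepsilon}e$ lies in the image of $S_\infty\otimes_{R_\infty}S_\infty$. Any such preimage is a finite sum, so it already comes from $S_m\otimes_{R_m}S_m$ for some $m$; after enlarging $m$ to kill the discrepancy after inverting $p$, we get that $p^{\varepsilon}e_m$ is integral. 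Integrality of $p^{\varepsilon}e_n$ then persists for all $n\geq m$, because $e_n$ is the image of $e_m$.

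For each $n$, I would define $\delta_n$ as the smallest element of $\frac{1}{p^n}\mathbb{Z}_{>0}$ that is at least the infimum of admissible exponents at level $n$, plus $\frac{1}{p^n}$ for safety. Admissibility is monotone in $n$, and the rounding ensures $p^{\delta_n}\in W(k)[p^{1/p^n}]$. This sequence can be taken decreasing, and Step 2 shows $\delta_n\to 0$. For small $n$, some integral multiple always exists because $p^Ne_0\in S\otimes_RS$ for $N\gg0$.

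\textbf{Main obstacle.} The delicate point is Step 2: extracting from almost purity an almost-integrality statement for $e$ in the non-completed tensor product $S_\infty\otimes_{R_\infty}S_\infty$, rather than only for the completed algebra $\wh{S_\infty}$ over $\wh{R_\infty}$. My first attempt would be to apply almost purity in the Witt-perfect form, which works directly with the non-complete $p$-root closed ring $R_\infty$. If that fails, I would pass through the completion using the Beauville--Laszlo type cartesian squares of \Cref{cor:BLlemma}. Those squares let one descend, for fixed $\varepsilon$, an integral approximation of $p^{\varepsilon}e$ modulo a high power of $p$ back to a finite level.
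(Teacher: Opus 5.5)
Your proof is correct, and it is essentially the Faltings argument that the paper takes from \cite[3.6. Proposition]{An06}; the paper gives no proof of its own. Almost purity for $R_\infty\to S_\infty$ puts $p^{\varepsilon}e_n$ in the image of $S_n\otimes_{R_n}S_n$ for $n\gg 0$, and the identity $p^{\delta}x=\sum_i a_i\,\mathrm{Tr}(b_ix)$ together with the normality of $R_{n+1}$ shows that $p^{\delta}$ kills $\Cok(R_{n+1}\otimes_{R_n}S_n\to S_{n+1})$; moreover, the completion issue you flag does not arise, since the Witt-perfect form of almost purity applies directly to the non-complete $p$-normal ring $R_\infty$ and gives the almost idempotent in the uncompleted $S_\infty\otimes_{R_\infty}S_\infty$.
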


For the proof, see \cite[3.6. Proposition]{An06}.

\begin{remark}
\label{rem:deltaN}
The condition $p^{\delta_n}\in W(k)[p^{\frac{1}{p^n}}]$ implies that $\delta_np^n\in\mathbb{Z}_{\geq 1}$. Indeed, since $(W(k)[p^{\frac{1}{p^n}}], (p^{\frac{1}{p^n}}))$ is a DVR, we can write $p^{\delta_n} = p^{\frac{m}{p^n}}u_n$ for some integer $m\geq 0$ and unit $u_n\in W(k)[p^{\frac{1}{p^n}}]^\times$. Then $p^{\delta_np^n} = p^m u_n^{p^n}$ and $u_n^{p^n}\in W(k)^\times$. Thus we get $\delta_np^n=m\in\mathbb{Z}$ because $(W(k),(p))$ is a DVR.
\end{remark}

For a ring $A$ and an integer $n>0$, we denote by $A^n$ the image of the $n$-th power map $(-)^n:A \to  A$. Hence $A^n$ is a multiplicative submonoid of $A$. The next result is due to Andreatta (see \cite[3.7 Corollary]{An06}). For ease of reference, we provide the proof.

\begin{corollary}
\label{p-Frobenius}
There exist a rational $\varepsilon \in (0,1) \cap \mathbb{Q}$ and an integer $N \ge 0$ such that $p^\varepsilon \in W(k)[p^{\frac{1}{p^N}}]$ and
$$
(S_{n+1})^p \subseteq S_n+p^\varepsilon S_{n+1}
$$
for all $n \ge N$.
\end{corollary}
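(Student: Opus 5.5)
The plan is to use \cref{FaltingsLemma} together with the fact that in the base tower the $p$-th power of an element of $R_{n+1}$ lies in $R_n$ modulo $p$.

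\textbf{Step 1: choice of $\varepsilon$ and $N$.} Since $\delta_n\to 0$ and the sequence is decreasing, choose $N$ with $\delta_N<1/p$. Put $\varepsilon \coloneqq 1-p\delta_N$, or more flexibly any rational $\varepsilon>0$ with $\varepsilon\le 1-p\delta_n$ for all $n\ge N$ and $p^\varepsilon\in W(k)[p^{1/p^N}]$. By \cref{rem:deltaN}, $p^N\delta_N\in\mathbb{Z}$, so $p^{N}(1-p\delta_N)\in\mathbb{Z}$. Hence $p^\varepsilon$ is a power of $p^{1/p^N}$, and $\varepsilon\in(0,1)$. Because $\delta_n\le \delta_N$, the inequality $p\delta_n\le 1-\varepsilon$ holds for all $n\ge N$.

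\textbf{Step 2: writing an element of $S_{n+1}$.} Take $x\in S_{n+1}$ with $n\ge N$. By \cref{FaltingsLemma}(3), $p^{\delta_n}x=\sum_k r_k s_k$ with $r_k\in R_{n+1}$ and $s_k\in S_n$, the sum taken in $S_{n+1}$ via $R_{n+1}\otimes_{R_n}S_n\to S_{n+1}$. Raise this to the $p$-th power:
\[
p^{p\delta_n}x^p=\Big(\sum_k r_ks_k\Big)^p \equiv \sum_k r_k^p s_k^p \pmod{pS_{n+1}}.
\]
In the tower of \cref{eg:RLRTower}, $r_k^p\in R_n+pR_{n+1}$, since Frobenius on $R_{n+1}/pR_{n+1}$ lands in $R_n/pR_n$; equivalently, $R_{n+1}=R_n[p^{1/p^{n+1}},x_i^{1/p^{n+1}}]$ and $(a+b)^p\equiv a^p+b^p$. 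So $\sum_k r_k^ps_k^p\in S_n+pS_{n+1}$, and therefore $p^{p\delta_n}x^p\in S_n+pS_{n+1}$.

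\textbf{Step 3: dividing by $p^{p\delta_n}$.} This is the main obstacle, since $p^{p\delta_n}$ need not lie in $S_n$ as a divisor and a congruence modulo $S_n$ cannot be naively divided. I would first try to avoid division altogether. Write $p^{\delta_n}x=y$ with $y$ in the image $T$ of $R_{n+1}\otimes S_n$. The difficulty is really getting from $p^{p\delta_n}x^p\in S_n+pS_{n+1}$ to $x^p\in S_n+p^{1-p\delta_n}S_{n+1}$, which needs the $S_n$-part to be divisible by $p^{p\delta_n}$ inside $S_n$.

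To handle this, work inside $S_n[1/p]$ using normality. Write $p^{p\delta_n}x^p=s+pz$ with $s\in S_n$ and $z\in S_{n+1}$. Then $s/p^{p\delta_n}=x^p-p^{1-p\delta_n}z$ lies in $S_{n+1}\cap S_n[1/p]$. That element is integral over $R_n$ and lies in $S_n[1/p]$, so by the description of $S_n$ as the integral closure of $R_n$ in $S_n[1/p]$ it lies in $S_n$. Here one needs $p^{1-p\delta_n}\in S_{n+1}$ and $p^{p\delta_n}\in R_{n+1}$; this holds because $p^{\delta_n}\in W(k)[p^{1/p^n}]\subset R_n$. Also needed is $S_{n+1}\cap S_n[1/p]\subseteq S_n$ inside $S_{n+1}[1/p]$, which requires $S_n[1/p]\to S_{n+1}[1/p]$ to be injective; this follows from flatness of $R_n\to R_{n+1}$ and the étale base change. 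Hence $x^p\in S_n+p^{1-p\delta_n}S_{n+1}\subseteq S_n+p^\varepsilon S_{n+1}$, using $1-p\delta_n\ge\varepsilon$.
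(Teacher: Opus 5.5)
Your proof is correct and follows the same route as the paper: raise the relation $p^{\delta_n}x\in$ image of $R_{n+1}\otimes_{R_n}S_n$ to the $p$-th power, use $(R_{n+1})^p\subseteq R_n+pR_{n+1}$, and then use that $S_n$ is integrally closed in $S_n[\frac{1}{p}]$ to absorb $\frac{1}{p^{p\delta_n}}S_n\cap S_{n+1}$ into $S_n$. The only differences are these. You reduce modulo $p$ rather than the paper's $p^{\frac{1}{p}}$, which gives the exponent $1-p\delta_n$ and $\varepsilon=1-p\delta_N$ (requiring only $\delta_N<\frac{1}{p}$) instead of $\varepsilon=\frac{1-p^2\delta_N}{p}$. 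Also, injectivity of $S_n[\frac{1}{p}]\to S_{n+1}[\frac{1}{p}]$ needs faithful flatness (purity) of $R_n\to R_{n+1}$, which holds since it is finite free with $1$ as a basis element; mere flatness does not suffice.
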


\begin{proof}
Notice that $(R_{n+1})^p+p^{\frac{1}{p}}R_{n+1}=R_n+p^{\frac{1}{p}}R_{n+1}$, which follows from the surjectivity of the Frobenius projection $R_{n+1}/pR_{n+1}\to R_n/pR_n$ (modulo $p^{\frac{1}{p}}$). Let $\{\delta_n\}_{n \ge 0}$ be the sequence taken as in \Cref{FaltingsLemma}. We have
\begin{align*}
p^{\delta_np}(S_{n+1})^p &\subseteq (R_{n+1}\otimes_{R_n}S_n)^p \\
&\subseteq (R_{n+1})^p\otimes_{R_n}S_n + p(R_{n+1}\otimes_{R_n}S_n) \\
&\subseteq (R_n+p^{\frac{1}{p}}R_{n+1})\otimes_{R_n}S_n + p(R_{n+1}\otimes_{R_n}S_n) \\
&\subseteq S_n + p^{\frac{1}{p}}(R_{n+1}\otimes_{R_n}S_n) 
\subseteq S_n + p^{\frac{1}{p}}S_{n+1}.
\end{align*}
Hence
\[
(S_{n+1})^p \subseteq \frac{1}{p^{\delta_np}}S_n + p^{\frac{1-\delta_np^2}{p}}S_{n+1} = \left(\frac{1}{p^{\delta_np}}S_n + p^{\frac{1-\delta_np^2}{p}}S_{n+1}\right)\cap S_{n+1}
\]
Since $p^n\delta_n \geq 1$ by \cref{rem:deltaN}, we have $p^{\frac{1-\delta_np^2}{p}} \in S_{n+1}$ for all $n\geq 3$. Thus
\[
(S_{n+1})^p \subseteq \frac{1}{p^{\delta_np}}S_n \cap S_{n+1} + p^{\frac{1-\delta_np^2}{p}}S_{n+1} \quad (n\geq 3).
\]
Take $x \in \frac{1}{p^{\delta_np}} S_n \cap S_{n+1}$. Since $S_n \hookrightarrow S_{n+1}$ is integral, we see that $x$ is integral over $S_n$. The normality of $S_n$ implies that $x \in S_n$ and we get
\begin{equation}
\label{p-powermap}
(S_{n+1})^p \subseteq S_n+p^{\frac{1-\delta_np^2}{p}} S_{n+1}.
\end{equation}
On the other hand, by \Cref{FaltingsLemma}, we get
$$
\lim_{n \to \infty} \frac{1-\delta_n p^2}{p}=\frac{1}{p}.
$$
So fix $N \gg 0$ so that $\varepsilon\coloneqq (1-\delta_Np^2)/p$ belongs to $(0,1) \cap \mathbb{Q}$. Then $(\ref{p-powermap})$ yields
$$
(S_{n+1})^p \subseteq S_n+p^{\frac{1-\delta_np^2}{p}} S_{n+1} \subseteq S_n+p^\varepsilon S_{n+1}
$$
for all $n \ge N$.
Finally, since $\varepsilon p^N = p^{N-1}-\delta_N p^{N+1} \in \mathbb{Z}$ by \cref{rem:deltaN}, it follows that $p^\varepsilon = (p^{\frac{1}{p^N}})^{\varepsilon p^N} \in W(k)[p^{\frac{1}{p^N}}]$.
\end{proof}

Now our main theorem of this section is stated as follows.

\begin{theorem}
\label{thm:Sn perfectoid}
There exists some $N' \geq N$ such that $\{S_n\}_{n\geq N'}$ is a perfectoid tower arising from $(S_{N'},(p^\varepsilon))$.
\end{theorem}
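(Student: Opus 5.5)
We will verify the axioms (a)--(g) of \cref{def:perfectoid tower} for the tower $\{S_n\}_{n\geq N'}$ with $I\coloneqq p^\varepsilon S_{N'}$, where $N'\geq N$ is chosen large enough (in particular so that $p^{\varepsilon/p}\in W(k)[p^{1/p^{N'+1}}]\subseteq R_{N'+1}$). The standing facts are these. Each $S_n$ is a Noetherian normal domain, finite over $R_n$ by \cref{prop:Sn normal}, hence $p$-torsion free. Each transition $S_n\to S_{n+1}$ is injective and integral.

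\textbf{Axioms (a), (b), (e), (g).} Axiom (a) holds by construction, since $p\in p^\varepsilon S_{N'}$ because $\varepsilon<1$. Axiom (g) is automatic because $S_n$ is $p$-torsion free, so the torsion parts vanish. Axiom (e) follows from \cref{lem:e}(2): $R_n$ is $p$-adically Zariskian, being complete local, and $S_n$ is integral over $R_n$. For axiom (b) we apply \cref{lem:injmod} with $f=p^\varepsilon$ to the integral injection $S_n\hookrightarrow S_{n+1}$; this uses that $S_n$ is normal, hence integrally closed in $S_n[\frac{1}{p}]$.

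\textbf{Axiom (c).} This is exactly \cref{p-Frobenius}: $(S_{n+1})^p\subseteq S_n+p^\varepsilon S_{n+1}$. Reducing modulo $p^\varepsilon S_{n+1}$ and using that $p\in p^\varepsilon S_{n+1}$, so that $p$-th powers are additive modulo $p^\varepsilon$, we see that Frobenius on $S_{n+1}/p^\varepsilon S_{n+1}$ lands in the image of $S_n/p^\varepsilon S_n$. By (b) it factors uniquely through a map $F_n$.

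\textbf{Axiom (f).} Take $I_1=(p^{\varepsilon/p})\subseteq S_{N'+1}$, so (f-1) holds. By \cref{lem:f-2}, condition (f-2) reduces to showing that $S_{n+1}$ is $p$-root closed in $S_{n+1}[\frac{1}{p}]$. This holds because $S_{n+1}$ is normal and any $p$-th root lying in the fraction ring is integral.

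\textbf{Axiom (d), the main obstacle.} We must show that $F_n\colon S_{n+1}/p^\varepsilon\to S_n/p^\varepsilon$ is surjective, i.e. that every $s\in S_n$ satisfies $s\equiv t^p \pmod{p^\varepsilon S_{n+1}}$ for some $t\in S_{n+1}$, with the congruence then pulled back into $S_n$ via (b). The plan is to use the almost surjectivity of Frobenius coming from Faltings' estimate together with the surjectivity on the base. From \cref{FaltingsLemma}, $p^{\delta_n}S_{n+1}\subseteq R_{n+1}\otimes_{R_n}S_n$, and symmetrically we would like $S_n$ to be almost generated by $p$-th powers. Concretely, I would consider the perfectoid completion $\wh{S_\infty}$. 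By the almost purity theorem it is almost finite étale over $\wh{R_\infty}$, hence (almost) perfectoid, so Frobenius on $\wh{S_\infty}/p$ is almost surjective. Combined with $\delta_n\to0$, this should show that for $n\gg0$ the cokernel of Frobenius $S_{n+1}/p^{\varepsilon}\to S_n/p^{\varepsilon}$ is killed by an arbitrarily small power $p^{\eta}$. Then a successive-approximation argument upgrades this to genuine surjectivity modulo $p^\varepsilon$: write $s=t_1^p+p^{\varepsilon'}s_1$, iterate, and use the $p$-root closedness and normality of $S_n$ to absorb the error terms into $p^\varepsilon S_{n+1}$. Choosing $N'$ so large that the $\delta_n$ for $n\geq N'$ are small relative to $\varepsilon$ and $1/p-\varepsilon$ is where the bookkeeping lies, and I expect this step, extracting honest surjectivity from almost statements, to be the hardest part.
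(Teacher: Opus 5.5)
\textbf{There is a genuine gap, in axiom (d).} Your treatment of axioms (a), (b), (c), (e), (f), (g) is the same as the paper's. One small correction: $S_n$ is a normal ring but in general not a domain. For example, take $R=\mathbb{Z}_p$, $S=\mathbb{Z}_p[p^{1/p}]$, $n=1$. This is harmless, since \cref{lem:injmod} and \cref{lem:f-2} only use that $S_n$ is integrally closed in $S_n[\frac{1}{p}]$.

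In (d), your sentence ``combined with $\delta_n\to 0$, this should show that for $n\gg0$ the cokernel of Frobenius $S_{n+1}/p^\varepsilon\to S_n/p^\varepsilon$ is killed by an arbitrarily small power $p^\eta$'' carries the whole content of the axiom. The inputs you name do not suffice for it. Surjectivity of Frobenius on $S_\infty/p$ (Witt-perfect almost purity) only produces $p$-th roots lying in some $S_m$ with $m$ possibly huge. To bring them back to $S_{n+1}$ you must control the accumulated discrepancy between $R_\infty\otimes_{R_n}S_n$ and $S_\infty$. That discrepancy is governed by the tail $\sum_{m\geq n}\delta_m$, and $\delta_n\to 0$ does not make this tail small; from $\delta_n\to0$ alone it need not even converge. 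The missing ingredient is Andreatta's bound $p^n\delta_n\leq c$ (\cref{thm:3.8}). It makes the tail geometric and yields \cref{cor:3.10}: $\Cok(R_\infty\otimes_{R_n}S_n\to S_\infty)$ is killed by $p^{cp/(p^n(p-1))}$.

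Even with this bound, you still need an actual descent mechanism, and the proposal does not supply one. $F_n$ is only semilinear over the Frobenius projection of the base, which makes a direct flat base change awkward. The paper instead proves surjectivity of the $R_N^{s.\flat}$-linear map $\Phi^{(N)}\colon S_N^{s.\flat}\to S_N/p^\varepsilon S_N$. Since $\Phi^{(N)}=F_N\circ\Phi^{(N)}_1$, this gives surjectivity of $F_N$, and the same argument works for every $n\geq N$. The steps are:
\begin{enumerate}
\item transport the Faltings estimate to the small tilts (\cref{lem:4.15}, which uses completeness of $S_n^{s.\flat}$ and \cref{lem:tensorLim});
\item base change along the faithfully flat map $R_N^{s.\flat}\to R_\infty^\flat$ (\cref{lem:A.7});
\item compare with the map $S_\infty^\flat\to S_\infty/p^\varepsilon S_\infty$, which is surjective by Witt-perfect almost purity;
\item apply the snake lemma to conclude that $\Cok\Phi^{(N)}$ is killed by $p^{c/p^N}$.
\end{enumerate}

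Your final upgrading step (normality or $p$-root closedness, plus iteration) is sound in spirit. It matches the paper's Claim (ii) together with the nilpotent-ideal trick, with $N$ enlarged so that $(N+1)\varepsilon\geq c$. However, it presupposes exactly the uniform finite-level almost surjectivity that your proposal leaves unproved.
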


It is not difficult to verify the axioms except (d): 

\begin{claim}
$\{S_n\}_{n\geq N}$ is a purely inseparable tower arising from $(S_N,(p^\varepsilon))$. Moreover, it satisfies the axioms {\rm (e)} $\sim$ {\rm (g)} in \cref{def:perfectoid tower}.
\end{claim}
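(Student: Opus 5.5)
We verify the axioms (a), (b), (c), (e), (f), (g) of \cref{def:perfectoid tower} for the tower $\{S_n\}_{n\geq N}$ with $I=(p^\varepsilon)$; throughout, all $S_n$ are normal Noetherian domains that are finite over $R_n$, and $p^\varepsilon\in W(k)[p^{1/p^N}]\subseteq R_N\subseteq S_N$ by \cref{p-Frobenius}.

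Axiom (a) holds because $p=(p^\varepsilon)^{1/\varepsilon}$ up to the exponent: since $\varepsilon<1$ we have $p\in p^\varepsilon S_N$. For (b), note that $S_n\hookrightarrow S_{n+1}$ is an injective integral extension, $p^\varepsilon$ is a non-zero-divisor, and $S_n$ is normal, hence integrally closed in $S_n[\frac{1}{p}]$. Therefore \cref{lem:injmod} gives injectivity of $S_n/p^\varepsilon S_n\to S_{n+1}/p^\varepsilon S_{n+1}$.

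For (c), \cref{p-Frobenius} gives $(S_{n+1})^p\subseteq S_n+p^\varepsilon S_{n+1}$ for $n\geq N$. Hence the Frobenius on $S_{n+1}/p^\varepsilon S_{n+1}$ lands in the image of $S_n/p^\varepsilon S_n$, because $p\in p^\varepsilon S_{n+1}$ makes Frobenius additive mod $p^\varepsilon$. Axiom (e) follows from \cref{lem:e}(2): $R_n$ is complete local, so it is $p$-adically Zariskian, and $S_n$ is finite over it. Since $\sqrt{p^\varepsilon S_n}=\sqrt{pS_n}$, being $p$-Zariskian and being $p^\varepsilon$-Zariskian coincide.

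Axiom (g) is trivial, since all $S_n$ are domains of characteristic zero and hence $p^\varepsilon$-torsion free; both torsion modules are $0$.

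For (f), take $I_1=(p^{\varepsilon/p})\subseteq S_{N+1}$. This requires $p^{\varepsilon/p}\in W(k)[p^{1/p^{N+1}}]\subseteq S_{N+1}$, which holds since $\varepsilon p^N\in\mathbb{Z}$. Then (f-1) is immediate.

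For (f-2) we use \cref{lem:f-2}: it suffices that each $S_{n+1}$ is $p$-root closed in $S_{n+1}[\frac{1}{p^{\varepsilon/p}}]=S_{n+1}[\frac1p]$. This holds because $S_{n+1}$ is normal, hence integrally closed in its fraction field, and a $p$-th root of an element of $S_{n+1}$ is integral over it.

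The only subtle point is the index bookkeeping. The tower starts at $N$, so the "$I_1$" lives in $S_{N+1}$, and we must check the membership $p^{\varepsilon/p}\in S_{N+1}$ and the hypotheses of \cref{lem:f-2} (non-zero-divisor, purely inseparable tower already established). I expect no serious obstacle here; the genuinely hard axiom (d), surjectivity of the Frobenius projections, is excluded from the claim.
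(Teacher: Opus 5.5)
Your verification is correct and follows the paper's proof axiom by axiom: \cref{lem:injmod} for (b), \cref{p-Frobenius} for (c), \cref{lem:e} for (e), $I_1=(p^{\varepsilon/p})$ with \cref{lem:f-2} for (f), and $p$-torsion freeness for (g); the only cosmetic difference is that you get (e) directly from the finite map $R_n\to S_n$ rather than via $R\to S\to S_n$. One slip to fix is that $S_n$ need not be a domain (for $S=R[p^{1/p}]$ the ring $(R_1\otimes_RS)[\frac{1}{p}]\cong R_1[\frac{1}{p}][T]/(T^p-p)$ splits off a factor $R_1[\frac{1}{p}]$, so $S_1$ has a nontrivial idempotent), so in (b), (f-2) and (g) you should argue, as the paper does, that $S_n$ is a $p$-torsion free normal ring (it lies in $(R_n\otimes_RS)[\frac{1}{p}]$ and is integrally closed in $S_n[\frac{1}{p}]$), which is all your arguments actually use.
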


\begin{proof}
(a) is trivial.

(b) follows from \cref{lem:injmod}, since each $S_n$ is a $p$-torsion free normal ring.

(c) is nothing but \cref{p-Frobenius}.

(e) Since $R$ is $p$-adically Zariskian and $R\to S$ is finite, $S$ is also $p$-adically Zariskian (\cref{lem:e}). For every $n\geq 0$, since $S\to S_n$ is finite, $S_n$ is also $p$-adically Zariskian (\cref{lem:e}). This is equivalent to $S_n$ being $p^\varepsilon$-adically Zariskian.

(f) Set $I_1\coloneqq p^{\frac{\varepsilon}{p}} S_{N+1}$. Then the condition (f-1) holds. Moreover, the condition (f-2) follows from \cref{lem:f-2} since every $S_n$ is a $p$-torsion free Noetherian normal ring.

(g) Since $S_n$ is $p$-torsion free for every $n\geq N$, (g) is satisfied automatically.
\end{proof}

We give the proof of axiom (d) in the subsequent \Cref{ss:d}. Before that, let us mention one application. Let
\[
p^\flat \coloneqq (p,p^{\frac{1}{p}},p^{\frac{1}{p^2}},\ldots) \in R_0^{s.\flat}
\]
and, by a slight abuse of notations, we denote by the same $p^\flat$ the image of it in $R_n^{s.\flat}$, $S_n^{s.\flat}$, $R_\infty^{s.\flat}$, $\wh{R_\infty^{s.\flat}}\cong R_\infty^\flat$, etc.

\begin{corollary}
\label{cor:B}
Keep the notation as above. Then for any $n\geq N'$ the following hold.
  \begin{enumerate}
  \item $R_n^{s.\flat}\to S_n^{s.\flat}$ is a finite extension.
  \item $R_n^{s.\flat}[\frac{1}{p^\flat}] \to S_n^{s.\flat}[\frac{1}{p^\flat}]$ is \'{e}tale.
  \item $S_n^{s.\flat}$ is a normal ring.
  \end{enumerate}
\end{corollary}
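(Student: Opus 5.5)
We work at the level of the completed colimits and then descend to the layers $n$. Put $R_\infty^\flat=(\wh{R_\infty})^\flat$ and $S_\infty^\flat=(\wh{S_\infty})^\flat$. The small tilts $R_n^{s.\flat}$ and $S_n^{s.\flat}$ embed into these perfect rings, and $p^\flat$ corresponds to $p$ under $\sharp$. The plan is to compare the extension $R_n^{s.\flat}\to S_n^{s.\flat}$ with the extension of perfectoid rings $\wh{R_\infty}\to\wh{S_\infty}$, using three tools:
\begin{itemize}
\item the isomorphism \eqref{eq:sharp1} $S_n^{s.\flat}/p^\flat S_n^{s.\flat}\cong S_n/pS_n$, after adjusting $I_0=(p^\varepsilon)$ versus $(p)$ by passing to the ideal $(p^\flat)$;
\item the almost purity theorem;
\item the tilting equivalence for finite \'etale algebras.
\end{itemize}

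\textbf{Step 1 (finiteness).} Since $R_n\to S_n$ is finite, $S_n/pS_n$ is a finite $R_n/pR_n$-module. By \eqref{eq:sharp1} applied to both towers, compatibly with the maps, $S_n^{s.\flat}/p^\flat$ is then finite over $R_n^{s.\flat}/p^\flat$. Here one should check that the map $R_n^{s.\flat}\to S_n^{s.\flat}$ is the one induced by the tower map $R_n\to S_n$ on the inverse limits of Frobenius projections.

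$R_n^{s.\flat}$ is a complete Noetherian local ring, namely a power series ring over $k$ as in \cref{eg:RLRTower}, with $p^\flat$ in its maximal ideal. $S_n^{s.\flat}$ is $p^\flat$-adically complete and separated, being an inverse limit of discrete rings of the form $S_{n+m}/p^\varepsilon$. Hence complete Nakayama gives that $S_n^{s.\flat}$ is finite over $R_n^{s.\flat}$. Injectivity follows from the injectivity of $R_n/p\to S_n/p$ at each level.

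\textbf{Step 2 (étaleness after inverting $p^\flat$).} By \cref{FaltingsLemma}, $\wh{R_\infty}\to\wh{S_\infty}$ is almost finite \'etale after inverting $p$; this is the (Witt-perfect) almost purity theorem. By the perfectoid--\'etale correspondence, $R_\infty^\flat[\frac{1}{p^\flat}]\to S_\infty^\flat[\frac{1}{p^\flat}]$ is finite \'etale.

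To descend to level $n$, consider the discriminant. $S_n[\frac1p]$ is finite \'etale over $R_n[\frac1p]$, so the discriminant ideal of $R_n\to S_n$ (take a trace form relative to finitely many generators) contains a power $p^c$. We aim to show that the discriminant of $R_n^{s.\flat}\to S_n^{s.\flat}$ contains a power of $p^\flat$.

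\begin{itemize}
\item First try: show that $S_n^{s.\flat}$ is generically free of the same rank $r$ as $S_n$ over $R_n$, and that $S_n^{s.\flat}\otimes R_\infty^\flat\to S_\infty^\flat$ is an almost isomorphism up to bounded $p^\flat$-torsion. The bound should be uniform, coming from $\sum\delta_m<\infty$-type estimates as in the proof of \cref{p-Frobenius}. Étaleness over $R_\infty^\flat[\frac1{p^\flat}]$ then descends to $R_n^{s.\flat}[\frac1{p^\flat}]$ by faithful flatness of the (colimit of) regular rings $R_m^{s.\flat}$ over $R_n^{s.\flat}$, after completion.
\item Fallback: use the fact that unramifiedness is detected by $\Omega$. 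The $p^\flat$-power torsion of $\Omega_{S_n^{s.\flat}/R_n^{s.\flat}}$ can be controlled through the Frobenius-projection description, since $S_{n+m}$ is obtained from $S_n$ by almost $p$-th roots. The flatness of $S_n^{s.\flat}[\frac1{p^\flat}]$ then needs the rank count.
\end{itemize}

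\textbf{Step 3 (normality).} $S_n$ is normal, hence completely integrally closed in $S_n[\frac1p]$. By \cref{tiltingintegral} applied to the $p^\varepsilon$-torsion free tower $\{S_n\}_{n\ge N'}$, the ring $S_n^{s.\flat}$ is completely integrally closed, hence integrally closed, in $S_n^{s.\flat}[\frac1{p^\flat}]$; note $(p^\varepsilon)^{s.\flat}$ and $p^\flat$ have the same radical. By Step 1, $S_n^{s.\flat}$ is Noetherian. By Step 2, $S_n^{s.\flat}[\frac1{p^\flat}]$ is \'etale over a regular ring, hence normal. Since $p^\flat$ is a nonzerodivisor, every element of $Q(S_n^{s.\flat})$ integral over $S_n^{s.\flat}$ lies in the normal ring $S_n^{s.\flat}[\frac1{p^\flat}]$, and therefore lies in $S_n^{s.\flat}$. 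Thus $S_n^{s.\flat}$ is a reduced Noetherian ring integrally closed in its total ring of fractions, hence normal.

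\textbf{Main obstacle.} The descent in Step 2 from the perfectoid level to the Noetherian layer $n$ is the main obstacle. The tilting correspondence only naturally gives information at $\infty$, so I expect the bulk of the work to be controlling $S_\infty^\flat$ against $R_\infty^\flat\otimes S_n^{s.\flat}$ via Faltings' estimates.
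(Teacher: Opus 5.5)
Your Steps 1 and 3 are essentially the paper's arguments for (1) and (3). For (1) that is the isomorphism $S_n^{s.\flat}/I_0^{s.\flat}S_n^{s.\flat}\cong S_n/p^\varepsilon S_n$ plus topological Nakayama; for (3) it is \cref{tiltingintegral} plus normality of $S_n^{s.\flat}[\frac1{p^\flat}]$. Your ``first try'' in Step 2 also has the architecture of the paper's proof of (2). You tilt the almost finite \'etale map $\wh{R_\infty}\to\wh{S_\infty}$ and descend along the faithfully flat map $R_n^{s.\flat}\to R_\infty^\flat$. (That map is almost finite \'etale by the Witt-perfect almost purity theorem, not by \cref{FaltingsLemma}.) But the one step with real content is left open. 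To descend, you must know that $R_\infty^\flat\otimes_{R_n^{s.\flat}}S_n^{s.\flat}\to S_\infty^\flat$ becomes an isomorphism after inverting $p^\flat$, and you only flag this as the main obstacle. Neither the discriminant idea nor the $\Omega$ fallback is carried out. Since Step 3 uses Step 2, (3) is not established either.

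The missing ingredient is Andreatta's uniform bound $p^n\delta_n\leq c$ (\cref{thm:3.8}). The estimate you point to would not do the job. The proof of \cref{p-Frobenius} only uses $\delta_n\to 0$, and even $\sum_m\delta_m<\infty$ is not the right shape. The reason is that $S_n^{s.\flat}$ is an inverse limit along Frobenius projections, and $(p^\flat)^a\in S_n^{s.\flat}$ has $m$-th component $p^{a/p^m}\in S_{n+m}/p^\varepsilon S_{n+m}$. Set $C_{n+m}=\Cok(R_{n+m+1}\otimes_{R_{n+m}}S_{n+m}\to S_{n+m+1})$. For a fixed power $(p^\flat)^a$ to kill $\varprojlim_m C_{n+m}/p^\varepsilon C_{n+m}$ by this levelwise argument, you need $p^{a/p^m}$ to kill $C_{n+m}$ for every $m$. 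That means $\delta_{n+m}\leq a/p^m$, i.e.\ boundedness of $p^m\delta_m$. With \cref{thm:3.8} this gives \cref{lem:4.15}(2). Telescoping as in \cref{cor:3.10}(2), together with \cref{lem:flat comp}(1), then gives \cref{lem:4.15}(3): the cokernel of $R_\infty^\flat\otimes_{R_n^{s.\flat}}S_n^{s.\flat}\to S_\infty^\flat$ is killed by $(p^\flat)^{cp/(p^n(p-1))}$. This bounded-torsion statement identifies the base change of $R_n^{s.\flat}[\frac1{p^\flat}]\to S_n^{s.\flat}[\frac1{p^\flat}]$ with $R_\infty^\flat[\frac1{p^\flat}]\to S_\infty^\flat[\frac1{p^\flat}]$. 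That map is finite \'etale by the perfectoid--\'etale correspondence, and faithfully flat descent then yields (2).
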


\begin{proof}
(1) follows from \cite[Lemma 3.39]{INS25} and topological Nakayama's lemma \cite[Theorem 8.4]{Mat86}.

(2) It suffices to show that the base change
\[
R_\infty^{s.\flat}[\tfrac{1}{p^\flat}] \to S_\infty^{s.\flat}[\tfrac{1}{p^\flat}]
\]
of $R_n^{s.\flat}[\frac{1}{p^\flat}] \to S_n^{s.\flat}[\frac{1}{p^\flat}]$ along the faithfully flat map $R_n^{s.\flat}[\frac{1}{p^\flat}] \to R_\infty^{s.\flat}[\frac{1}{p^\flat}]$ is finite \'{e}tale. By Witt-perfect almost purity theorem \cite[Theorem 5.9]{NS18} together with \cite[Theorem 5.7]{NS18}, the $p$-adically completion $\wh{R_\infty}\to\wh{S_\infty}$ is $(p)^{\frac{1}{p^\infty}}$-almost finite \'{e}tale. But $\wh{R_\infty}\to\wh{S_\infty}$ tilts to $\wh{R_\infty^{s.\flat}}\to\wh{S_\infty^{s.\flat}}$, which is $(p^\flat)^{\frac{1}{p^\infty}}$-almost finite \'{e}tale by perfectoid-\'{e}tale correspondence (see \cite[Theorem 7.4.5]{SW}). This implies what we wanted to prove.

(3) It follows from Theorem \ref{tiltingintegral} that $S_n^{s.\flat}$ is integrally closed in $S_n^{s.\flat}[\frac{1}{p^\flat}]$. But we deduce from (2) that $S_n^{s.\flat}[\frac{1}{p^\flat}]$ is normal (\cite[\href{https://stacks.math.columbia.edu/tag/0950}{Tag 0950}]{stacks-project}). Hence $S_n^{s.\flat}$ is normal, as desired.
\end{proof}

%%%%%%%%%%%%%%%%%%%%%%%%%%%%%%%%%%%%%%%%%%%%%%%%%%%%%%%
\subsection{Proof of the surjectivity of Frobenius projections}
\label{ss:d}

It remains to verify (d). 
The proof is essentially established in \cite[5.1 Theorem]{An06}; we provide a detailed proof for the reader's convenience.
We first need the following result, which again relies on ramification theory:

\begin{theorem}[{\cite[3.8. Theorem]{An06}}]
\label{thm:3.8}
The sequence $\{p^n\delta_n\}_{n\geq 0}$ is bounded. Hence there exists a constant $c=c(S)$ such that $p^n\delta_n\leq c$ for all $n\geq 0$.
\end{theorem}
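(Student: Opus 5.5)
The plan is to express $\delta_n$ through the different of $S_n$ over $R_n$. Then I would show that this different has $p$-adic valuation $O(p^{-n})$ by a computation at a single discrete valuation ring, where ramification theory applies. The bound $p^n\delta_n\leq c$ then amounts to saying that the ramification of the generic fibre extension is killed at a linear rate along the tower.

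\emph{Step 1: from cokernels to differents.} Since $R_n[\frac1p]\to S_n[\frac1p]$ is finite \'etale and $R_n$ is regular, the trace pairing identifies $S_n$ with a lattice inside $\mathfrak{D}_{S_n/R_n}^{-1}$, where $\mathfrak{D}$ is the Dedekind different. Here $\mathfrak{D}_{S_n/R_n}$ is a divisorial ideal supported on $V(p)$. Every element $x\in S_{n+1}$ is integral over $R_{n+1}\otimes_{R_n}S_n$ and lies in the \'etale algebra $(R_{n+1}\otimes_{R_n}S_n)[\frac1p]$. The standard trace argument therefore gives
\[
\mathfrak{D}_{S_n/R_n}\cdot S_{n+1}\subseteq R_{n+1}\otimes_{R_n}S_n \quad\text{(up to the image).}
\]
So one may take $p^{\delta_n}$ to be any element of $W(k)[p^{1/p^n}]$ lying in $\mathfrak{D}_{S_n/R_n}$. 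It therefore suffices to bound $v_p(\mathfrak{D}_{S_n/R_n})\leq c\,p^{-n}$. This requires making $\mathfrak{D}$ principal up to a power of $p$, which holds because its support is only the prime over $p$.

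\emph{Step 2: reduction to codimension one.} The only height-one prime of $R_n$ containing $p$ is $\mathfrak{p}_n=(p^{1/p^n})$. Since $S_n$ is normal, hence satisfies $(S_2)$, the different and the valuation bound can be checked after localizing at $\mathfrak{p}_n$ and completing. This reduces the problem to the complete discrete valuation rings $V_n=\widehat{(R_n)_{\mathfrak{p}_n}}$, whose residue fields $k((x_2,\dots,x_d))^{1/p^n}$ are imperfect. The corresponding finite \'etale extensions $L_n=L\otimes_K K_n$ are taken with normalization $W_n$. The tower $V_n$ is obtained by adjoining $p$-th roots of the uniformizer and of a $p$-basis, and $\widehat{V_\infty}$ is a perfectoid (Witt-perfect) valuation ring.

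\emph{Step 3: the ramification estimate (main obstacle).} We must show $v_p(\mathfrak{D}_{W_n/V_n})\leq c\,p^{-n}$. My first attempt would use the Abbes--Saito (logarithmic) upper ramification filtration, which works for imperfect residue fields. Let $r$ be the conductor of $L/K$. Base change along $K\to K_n$ has ramification index $p^n$ and is "fiercely" inseparable on residue fields. In upper numbering normalized by $v_p$, the filtration of $G_{K_n}$ is then obtained from that of $G_K$ with breaks divided by at least $p^n$. Hence every break of $L_n/K_n$ is $\leq r/p^n$.

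Bounding the different by a fixed multiple of the largest break would then give the claim. The constant depends only on $[L:K]$, via the standard inequality $v(\mathfrak{D})\leq (1-\frac{1}{[L:K]})\cdot(\text{highest break})$ suitably adapted.

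If the upper-numbering functoriality is unavailable, I would argue more concretely by d\'evissage to degree-$p$ Artin--Schreier/Kummer steps using a tower of subextensions. For a single such step, extracting a $p$-th root of $p$ and of the $p$-basis allows one to rewrite the defining equation so that the different's valuation is multiplied by $1/p$. This is Andreatta's explicit computation, and it is where the real difficulty lies.
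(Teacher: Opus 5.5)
\textbf{There is a genuine gap: Step 3, which carries the whole content of the theorem, is not proved.}

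Your Steps 1 and 2 are sound reductions. Flatness of $R_n\to R_{n+1}$ gives $(R_{n+1}\otimes_{R_n}S_n)^\vee=R_{n+1}\otimes_{R_n}S_n^\vee$ for trace duals. Normality of $R_{n+1}$ puts $S_{n+1}$ inside this dual, so $\mathfrak{D}_{S_n/R_n}S_{n+1}\subseteq R_{n+1}\otimes_{R_n}S_n$. Since the different is divisorial and $(p^{1/p^n})$ is the only height-one prime of $R_n$ containing $p$, membership $p^\delta\in\mathfrak{D}_{S_n/R_n}$ can be tested over the complete DVR $V_n$. But this only shows that the theorem follows from $v_p(\mathfrak{D}_{W_n/V_n})\le c\,p^{-n}$, and that estimate is exactly what Step 3 fails to establish.

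The base-change property you invoke there is not available, and as stated it is false. For an extension $K'/K$ of ramification index $e$, the general base-change result for the Abbes--Saito filtrations is the inclusion $G_{K'}^{ea}\subseteq G_K^{a}$, with equality known for tame base change. In $v_p$-normalization this only says that breaks do not increase; it gives no decay.

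Even for $d=1$ (perfect residue field) the breaks are not divided by $p^n$. There $G_{K_n}^{\psi_{K_n/K}(u)}=G_K^u\cap G_{K_n}$ (upper numbering normalized by $v_{K_n}$), so in $v_p$-units the breaks of $L_n/K_n$ are governed by $\psi_{K_n/K}(r)/p^n$. For instance, for $K_1=\mathbb{Q}_p(p^{1/p})$ one has $\psi_{K_1/\mathbb{Q}_p}(u)=pu-p$ for $u>p/(p-1)$. Hence a break $r>p/(p-1)$ of $L/\mathbb{Q}_p$ produces the break $r-1>r/p$ of $LK_1/K_1$. What rescues $d=1$ is that $\psi_{K_n/K}(r)$ stays bounded as $n\to\infty$ (the Kummer tower is arithmetically profinite). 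That argument rests on lower numbering and Herbrand functions, which have no counterpart in the Abbes--Saito theory. Yet $d\ge 2$, with imperfect residue fields and $K_n/K$ fiercely ramified, is precisely the case at hand. Likewise, your different--break inequality is classically a consequence of $v_K(\mathfrak{D}_{L/K})=\int_{-1}^{\infty}(1-|G^u|^{-1})\,du$, and ``suitably adapted'' hides work you have not done.

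The fallback does not close the gap either. The assertion that for a single degree-$p$ step the valuation of the different is multiplied by $1/p$ \emph{is} the geometric decay to be proved, stated rather than derived. Even granting it, the d\'evissage would not iterate. For an intermediate field $M$, the tower $MK_n$ is not obtained by extracting $p$-power roots of a uniformizer and a $p$-basis of $M$, so a one-step computation over $K$ says nothing about the next layer.

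Your outline therefore reduces the theorem to itself plus an appeal to Andreatta's computation. That appeal is all the paper does: it does not reprove the statement but imports it from \cite[3.8. Theorem]{An06}. To turn the outline into a proof you need an actual mechanism producing the factor $1/p$ between consecutive levels in the imperfect-residue-field setting. Inside this paper you cannot take it from surjectivity of the Frobenius projections of $\{S_n\}$, because axiom (d) for $\{S_n\}$ is deduced from this very theorem through the constant $c$. Also, a naive Frobenius comparison of discriminants at levels $n$ and $n+1$ based only on the inclusion of \cref{p-Frobenius} gives a lower bound for $v_p$ of the discriminant at level $n+1$, not the upper bound you need.
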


We prepare a few lemmas. The following is a corollary of \cref{thm:3.8}.

\begin{corollary}[{\cite[3.10. Corollary]{An06}}]
\label{cor:3.10}
Let $n\geq N$ be an integer.
  \begin{enumerate}
  \item The cokernel of $R_{n+1}\otimes_{R_n}S_n\hookrightarrow S_{n+1}$ is annihilated by $p^{\frac{c(S)}{p^n}}$.
  \item The cokernel of $R_\infty\otimes_{R_n}S_n\hookrightarrow S_\infty$ is annihilated by $p^{\frac{c(S)p}{p^n(p-1)}}$.
  \end{enumerate}
\end{corollary}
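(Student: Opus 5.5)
For (1), I will combine \cref{FaltingsLemma} with \cref{thm:3.8}. By \cref{FaltingsLemma}(3), the cokernel of $R_{n+1}\otimes_{R_n}S_n\to S_{n+1}$ is killed by $p^{\delta_n}$. \cref{thm:3.8} gives $\delta_n\le c(S)/p^n$, so $p^{c(S)/p^n}=p^{\delta_n}\cdot p^{c(S)/p^n-\delta_n}$ also kills it. The second factor has nonnegative exponent and lies in $R_\infty$. To keep it in $R_{n+1}$, I note that $p^n\delta_n$ is an integer by \cref{rem:deltaN}. Replacing $c(S)$ by an integer upper bound, the exponent $(c(S)-p^n\delta_n)/p^n$ has denominator $p^n$, so the factor lies in $R_n$.

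The map $R_{n+1}\otimes_{R_n}S_n\to S_{n+1}$ is injective for the following reason. $R_n\to R_{n+1}$ is finite free, since $R_{n+1}$ is free over $R_n$ with basis the monomials in $p^{1/p^{n+1}},x_i^{1/p^{n+1}}$ of exponents $<p$. Hence $R_{n+1}\otimes_{R_n}S_n$ is $p$-torsion free. After inverting $p$ it equals $S_{n+1}[\frac1p]$, because $S_m[\frac1p]=(R_m\otimes_RS)[\frac1p]$ for all $m$.

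For (2), I plan to telescope. For $m\ge n$, factor
\[
R_{m}\otimes_{R_n}S_n\cong R_m\otimes_{R_{m-1}}(R_{m-1}\otimes_{R_n}S_n)\to R_m\otimes_{R_{m-1}}S_{m-1}\to S_m .
\]
By induction, assume the cokernel of $R_{m-1}\otimes_{R_n}S_n\to S_{m-1}$ is killed by $a_{m-1}:=p^{\sum_{k=n}^{m-2}c/p^k}$. Since $R_{m-1}\to R_m$ is flat, the first arrow stays injective after base change, and its cokernel is still killed by $a_{m-1}$. The second arrow has cokernel killed by $p^{c/p^{m-1}}$ by part (1).

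The general fact I use is this: if $A\xrightarrow{\alpha} B\xrightarrow{\beta} C$ are maps with $\Cok\alpha$ killed by $x$ and $\Cok\beta$ killed by $y$, then $\Cok(\beta\alpha)$ is killed by $xy$. Indeed, for $z\in C$ we have $yz=\beta(b)$, and $xb\in\Im\alpha$. So the cokernel at level $m$ is killed by $p^{\sum_{k=n}^{m-1}c/p^k}$. The exponent is bounded by
\[
\sum_{k\ge n}\frac{c}{p^k}=\frac{c\,p}{p^n(p-1)}.
\]
The cokernel at every finite level is therefore killed by the single element $p^{cp/(p^n(p-1))}$, which lies in $R_\infty$ although not in any fixed $R_m$.

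Passing to the colimit over $m$, filtered colimits are exact, so $R_\infty\otimes_{R_n}S_n\to S_\infty:=\varinjlim S_m$ is injective. Its cokernel is the colimit of the level-$m$ cokernels, each killed by the uniform element, so the colimit is killed by it too.

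I expect the main obstacle to be bookkeeping rather than substance: the annihilators live in $R_\infty$ rather than in $R_m$, and the maps must be checked to be injective. Both are handled by torsion-freeness and flatness of the tower $\{R_n\}$. The hypothesis $n\ge N$ plays no role beyond fixing the indexing.
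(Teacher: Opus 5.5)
Your proposal is correct and is essentially the paper's argument: (1) is Faltings' lemma combined with $\delta_n\le c(S)/p^n$, and in (2) your cokernel-of-a-composite fact is exactly what the paper's $3\times 3$ diagram with the modules $C_{n,m}$ encodes, followed by the same geometric-series bound and passage to the colimit. One small slip: $p^{\frac{c(S)p}{p^n(p-1)}}$ is in general not an element of $R_\infty$ (its exponent lies in $\mathbb{Z}[\frac{1}{p}]$ only when $(p-1)\mid c(S)$, and then it already lies in a finite layer), so the conclusion---yours and the paper's alike---should be read as annihilation by every $p^{q}$ with $q\in\mathbb{Z}[\frac{1}{p}]$ and $q\geq \frac{c(S)p}{p^n(p-1)}$, which is exactly what your uniform bound on the partial sums gives.
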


\begin{proof}
(1) follows immediately from \cref{FaltingsLemma} and the inequality $\delta_n\leq \frac{c(S)}{p^n}$.

(2) For each $m\geq 0$, let $C_{n,m}\coloneqq \Cok(R_{n+m}\otimes_{R_n}S_n\hookrightarrow S_{n+m})$. Then we have the commutative diagram with exact rows and columns (where $C$ is defined as cokernels)
\[
\begin{tikzcd}
R_{n+2} \otimes_{R_{n+1}}(R_{n+1}\otimes_{R_n}S_n) \rar \dar["\cong"'] & R_{n+2} \otimes_{R_{n+1}}S_{n+1} \rar \dar & R_{n+2}\otimes_{R_{n+1}} C_{n,1} \rar \dar & 0 \\
R_{n+2}\otimes_{R_n}S_n \rar & S_{n+2} \rar \dar & C_{n,2} \rar \dar & 0 \\
 & C_{n+1,1} \rar \dar & C \rar \dar & 0 \\
 & 0 & 0.
\end{tikzcd}
\]
It follows from (1) that $p^{\frac{c(S)}{p^n}} C_{n,1}=0$ and $p^{\frac{c(S)}{p^{n+1}}} C_{n+1,1}=0$. Then by the exactness of the right-hand column, we have $p^{\frac{c(S)}{p^n}+\frac{c(S)}{p^{n+1}}} C_{n,2}=0$. Inductively, we deduce that for every $m\geq 0$ the cokernel $C_{n,m}=\Cok(R_{n+m}\otimes_{R_n}S_n\to S_{n+m})$ is annihilated by
\[
p^{\frac{c(S)}{p^{n+m-1}}+\cdots+\frac{c(S)}{p^n}} = p^{c(S)\left(\frac{1}{p^{n+m-1}}+\cdots+\frac{1}{p^n}\right)}.
\]
Since
\begin{align*}
\frac{1}{p^{n+m-1}}+\cdots+\frac{1}{p^n} = \frac{1+p+\cdots + p^{m-1}}{p^{n+m-1}} &= \frac{p^m-1}{p^{n+m-1}(p-1)} \\
&= \frac{p^m-1}{p^m}\cdot\frac{p}{p^{n}(p-1)}
\nearrow \frac{p}{p^{n}(p-1)} \quad (m\to\infty),
\end{align*}
we conclude that $\Cok(R_\infty\otimes_{R_n}S_n\to S_\infty)=\varinjlim\limits_{m\geq 0}C_{n,m}$ is annihilated by $p^{\frac{c(S)p}{p^n(p-1)}}$.
\end{proof}

\begin{lemma}[{cf.\ \cite[4.15. Lemma]{An06}}]
\label{lem:4.15}
Let $n\geq N$ be an integer.
  \begin{enumerate}
  \item $S_n^{s.\flat}$ is $(p^\flat)^\varepsilon$-adically complete.
  \item The cokernel of $R_{n+1}^{s.\flat}\otimes_{R_n^{s.\flat}}S_n^{s.\flat} \hookrightarrow S_{n+1}^{s.\flat}$ is annihilated by $(p^\flat)^{\frac{c(S)}{p^n}}$.
  \item The cokernel of $R_\infty^{\flat}\otimes_{R_n^{s.\flat}}S_n^{s.\flat} \to S_\infty^{\flat}$ is annihilated by $(p^\flat)^{\frac{c(S)p}{p^n(p-1)}}$.
  \end{enumerate}
\end{lemma}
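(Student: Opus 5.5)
We would prove the three assertions in order, treating $S_n^{s.\flat}$ as the inverse limit $\varprojlim_{F}S_{n+m}/p^\varepsilon S_{n+m}$. Since axiom (d) is not yet available, we use only the purely inseparable structure together with (e)--(g), and transfer the statements from Corollary \ref{cor:3.10} through the projections $\Phi^{(n)}_m$.

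For (1), we would follow the general argument for inverse quasi-perfections. First, the kernel of $\Phi^{(n)}_m$ contains $((p^\flat)^\varepsilon)^{p^m}$-multiples. Indeed, $(p^\flat)^\varepsilon$ has $m$-th component $p^{\varepsilon/p^m}$ modulo $p^\varepsilon$, and its $p^m$-th power maps to $0$ in the $m$-th component. Conversely, an element killed by $\Phi^{(n)}_{m'}$ for all $m'\le m+k$ should be divisible by $((p^\flat)^\varepsilon)^{p^{m}}$. To see this, use (f-2): the kernel of $F$ is generated by $p^{\varepsilon/p}$, and we lift divisibility step by step using $p$-torsion freeness. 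These two facts together say that the $(p^\flat)^\varepsilon$-adic filtration and the inverse-limit filtration $\{\Ker\Phi^{(n)}_m\}_m$ are cofinal. Since an inverse limit of discrete sets is complete for its limit topology, $S_n^{s.\flat}$ is $(p^\flat)^\varepsilon$-adically complete. This is essentially the argument of \cite[Lemma 3.39]{INS25} and should be citable, provided that lemma's proof does not use (d); we would check this.

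For (2), let $x=(x_m)\in S_{n+1}^{s.\flat}$, and set $a=p^{c(S)/p^n}$. By Corollary \ref{cor:3.10}(1) applied at level $n+m$, the element $p^{c(S)/p^{n+m}}x_m$ lies in the image of $R_{n+m+1}\otimes S_{n+m}$ modulo $p^\varepsilon$, at each level. The exponents are compatible with $F$, since the $p$-th power of $p^{c(S)/p^{n+m+1}}$ is $p^{c(S)/p^{n+m}}$. This is exactly the component sequence of $(p^\flat)^{c(S)/p^n}$, so multiplying $x$ by $(p^\flat)^{c(S)/p^n}$ multiplies $x_m$ by the correct root. The difficulty is that the chosen preimages at different levels need not be compatible under Frobenius projections. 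We would handle this by a Mittag-Leffler-type argument: at each level, the set of preimages in the finitely generated module $R_{n+m+1}/p^\varepsilon\otimes S_{n+m}/p^\varepsilon$ is a coset of a kernel. The $m$-th level finite tensor products are compatible via the Frobenius projections of $R$ and $S$, and these sets are nonempty. Alternatively, the $R^{s.\flat}$ side is explicit, since $R_{n+1}^{s.\flat}$ is free over $R_n^{s.\flat}$ with basis given by monomials in $p^{\flat 1/p^{n+1}},x_i^{\flat 1/p^{n+1}}$. With this, the tensor product is a finite free module and we can compare coordinates levelwise.

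For (3), we would argue as in Corollary \ref{cor:3.10}(2): iterate (2) along the tower, which gives the annihilator $(p^\flat)^{c(S)\sum_m p^{-(n+m)}}$ on the finite-level cokernels. We then pass to the colimit and to the completion, using (1) and that completion of a module annihilated by a fixed element preserves that annihilation, up to the image being dense and the target complete.

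The main obstacle is the compatibility of lifts in (2). It arises because tensor products do not commute with inverse limits, and the natural map $R_{n+1}^{s.\flat}\otimes S_n^{s.\flat}\to\varprojlim(R_{n+m+1}\otimes S_{n+m})/p^\varepsilon$ need not be surjective in general. We would resolve this via the finite freeness of $R_{n+1}^{s.\flat}$ over $R_n^{s.\flat}$, which reduces the problem to compatible coordinates.
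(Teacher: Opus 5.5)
\textbf{There is a gap in how you handle compatibility of lifts in (2); the overall route is the paper's.}

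For (1), the paper cites \cite[Proposition 3.10 (a)]{INS25}. That result concerns purely inseparable towers, so (d) is indeed not needed; cite it rather than \cite[Lemma 3.39]{INS25}, which the paper uses only for perfectoid towers.

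For (2), the paper takes the sequences of \cref{cor:3.10}(1), reduces them modulo $p^\varepsilon$, and passes to $\varprojlim_m$. It then uses finite freeness of $R_{n+1}^{s.\flat}$ over $R_n^{s.\flat}$, together with (1), to identify $\varprojlim_m A_m$ with $R_{n+1}^{s.\flat}\otimes_{R_n^{s.\flat}}S_n^{s.\flat}$, where $A_m\coloneqq(R_{n+m+1}/p^\varepsilon R_{n+m+1})\otimes_{R_{n+m}}S_{n+m}$.

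For (3), the paper iterates as in \cref{cor:3.10}(2). It also uses $R_\infty^\flat\otimes_{R_n^{s.\flat}}S_n^{s.\flat}\cong(R_\infty^{s.\flat}\otimes_{R_n^{s.\flat}}S_n^{s.\flat})^\wedge$ from \cref{lem:flat comp}(1). You should make this explicit; it requires $S_n^{s.\flat}$ to be finitely presented over $R_n^{s.\flat}$.

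The gap is in (2), in how you dispose of the obstacle you correctly isolate. Finite freeness only gives the identification of $\varprojlim_m A_m$; it does not make the levelwise lifts compatible.
\begin{itemize}
\item The lifts of $p^{c(S)/p^{n+m}}x_m$ form a coset of $K_m\coloneqq\Ker(A_m\to S_{n+m+1}/p^\varepsilon S_{n+m+1})$.
\item The snake lemma for multiplication by $p^\varepsilon$ on $0\to R_{n+m+1}\otimes_{R_{n+m}}S_{n+m}\to S_{n+m+1}\to C_{n+m}\to 0$ gives $K_m\cong C_{n+m}[p^\varepsilon]$, the $p^\varepsilon$-torsion of $C_{n+m}$.
\item This is nonzero as soon as $C_{n+m}\neq 0$, for example for $R=\mathbb{Z}_p$, $S=\mathbb{Z}_p[\zeta_p]$, $p$ odd.
\end{itemize}
So comparing coordinates levelwise leaves exactly the ambiguity you worried about. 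For $d\ge 2$ there is also no evident Mittag-Leffler property for $\{K_m\}$. These modules are not artinian, and the transition maps involve the Frobenius projections of the $S$-tower. Their surjectivity is axiom (d), which this lemma is used to prove. The paper's write-up passes over the same point: it only records that the limit sequence is a complex.

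What closes the gap is to use \emph{canonical} coordinates over $S_{n+m}[\frac1p]$ and absorb the error with Frobenius.
\begin{itemize}
\item Let $e^{(m)}_\alpha$ be the monomial basis of $R_{n+m+1}$ over $R_{n+m}$. Then $(e^{(m+1)}_\alpha)^p=e^{(m)}_\alpha$, and $e^\flat_\alpha\coloneqq(e^{(m)}_\alpha)_m$ is a basis of $R_{n+1}^{s.\flat}$ over $R_n^{s.\flat}$.
\item Since $S_{n+m+1}[\frac1p]=\bigoplus_\alpha e^{(m)}_\alpha S_{n+m}[\frac1p]$, \cref{cor:3.10}(1) gives $S_{n+m}$-linear maps $\lambda^{(m)}_\alpha\colon S_{n+m+1}\to S_{n+m}$ with $\sum_\alpha e^{(m)}_\alpha\lambda^{(m)}_\alpha(z)=p^{c(S)/p^{n+m}}z$. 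These descend modulo $p^\varepsilon$.
\item Expand $\bigl(\sum_\alpha e^{(m+1)}_\alpha\lambda^{(m+1)}_\alpha(z)\bigr)^p$ and use $S_{n+m+1}\cap p^\varepsilon S_{n+m+2}=p^\varepsilon S_{n+m+1}$ (\cref{lem:injmod}). This shows that $\bar\lambda^{(m)}_\alpha\circ F$ and $F\circ\bar\lambda^{(m+1)}_\alpha$ differ by elements of $p^{\varepsilon-c(S)/p^{n+m}}S_{n+m}/p^\varepsilon S_{n+m}$, when $c(S)/p^{n+m}\le\varepsilon$.
\item $F^k$ kills these errors once $p^k(\varepsilon-c(S)/p^{n+k})\ge\varepsilon$.
\end{itemize}
For such $k$, set $s_\alpha\coloneqq\bigl(F^k(\bar\lambda^{(m+k)}_\alpha(x_{m+k}))\bigr)_{m\ge0}$. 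This is an element of $S_n^{s.\flat}$ with $\sum_\alpha e^\flat_\alpha s_\alpha=(p^\flat)^{c(S)/p^n}x$. That is (2) with the stated exponent, with no exactness of limits required.
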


\begin{proof}
(1) %Replacing $N$ by $n$, we may assume $n=N$.
Since $(p^\flat)^{p^n\varepsilon} \in \Ker(\Phi^{(n)}_0)$, the assertion follows from \cite[Proposition 3.10 (a)]{INS25}.

(2) For every $m\geq 0$, \cref{cor:3.10} (1) shows that we have an exact sequence
\[
0\to R_{n+m+1}\otimes_{R_{n+m}}S_{n+m} \to S_{n+m+1} \to C_{n+m} \to 0
\]
with $p^{\frac{c(S)}{p^{n+m}}}\cdot C_{n+m}=0$. By reducing modulo $p^\varepsilon$, we have an exact sequence of $R_{n+m+1}/p^\varepsilon R_{n+m+1}$-modules
\[
(R_{n+m+1}/p^\varepsilon R_{n+m+1})\otimes_{R_{n+m}}S_{n+m} \to S_{n+m+1}/p^\varepsilon S_{n+m+1} \to C_{n+m}/p^\varepsilon C_{n+m} \to 0,
\]
which is compatible with Frobenius projections. By taking projective limits, we obtain a complex of $R_n^{s.\flat}$-modules
\[
\varprojlim_m\left((R_{n+m+1}/p^\varepsilon R_{n+m+1})\otimes_{R_{n+m}}S_{n+m}\right) \to S_{n+1}^{s.\flat} \to \varprojlim_m\left(C_{n+m}/p^\varepsilon C_{n+m}\right),
\]
where the right term is annihilated by $(p^\flat)^{\frac{c(S)}{p^n}}$. It remains to show that there exists a canonical isomorphism
\[
R_{n+1}^{s.\flat} \otimes_{R_n^{s.\flat}} S_n^{s.\flat} \cong \varprojlim_m\left((R_{n+m+1}/p^\varepsilon R_{n+m+1})\otimes_{R_{n+m}}S_{n+m}\right).
\]
Since $R_{n+1}^{s.\flat}$ is a finitely generated free $R_n^{s.\flat}$-module, (1) implies that $R_{n+1}^{s.\flat}\otimes_{R_n^{s.\flat}}S_n^{s.\flat}$ is $(p^\flat)^{\varepsilon}$-adically complete. Hence we have canonical maps
%\begin{align*}
%R_{n+1}^{s.\flat}\otimes_{R_n^{s.\flat}}S_n^{s.\flat}
%&\xr{\cong} \varprojlim_m \left(\frac{R_{n+1}^{s.\flat}}{(p^\flat)^{p^m\varepsilon}R_{n+1}^{s.\flat}} \otimes_{\frac{R_{n}^{s.\flat}}{(p^\flat)^{p^m\varepsilon}R_n^{s.\flat}}} \frac{S_n^{s.\flat}}{(p^\flat)^{p^m\varepsilon}}\right) \\
%&\xr{\cong} \varprojlim_m \left(\frac{R_{n+m+1}}{p^\varepsilon R_{n+m+1}} \otimes_{\frac{R_{n+m}}{p^\varepsilon R_{n+m}}} \frac{S_n^{s.\flat}}{(p^\flat)^{p^m\varepsilon}}\right) \\
%&\cong \varprojlim_m \left(R_{n+m+1} \otimes_{R_{n+m}} \frac{S_n^{s.\flat}}{(p^\flat)^{p^m\varepsilon}}\right) 
%\hookrightarrow \varprojlim_m\left(R_{n+m+1} \otimes_{R_{n+m}} \frac{S_{n+m}}{p^\varepsilon S_{n+m}}\right),
%\end{align*}
\begin{align*}
R_{n+1}^{s.\flat}\otimes_{R_n^{s.\flat}}S_n^{s.\flat}
&\xr{\cong} \varprojlim_m \left(R_{n+1}^{s.\flat} \otimes_{R_{n}^{s.\flat}} \frac{S_n^{s.\flat}}{(p^\flat)^{p^m\varepsilon}S_n^{s.\flat}}\right) \\
&\xr{(\ast)} \varprojlim_m \left(R_{n+1}^{s.\flat} \otimes_{R_{n}^{s.\flat}} \frac{S_{n+m}}{p^\varepsilon S_{n+m}}\right) \\
&\xr{\cong} \varprojlim_m \left(\frac{R_{n+1}^{s.\flat}}{(p^\flat)^{p^m\varepsilon}R_{n+1}^{s.\flat}} \otimes_{\frac{R_{n}^{s.\flat}}{(p^\flat)^{p^m\varepsilon}R_n^{s.\flat}}} \frac{S_{n+m}}{p^\varepsilon S_{n+m}}\right) \\
&\xr{\cong} \varprojlim_m \left(\frac{R_{n+m+1}}{p^\varepsilon  R_{n+m+1}}\otimes_{\frac{R_{n+m}}{p^\varepsilon R_{n+m}}} \frac{S_{n+m}}{p^\varepsilon S_{n+m}}\right) \cong \varprojlim_m \left(\frac{R_{n+m+1}}{p^\varepsilon  R_{n+m+1}}\otimes_{R_{n+m}} S_{n+m}\right),
\end{align*}
where the map $(\ast)$ is the map induced by the $m$-th projection $\Phi^{(n)}_{n+m}\colon S_n^{s.\flat}\to S_{n+m}/p^\varepsilon S_{n+m}$. Since $R_{n+1}^{s.\flat}$ is a finitely generated free $R_n^{s.\flat}$-module, we conclude that $(\ast)$ is an isomorphism (see \cref{lem:tensorLim} for more general situation).\footnote{The injectivity of $S_n^{s.\flat}/(p^\flat)^{p^m\varepsilon}S_n^{s.\flat}\to S_{n+m}/p^\varepsilon S_{n+m}$ can be also checked by using the normality of $S_{m+i}$ for $i\geq 0$ (see the proof of \cite[4.5. Proposition (1)]{An06}).}

(3) follows from (2) by an argument similar to that of the proof of \cref{cor:3.10} (2), once we notice the canonical isomorphism
\[
R_\infty^\flat \otimes_{R_n^{s.\flat}} S_n^{s.\flat} \xr{\cong} (R_\infty^{s.\flat}\otimes_{R_n^{s.\flat}}S_n^{s.\flat})^\wedge
\]
obtained by \cref{lem:flat comp} (1), where $(-)^\wedge$ denotes the $(p^\flat)^\varepsilon$-adic completion.
\end{proof}

Now we finish the proof of \cref{thm:Sn perfectoid} by checking (d).

\begin{proof}[Proof of \cref{thm:Sn perfectoid}]
We replace, if necessary, $N$ by a larger number so that $(N+1)\varepsilon \geq c$. Then it is enough to show that the $0$-th projection $\Phi^{(N)}\colon S_N^{s.\flat} \to S_N/p^\varepsilon S_N$ is surjective.
In general, if $f\colon A\to B$ is a ring homomorphism and $I\subseteq A$ is an ideal with $I^nB=(0)$ for some $n>0$, then the surjectivity of the composition $A\xr{f}B\to B/IB$ implies that of $f$.\footnote{Observe that $B \subseteq f(A)+IB \subseteq f(A)+I^2B \subseteq \cdots \subseteq f(A)+I^nB=f(A)$.}
Hence it suffices to show that the composition $S_N^{s.\flat} \xr{\Phi^{(N)}} S_N/p^\varepsilon S_N \to S_N/p^{\varepsilon-\frac{c}{p^N}}S_N$ is surjective. In view of the commutative diagram
\[
\begin{tikzcd}
S_N^{s.\flat} \rar["\Phi^{(N)}"] \dar["\cong","(p^\flat)^{c}"'] & S_N/p^{\varepsilon}S_N \rar & S_N/p^{\varepsilon-\frac{c}{p^N}}S_N \dar["\cong"',"p^{\frac{c}{p^N}}"] \\
(p^\flat)^{c}S_N^{s.\flat} \ar[rr,"\Phi^{(N)}",dashed] \dar[phantom,"\cap"] & & p^{\frac{c}{p^N}}S_N/p^\varepsilon S_N \dar[phantom,"\cap"] \\
S_N^{s.\flat} \ar[rr,"\Phi^{(N)}"'] & & S_N/p^{\varepsilon}S_N,
\end{tikzcd}
\]
we are reduced to checking that the dashed map is surjective. It is enough to prove the following.

\begin{claim}
  \begin{enumroman}
  \item $p^{\frac{c}{p^N}}S_N/p^\varepsilon S_N$ is in the image $\Phi^{(N)}(S_N^{s.\flat})$.
  \item If $x_N\ \mathrm{mod}\ p^\varepsilon S_N\in p^{\frac{c}{p^N}}S_N/p^\varepsilon S_N$ is the image of $x=(x_m\ \mathrm{mod}\ p^\varepsilon S_m)_{m\geq N} \in S_N^{s.\flat}$, then $x\in (p^\flat)^{c}S_N^{s.\flat}$.
  \end{enumroman}
\end{claim}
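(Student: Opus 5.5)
The plan is to prove (ii) first, since it is a root-closedness and normality argument inside the tower, and then to prove (i) using the almost purity input recorded in \cref{cor:3.10} and \cref{lem:4.15}.

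\emph{Plan for (ii).} Let $x=(x_m \bmod p^\varepsilon S_m)_{m\geq N}\in S_N^{s.\flat}$ with $x_N\in p^{\frac{c}{p^N}}S_N+p^\varepsilon S_N$.
\begin{enumerate}
\item \emph{Divisibility of each component.} Compatibility of the Frobenius projections gives $x_m^{p^{m-N}}\equiv x_N$ modulo $p^\varepsilon S_m$, so
\[
x_m^{p^{m-N}}\in p^{\frac{c}{p^N}}S_m+p^\varepsilon S_m .
\]
Each $S_m$ is a $p$-torsion free Noetherian normal domain (\cref{prop:Sn normal}), hence $p$-root closed in $S_m[\frac1p]$. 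Working with the valuations at the height-one primes over $p$ (equivalently, iterating the argument of \cref{lem:f-2}), I want to deduce $x_m\in p^{\frac{c}{p^m}}S_m+p^{\varepsilon/p^{m-N}}S_m$. This is what is needed to divide $x_m$ by the $m$-th component $p^{c/p^m}$ of $(p^\flat)^c$.
\item \emph{Constructing the quotient.} Lift $x_{m+1}=p^{\frac{c}{p^{m+1}}}y_{m+1}+(\text{error})$ with $y_{m+1}\in S_{m+1}$, and set
\[
z_m\coloneqq y_{m+1}^p \bmod p^\varepsilon S_m .
\]
The identity $(a+p^\delta b)^p\equiv a^p \bmod p^{\min(p\delta,1+\delta)}$ shows that $z_m$ is well defined modulo $p^\varepsilon$ once $p\delta\geq\varepsilon$, where $\delta$ is the precision of the division. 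This is where the normalization $(N+1)\varepsilon\geq c$, chosen at the start of the proof of \cref{thm:Sn perfectoid}, should enter: it makes the loss $c/p^{m+1}$ small relative to $\varepsilon$. Using the next index $m+1$ to define the $m$-th component is what recovers the precision lost by the division.
\item \emph{Conclusion.} Check that $(z_m)_{m\geq N}$ is compatible under the $F_m$ (here I use that $S_{m+1}^p\subseteq S_m+p^\varepsilon S_{m+1}$, from \cref{p-Frobenius}), so $z\in S_N^{s.\flat}$. Then check $(p^\flat)^c z=x$, which follows since the $m$-th component of $(p^\flat)^c$ is $p^{c/p^m}$.
\end{enumerate}

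\emph{Plan for (i).} Fix $s\in S_N$; I want an element of $S_N^{s.\flat}$ whose $0$-th component is $p^{\frac{c}{p^N}}s$ modulo $p^\varepsilon$.
\begin{enumerate}
\item \emph{Lift over the perfectoid completion.} By the Witt-perfect almost purity theorem (as used in \cref{cor:B}), $\wh{S_\infty}$ is perfectoid, so $S_\infty^\flat\to S_\infty/p^\varepsilon S_\infty$ is surjective. Choose $y\in S_\infty^\flat$ lifting $s$.
\item \emph{Push into the tensor product.} By \cref{lem:4.15}(3), $(p^\flat)^{e}y$ lies in the image of $R_\infty^\flat\otimes_{R_N^{s.\flat}}S_N^{s.\flat}$, where $e=\frac{cp}{p^N(p-1)}$.
\item \emph{Project back to level $N$.} $R_\infty^\flat$ is the completed free $R_N^{s.\flat}$-module on monomials in $p^\flat,x_2^\flat,\ldots$ with exponents in $\frac{1}{p^\infty}\mathbb Z\cap[0,1)$ relative to level $N$. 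Let $\pi$ be the projection onto the summand $R_N^{s.\flat}$. This $\pi$ is compatible, via $\Phi$ and the analogous monomial projection, with $R_\infty/p^\varepsilon\to R_N/p^\varepsilon$. Apply $\pi\otimes\mathrm{id}$ and compare $0$-th components. Using \cref{cor:3.10}(2), so that $S_\infty$ is almost $R_\infty\otimes_{R_N}S_N$, this gives an element of $S_N^{s.\flat}$ whose $0$-th component is $p^{e}s$ modulo $p^\varepsilon$ and a small error.
\item \emph{Fix the exponent.} The exponent $e$ exceeds $c/p^N$. I would remedy this by running the argument at level $N+1$ (or higher) and then applying the Frobenius projection down to level $N$. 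Since $R_{N+k}$ is free over $R_N$ and the constants scale by $p^{-k}$, this reduces the exponent to at most $c/p^N$, up to the same error absorbed by the choice $(N+1)\varepsilon\geq c$.
\end{enumerate}

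The main obstacle is (i), specifically the bookkeeping of exponents: showing that the monomial projection really lands in $S_N^{s.\flat}$, and that the exponent can be brought down to exactly $c/p^N$. If the projection step fails, I would instead build the compatible system by hand, lifting successive $p$-th roots through \cref{cor:3.10}(1) and summing the geometric series of losses as in the proof of \cref{cor:3.10}(2).
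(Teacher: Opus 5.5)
You prove (ii) correctly, and steps 1--3 of (i) are a valid alternative to the paper's route. The gap is step 4 of (i): it cannot bring the exponent down to the stated $c/p^N$.

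\textbf{Part (ii).} Your argument is more careful than the paper's write-up. The naive quotients $y_m=x_m/p^{c/p^m}$ only satisfy $y_{m+1}^p\equiv y_m$ modulo $p^{\varepsilon-c/p^m}S_{m+1}$. Taking $y_{m+1}^p$ as the $m$-th component is exactly what restores precision; the paper's $(y_m \bmod p^\varepsilon S_m)$ skips this point. Two clarifications:
\begin{itemize}
\item Since $c\le p^N\varepsilon$, we have $p^{c/p^N}S_m+p^\varepsilon S_m=p^{c/p^N}S_m$. So your step 1 actually gives exact divisibility $x_m\in p^{c/p^m}S_m$; an error of size $p^{\varepsilon/p^{m-N}}$ would be too coarse for step 2.
\item The precision in step 2 is $\delta=\varepsilon-c/p^{m+1}$, coming only from the choice of lift of $x_{m+1}$. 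Then $p\delta\ge\varepsilon$ is the inequality $(p-1)p^m\varepsilon\ge c$.
\end{itemize}

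\textbf{Part (i), steps 1--3.} The paper base-changes $C=\Cok(\Phi^{(N)})$ along the faithfully flat map $R_N^{s.\flat}\to R_\infty^\flat$ and applies the snake lemma. You instead use an explicit lift and the $R_N^{s.\flat}$-linear monomial retraction $\pi$, compatible with $R_\infty/p^\varepsilon R_\infty\to R_N/p^\varepsilon R_N$. This works. It also correctly tracks the kernel of $(S_N\otimes_{R_N}R_\infty)/p^\varepsilon\to S_\infty/p^\varepsilon S_\infty$, which is nonzero whenever $S_\infty\neq S_N\otimes_{R_N}R_\infty$; the paper's diagram treats this map as injective. By \cref{cor:3.10}(2) that kernel is killed by $p^\eta$ with $\eta=cp/(p^N(p-1))$. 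You absorb it by multiplying by one more such factor, not through the choice of $N$. So steps 1--3 prove
$p^{e+\eta}(S_N/p^\varepsilon S_N)\subseteq\Phi^{(N)}(S_N^{s.\flat})$, with $e+\eta=2cp/(p^N(p-1))$.

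\textbf{The gap in step 4.} Under the truncation isomorphism $S_N^{s.\flat}\cong S_{N+k}^{s.\flat}$ one has $\Phi^{(N)}(S_N^{s.\flat})=F^{(k)}\bigl(\Phi^{(N+k)}(S_{N+k}^{s.\flat})\bigr)$, where $F^{(k)}$ is the $p^k$-th power map.
\begin{itemize}
\item A factor $p^{a/p^k}$ at level $N+k$ becomes $p^{a}$ at level $N$. The gain from the constants scaling by $p^{-k}$ is exactly undone.
\item $F^{(k)}$ is not known to be surjective; that is axiom (d), the very thing being proved. So you would only reach $p^{a}F^{(k)}(S_{N+k}/p^\varepsilon S_{N+k})$, not all of $p^a(S_N/p^\varepsilon S_N)$.
\item Your fallback of summing the losses from \cref{cor:3.10}(1) again gives $\sum_{n\ge N}c/p^n=cp/(p^N(p-1))>c/p^N$.
\end{itemize}

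The available estimates never give better than $cp/(p^N(p-1))$. The paper's own passage from ``$(p^\flat)^c$ kills $C'$'' to ``$p^{c/p^N}$ kills $C\otimes R_\infty^\flat$'' silently switches the normalization of $p^\flat$. In \cref{lem:4.15}, $p^\flat$ is the image of $(p,p^{1/p},\dots)\in R_0^{s.\flat}$, with component $p^{1/p^i}$ at level $N+i$. In the final diagram its component at level $m$ is $p^{1/p^m}$.

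\textbf{Repair.} Change the constant, not the level:
\begin{itemize}
\item Set $c'=2\lceil cp/(p-1)\rceil$, so that $e$ and $\eta$ can be rounded up to $c'/(2p^N)\in p^{-N}\mathbb{Z}$.
\item Prove (i) in the form $p^{c'/p^N}(S_N/p^\varepsilon S_N)\subseteq\Phi^{(N)}(S_N^{s.\flat})$.
\item Choose $N$ with $(N+1)\varepsilon\ge c'$.
\item Run your (ii) and the reduction diagram with $c'$ in place of $c$.
\end{itemize}
Nothing else in the proof of \cref{thm:Sn perfectoid} depends on the particular value of this constant.
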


\begin{pfclaim}
(i) We show that $p^{\frac{c}{p^N}}$ annihilates the cokernel $C\coloneqq \Cok(\Phi^{(N)}\colon S_N^{s.\flat} \to S_N/p^\varepsilon S_N)$. It suffices to prove this after base change via the faithfully flat morphism $R_N^{s.\flat} \to R_\infty^\flat$ (cf.\ \cref{lem:A.7}). By \cref{lem:4.15} (3), the cokernel
\[
C'\coloneqq\Cok(S_N^{s.\flat}\otimes_{R_N^{s.\flat}}R_\infty^\flat \to S_\infty^\flat)
\]
is annihilated by $(p^\flat)^c$.
On the other hand, since
\[
(R_N/p^\varepsilon R_N) \otimes_{R_N^{s.\flat}}R_\infty^\flat
\cong R_N^{s.\flat}/(p^\flat)^\varepsilon R_N^{s.\flat} \otimes_{R_N^{s.\flat}}R_\infty^\flat
\cong R_\infty^{s.\flat}/(p^\flat)^\varepsilon R_\infty^{s.\flat}
\cong R_\infty/p^\varepsilon R_\infty,
\]
we get a map
\begin{align*}
(S_N/p^\varepsilon S_N)\otimes_{R_N^{s.\flat}}R_\infty^\flat &\cong S_N \otimes_{R_N} (R_N/p^\varepsilon R_N) \otimes_{R_N^{s.\flat}}R_\infty^\flat \\
&\cong S_N\otimes_{R_N} (R_\infty/p^\varepsilon R_\infty) \\
&\cong (S_N\otimes_{R_N} R_\infty)\otimes_{R_\infty}(R_\infty/p^\varepsilon R_\infty) \\
&\to S_\infty\otimes_{R_\infty}(R_\infty/p^\varepsilon R_\infty) 
\cong S_\infty/p^\varepsilon S_\infty.
\end{align*}

These identifications fit into the commutative diagram with exact rows
\[
\begin{tikzcd}
0 \rar & S_N^{s.\flat} \otimes_{R_N^{s.\flat}} R_\infty^\flat \rar \dar["\Phi^{(N)}\otimes 1"'] & S_\infty^\flat \rar \dar["\Phi^{(N)}"'] & C' \rar \dar & 0 \\
0 \rar & (S_N/p^\varepsilon S_N)\otimes_{R_N^{s.\flat}}R_\infty^\flat \rar & S_\infty/p^\varepsilon S_\infty \rar & C'' \rar & 0,
\end{tikzcd}
\]
where we define $C''$ as the cokernel. Since the middle vertical arrow is surjective by Witt-perfect almost purity theorem \cite[Theorem 5.9]{NS18}, the snake lemma shows that we have a surjection $\Ker(C'\to C'') \twoheadrightarrow C\otimes_{R_N^{s.\flat}}R_\infty^\flat$. Hence we have the implication
\[
(p^\flat)^c\cdot\Ker(C'\to C'')=0 \Longrightarrow p^{\frac{c}{p^N}}\cdot(C\otimes_{R_N^{s.\flat}}R_\infty^\flat) = 0.
\]
The former holds because $(p^\flat)^c\cdot C'=0$.

(ii) Take $m\geq N$, and let $y_m\coloneqq x_m/p^{\frac{c}{p^m}} \in \Frac(S_m)$. Since $x_N\in p^{\frac{c}{p^N}}S_N$, we have
\[
y_m^{p^m} = \frac{x_m^{p^m}}{p^{c}} \in \frac{x_N^{p^N}}{p^{c}} + p^{(N+1)\varepsilon -c}S_N \subset S_N \subset S_m.
\]
Since $S_m$ is normal, we get $y_m\in S_m$. Hence $x/(p^\flat)^{c} = (y_m\ \mathrm{mod}\ p^\varepsilon S_m)_{m\geq N} \in S_N^{s.\flat}$, and so $x \in (p^\flat)^{c}S_N^{s.\flat}$.
\end{pfclaim}

This completes the proof of \cref{thm:Sn perfectoid}.
\end{proof}

%%%%%%%%%%%%%%%%%%%%%%%%%%%%%%%%%%%%%%%%%%%%%%%%%%%%%%%
%%%%%%%%%%%%%%%%%%%%%%%%%%%%%%%%%%%%%%%%%%%%%%%%%%%%%%%
\appendix
\def\thesection{\Alph{section}}
\section{Flatness of completions}
\label{s:A}

It is well-known that if $R$ is a Noetherian ring with an ideal $I\subset R$, then the $I$-adically completion $R\to\wh{R}$ is flat. In this appendix, we discuss such a flatness for more general rings (especially, the $p$-adically completion $R_\infty\to\wh{R_\infty}$ of the colimit of the perfectoid tower associated to a complete regular local ring). The main idea is based on \cite[A.7. Lemma]{An06}.
In order to develop the theory in the most general setting, we consider the following condition for a pair $(A,I)$, introduced in \cite[Chapter 0, \S7.4.(c)]{FKI}.

  \begin{enumerate}
  \item[\textbf{(APf)}] For any finitely generated $A$-module $M$ and any finitely generated $A$-submodule $N\subseteq M$, the filtration $\{N\cap I^nM\}_{n\geq 0}$ on $N$ induced by the $I$-adic filtration of $M$ is $I$-adic.
  \end{enumerate}

Due to the Artin--Rees lemma, if $A$ is Noetherian, then it satisfies \textbf{(APf)} for any ideal $I\subset A$. Furthermore, if $V$ is an $a$-adically separated valuation ring for an element $a\in\mathfrak{m}_V\setminus\{0\}$, then the pair $(V,(a))$ satisfies \textbf{(APf)} (see \cite[Chapter 0, Example 8.5.14]{FKI}).

\begin{lemma}
\label{lem:flat comp}
Let $A$ be a ring and $I\subseteq A$ a finitely generated ideal. Suppose $A$ is $I$-adically complete and satisfies \emph{\textbf{(APf)}}.
Let $F$ be a flat (resp.\ faithfully flat) $A$-module. Let $(-)^\wedge$ denote the $I$-adically completion.
  \begin{enumerate}
  \item For any finitely presented $A$-module $M$, the canonical map
  \[
  M\otimes_A\wh{F} \to (M\otimes_AF)^\wedge
  \]
  is an isomorphism.
  \item $\wh{F}$ is a flat (resp.\ faithfully flat) $A$-module.
  \end{enumerate}
\end{lemma}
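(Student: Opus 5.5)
We first prove (1) for finite free modules and then pass to finitely presented ones. If $M=A^r$, then $M\otimes_A\wh{F}\cong \wh{F}^{\,r}$ and $(M\otimes_AF)^\wedge\cong (F^r)^\wedge\cong \wh{F}^{\,r}$, because completion commutes with finite direct sums; hence the canonical map is an isomorphism. For a general finitely presented $M$, we choose a presentation $A^s\xr{u} A^r\to M\to 0$. Tensoring with $F$ yields an exact sequence $F^s\to F^r\to M\otimes_AF\to 0$. By flatness of $F$, the kernel of $F^s\to F^r$ is $K\otimes_AF$, where $K=\Ker(A^s\to A^r)$; the image is $N\otimes_AF\subseteq F^r$, where $N=\Im(u)\subseteq A^r$.

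Right exactness of $-\otimes_A\wh{F}$ gives $M\otimes_A\wh{F}=\Cok(\wh{F}^{\,s}\to \wh{F}^{\,r})$. It therefore suffices to show
\[
(M\otimes_AF)^\wedge\cong \Cok(\wh{F}^{\,s}\to\wh{F}^{\,r}),
\]
that is, that completion is exact on $0\to N\otimes_AF\to F^r\to M\otimes_AF\to 0$ and that $(N\otimes_AF)^\wedge$ is the image of $\wh{F}^{\,s}$. Here we use \textbf{(APf)}. The filtration $N\cap I^nA^r$ is $I$-adic on $N$, so there is $c$ with $N\cap I^{n+c}A^r\subseteq I^nN$. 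Flatness of $F$ gives $(N\otimes F)\cap I^{n}F^r=(N\cap I^nA^r)\otimes F$. To see this, note that $I^nF^r=I^nA^r\otimes F$ since $F$ is flat, and intersections of submodules commute with flat base change. Consequently the subspace topology on $N\otimes_AF\subseteq F^r$ is the $I$-adic one. Since $I$ is finitely generated, the standard Mittag-Leffler argument on the inverse system $0\to (N\otimes F)/((N\otimes F)\cap I^nF^r)\to F^r/I^nF^r\to (M\otimes F)/I^n(M\otimes F)\to0$ then shows that completion preserves this short exact sequence. The same argument applied to $0\to K\to A^s\to N\to 0$, where $K$ is finitely generated once $N\subseteq A^r$ is finitely presented, gives surjectivity of $\wh{F}^{\,s}\to (N\otimes F)^\wedge$. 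A cleaner route is to note that $(F^s)^\wedge\to (N\otimes F)^\wedge$ is surjective because $F^s\to N\otimes F$ is surjective and $I$ is finitely generated. This completes (1). The main obstacle is exactly this comparison of topologies, and it is where both the finite generation of $I$ and \textbf{(APf)} are used.

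For (2), we use the ideal-theoretic criterion for flatness: it suffices to show that $J\otimes_A\wh{F}\to\wh{F}$ is injective for every finitely generated ideal $J$, and more generally that $N\otimes_A\wh{F}\to A^r\otimes_A\wh{F}$ is injective for finitely generated submodules $N\subseteq A^r$. We reduce to finitely presented modules by writing any module as a filtered colimit of finitely presented ones; Tor commutes with filtered colimits. For a finitely presented $M$, take a short exact sequence $0\to N\to A^r\to M\to 0$ in which $N$ is finitely generated. Apply (1) to $M$ and the completion-exactness established above: the map $N\otimes_A\wh{F}\to \wh{F}^{\,r}$ identifies with $(N\otimes F)^\wedge\to (F^r)^\wedge$, the latter being injective with cokernel $(M\otimes F)^\wedge=M\otimes\wh{F}$. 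The identification of $N\otimes\wh{F}$ with $(N\otimes F)^\wedge$ needs $N$ finitely presented. To bypass this, we instead check $\Tor_1^A(M,\wh{F})=0$ for $M=A/J$ via the sequence above, using that $\Tor_1(M,\wh F)=\Ker(N\otimes\wh F\to \wh F^{\,r})$ and that $N\otimes\wh F\to (N\otimes F)^\wedge$ is surjective. Any kernel element of $N\otimes \wh{F}$ is then handled by writing $N$ as a quotient of a finitely presented module and applying (1) once more.

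For faithful flatness, it suffices to show $\mathfrak m\wh{F}\neq\wh{F}$ for every maximal ideal $\mathfrak m$. Since $A$ is $I$-adically complete, $I\subseteq\rad(A)\subseteq\mathfrak m$. Hence $\wh{F}/\mathfrak m\wh{F}\cong A/\mathfrak m\otimes_A\wh{F}$, and by (1) this is isomorphic to $(F/\mathfrak m F)^\wedge=F/\mathfrak mF$, because $I$ kills $A/\mathfrak m$ and thus this module is already complete. Faithful flatness of $F$ gives $F/\mathfrak mF\neq0$, which finishes (2).
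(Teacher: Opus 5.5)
\textbf{Part (1).} Your proof of (1) is correct and follows the paper's argument. The paper takes the same finite presentation and quotes Fujiwara--Kato for the exactness of the completed row. You verify that exactness by hand: flatness turns $N\cap I^{n+c}A^r\subseteq I^nN$ into the statement that $N\otimes_AF\subseteq F^r$ carries the $I$-adic subspace topology, and Mittag-Leffler does the rest. You can drop the aside about $K$ being finitely generated ``once $N$ is finitely presented'' ($N$ need not be). Your ``cleaner route'' via surjectivity of completion is the right one.

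\textbf{The gap: flatness in (2).} Your (APf) argument shows that $(N\otimes_AF)^\wedge\to\wh{F}^{\,r}$ is injective. Hence $\Tor_1^A(M,\wh F)$ is \emph{exactly} $\Ker\big(N\otimes_A\wh F\to(N\otimes_AF)^\wedge\big)$. So injectivity of this map for every finitely generated $N\subseteq A^r$ (e.g.\ every finitely generated ideal $J$) is the whole content of flatness, and (1) gives it only when $N$ is finitely presented. Surjectivity of that map is irrelevant.

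Your proposed bypass is circular. Write $N=P/L$ with $P$ finitely presented and use $P\otimes_A\wh F\cong(P\otimes_AF)^\wedge$. Then injectivity becomes the assertion that the image of $L\otimes_A\wh F$ is all of
\[
\Ker\big((P\otimes_AF)^\wedge\to(N\otimes_AF)^\wedge\big)=\varprojlim_n\frac{L\otimes_AF+I^n(P\otimes_AF)}{I^n(P\otimes_AF)}.
\]
In words: $I$-adic limits of elements of $L\otimes_AF$, which may involve infinitely many generators of $L$, must lie in the image of $L'\otimes_A\wh F$ for a single finitely generated $L'\subseteq L$. Since $L$ need not be finitely generated, neither (APf) nor (1) says anything about this. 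It is the same problem one syzygy lower.

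As written, your argument proves flatness only when $A$ is coherent. In that case $N$ is finitely presented, and your first identification $N\otimes_A\wh F\cong(N\otimes_AF)^\wedge\hookrightarrow\wh F^{\,r}$ already finishes the proof.

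For comparison, the paper reduces (2) to exactness of $-\otimes_A\wh F$ on finitely presented modules, then uses (1) together with flatness of $F$ and (APf). That one-line reduction passes over the same point, since a first syzygy of a finitely presented module need not be finitely presented. This is harmless in the paper's applications, where $A$ is Noetherian.

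\textbf{A smaller slip in faithful flatness.} You apply (1) to $A/\mathfrak m$, which is not finitely presented unless $\mathfrak m$ is finitely generated. Argue directly instead. Since $I$ is finitely generated, $\wh F/I\wh F\cong F/IF$. Since $I\subseteq\mathfrak m$, this gives $\wh F/\mathfrak m\wh F\cong F/\mathfrak mF\neq 0$.
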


\begin{proof}
(1) Take a finite presentation
\[
A^{\oplus m} \to A^{\oplus n} \to M\to 0,
\]
and consider the commutative diagram with exact rows
\[
\begin{tikzcd}
A^{\oplus m}\otimes_A\wh{F} \rar \dar & A^{\oplus n}\otimes_A\wh{F} \rar \dar & M\otimes_A\wh{F} \rar \dar & 0 \\
(A^{\oplus m}\otimes_AF)^\wedge \rar & (A^{\oplus n}\otimes_AF)^\wedge \rar & (M\otimes_AF)^\wedge \rar & 0.
\end{tikzcd}
\]
Here the exactness of the second row is a consequence of \textbf{(APf)} (see \cite[Chapter 0, Proposition 7.4.10]{FKI}). Since the first two vertical arrows are clearly isomorphisms, the third is also an isomorphism.

(2) Since any $A$-module is isomorphic to an inductive limit of finitely presented $A$-modules, it suffices to show that the functor $-\otimes_A\wh{F}$ on the category of finitely presented $A$-modules is exact (resp.\ faithful exact). In view of (2), this amounts to showing that the functor $(-\otimes_AF)^\wedge$ is exact (resp.\ faithful exact), which follows from the assumption of $F$ and \textbf{(APf)}.
\end{proof}

\begin{lemma}
\label{lem:flat injlim}
Let $\{f_i\}_{i\in I}\colon \{A_i\}_{i\in I}\to \{B_i\}_{i\in I}$ be a morphism of inductive systems of rings indexed by a directed set $I$. If each $f_i$ is flat (resp.\ faithfully flat), then so is $\varinjlim\limits_{i\in I}f_i\colon \varinjlim\limits_{i\in I}A_i\to \varinjlim\limits_{i\in I}B_i$.
\end{lemma}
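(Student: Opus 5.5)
We reduce both assertions to the corresponding statements for the individual maps $f_i$, using that tensor products commute with filtered colimits. Put $A\coloneqq\varinjlim_{i}A_i$ and $B\coloneqq\varinjlim_{i}B_i$, with canonical maps $\alpha_i\colon A_i\to A$ and $\beta_i\colon B_i\to B$.

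\emph{Flatness.} It suffices to show that $\Tor_1^A(A/J,B)=0$ for every finitely generated ideal $J\subseteq A$, or equivalently that $J\otimes_AB\to B$ is injective. Let $\sum_k a_k\otimes b_k\in J\otimes_AB$ be an element mapping to $0$ in $B$. Choose an index $i$ such that:
\begin{itemize}
\item the generators of $J$ come from elements of $A_i$; let $J_i\subseteq A_i$ be the ideal they generate;
\item each $a_k$ comes from $J_i$ and each $b_k$ comes from $B_i$.
\end{itemize}
Then $J=\varinjlim_{j\geq i}J_jA$, and we have an isomorphism
\[
J\otimes_AB\cong\varinjlim_{j\geq i}\bigl(J_j\otimes_{A_j}B_j\bigr)
\]
up to identifying $J$ with $\varinjlim_j J_j$. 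The cleanest way to justify this is the following standard fact, which I would cite or prove directly: if $M_i$ is an $A_i$-module and $N_i$ is a $B_i$-module, compatibly in $i$, then
\[
\varinjlim_i\bigl(M_i\otimes_{A_i}N_i\bigr)\cong\bigl(\varinjlim_i M_i\bigr)\otimes_A\bigl(\varinjlim_i N_i\bigr).
\]
With this fact, flatness follows from the identification
\[
\Tor_1^A\Bigl(\varinjlim_i M_i,\ B\Bigr)\cong\varinjlim_i\Tor_1^{A_i}(M_i,B_i)
\]
for compatible $A_i$-modules $M_i$. To prove it, take compatible free resolutions, which is possible because filtered colimits are exact and preserve free modules after base change. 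Then apply $M=\varinjlim_i(A_i/J_i)$, which equals $A/J$. Each term $\Tor_1^{A_i}(A_i/J_i,B_i)$ vanishes because $f_i$ is flat.

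\emph{Faithful flatness.} Given flatness, it suffices to show that $\mathfrak{m}B\neq B$ for every maximal ideal $\mathfrak{m}$ of $A$. Suppose instead that $1=\sum_k m_kb_k$ with $m_k\in\mathfrak{m}$. This relation descends to some stage $i$: we get $1=\sum_k m_{k,i}b_{k,i}$ in $B_i$, with $m_{k,i}\in\mathfrak{p}_i\coloneqq\alpha_i^{-1}(\mathfrak{m})$, after possibly enlarging $i$. The ideal $\mathfrak{p}_i$ is a prime of $A_i$, and faithful flatness of $f_i$ gives $\mathfrak{p}_iB_i\neq B_i$, a contradiction.

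The only real work is the Tor/colimit compatibility. I expect that step to be the main obstacle, though it is standard.
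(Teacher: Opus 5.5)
Your proof is correct, but it takes a different route from the paper.

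The paper reduces the faithfully flat case to the flat case. It uses, implicitly, that a flat ring map is faithfully flat if and only if it is injective with flat cokernel (purity). Since filtered colimits are exact, $\varinjlim f_i$ is injective and $\Cok(\varinjlim f_i)=\varinjlim\Cok(f_i)$ is a colimit of flat $A_i$-modules. The flat case itself is cited from Bourbaki, in its module form: a colimit of flat $A_i$-modules is flat over $\varinjlim A_i$.

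You prove the flat case directly via $\Tor_1^A(\varinjlim M_i,B)\cong\varinjlim\Tor_1^{A_i}(M_i,B_i)$. You then treat faithful flatness separately through the maximal-ideal criterion: a relation $1=\sum_k m_kb_k$ descends to a finite stage, where it contradicts $\mathfrak p_iB_i\neq B_i$. This argument is elementary and self-contained, needing neither the purity characterization nor the module version of the flat case. The paper's proof is shorter because it recycles the flat case for both halves.

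Three places could be tightened.
\begin{enumerate}
\item The phrase ``compatible free resolutions'' should be made precise. For example, take the canonical resolutions $A_i^{(M_i)}\leftarrow A_i^{(K_i)}\leftarrow\cdots$ with $K_i=\Ker(A_i^{(M_i)}\to M_i)$; their colimit is the canonical free resolution of $\varinjlim M_i$ over $A$, so the colimit is literally a free resolution.
\item The $\Tor$ machinery is not needed at all, because the element chase you started and then abandoned finishes at once. An element of $J\otimes_AB$ killed in $B$ lifts to some $J_j\otimes_{A_j}B_j$. Its image in $B_j$ vanishes in some $B_{j'}$. Since $J_{j'}\otimes_{A_{j'}}B_{j'}\to B_{j'}$ is injective by flatness of $f_{j'}$, the element is already zero in $J_{j'}\otimes_{A_{j'}}B_{j'}$, hence in $J\otimes_AB$.
\item The line $J=\varinjlim_{j\geq i}J_jA$ should read $J\cong\varinjlim_{j\geq i}J_j$.
\end{enumerate}
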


\begin{proof}
Since $\Cok(\varinjlim\limits_{i\in I}f_i)=\varinjlim\limits_{i\in I}\Cok(f_i)$, it is enough to prove the assertion only in the ``flat'' case, which is easy; see \cite[Chapter I, \S2.7, Proposition 9]{BouAC}.
\end{proof}

\begin{lemma}[{cf.\ \cite[A.7. Lemma]{An06}}]
\label{lem:A.7}
Let $R$ be a ring and $I\subseteq R$ a finitely generated ideal.
Let $\{R_i,t_i\}_{i\geq 0}$ be a tower of rings with $R_0=R$.
Suppose that for every $i\geq 0$, $R_i$ is $I$-adically complete and satisfies \emph{\textbf{(APf)}}.
If $R_i\to R_\infty$ is flat (resp.\ faithfully flat) for every $i\geq 0$, then so is $R_\infty\to\wh{R_\infty}$.
\end{lemma}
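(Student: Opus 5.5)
We must show: if each $R_i\to R_\infty$ is flat (resp.\ faithfully flat), then $R_\infty\to\wh{R_\infty}$ is flat (resp.\ faithfully flat), where $\wh{(-)}$ is the $I$-adic completion. The plan is to write $R_\infty\to\wh{R_\infty}$ as a filtered colimit of flat maps and apply \cref{lem:flat injlim}. For each $i$, regard $R_\infty$ as an $R_i$-module $F_i$. It is flat (resp.\ faithfully flat) by hypothesis. Since $R_i$ is $I$-adically complete and satisfies \textbf{(APf)}, \cref{lem:flat comp}(2) shows that $\wh{F_i}=\wh{R_\infty}$ is flat (resp.\ faithfully flat) over $R_i$. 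Note that the $IR_i$-adic and $I$-adic topologies on $R_\infty$ agree, so $\wh{F_i}$ is literally $\wh{R_\infty}$.

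Next I would consider the morphism of inductive systems $\{R_i\to\wh{R_\infty}\}_{i\geq 0}$, with transition maps $t_i$ on the source and the identity on the target $\wh{R_\infty}$. Each map $R_i\to\wh{R_\infty}$ is flat (resp.\ faithfully flat) by the first step. \cref{lem:flat injlim} then gives that $\varinjlim R_i=R_\infty\to\varinjlim\wh{R_\infty}=\wh{R_\infty}$ is flat (resp.\ faithfully flat). The colimit of the constant system is $\wh{R_\infty}$ itself, and the induced map is the canonical completion map because it is compatible with every $R_i\to R_\infty\to\wh{R_\infty}$.

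The only step needing care is confirming that \cref{lem:flat injlim} applies with a constant target system, and that in the faithfully flat case the cokernel argument in its proof goes through. Concretely, $\Cok(R_\infty\to\wh{R_\infty})=\varinjlim\Cok(R_i\to\wh{R_\infty})$ by exactness of filtered colimits. As a backup for faithful flatness, I would check directly that $\mathfrak m\wh{R_\infty}\neq\wh{R_\infty}$ for every maximal ideal $\mathfrak m$ of $R_\infty$. Each contraction $\mathfrak m\cap R_i$ is a proper ideal with $(\mathfrak m\cap R_i)\wh{R_\infty}$ proper. If $1\in\mathfrak m\wh{R_\infty}$, then $1$ would lie in $(\mathfrak m\cap R_i)\wh{R_\infty}$ for some $i$, since the relevant finitely many elements of $\mathfrak m$ come from a single $R_i$. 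That is a contradiction. I expect no genuine obstacle beyond this bookkeeping; the substance is already contained in \cref{lem:flat comp}.
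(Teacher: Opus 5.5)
Your proposal is correct and is exactly the paper's argument: \cref{lem:flat comp}~(2), applied to the flat (resp.\ faithfully flat) $R_i$-module $R_\infty$, shows that each $R_i\to\wh{R_\infty}$ is flat (resp.\ faithfully flat), and \cref{lem:flat injlim}, applied to the morphism from $\{R_i\}_{i\geq 0}$ to the constant system $\wh{R_\infty}$, gives the claim. Your additional maximal-ideal check for faithful flatness is also valid, but it is not needed, since a ring map $A\to B$ is faithfully flat iff $B$ and $B/A$ are flat $A$-modules, and this cokernel criterion passes to filtered colimits.
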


\begin{proof}
By \cref{lem:flat comp} (2), $R_i\to \wh{R_\infty}$ is flat (resp.\ faithfully flat) for every $i\geq 0$. But then so is $R_\infty\to\wh{R_\infty}$ by \cref{lem:flat injlim}.
\end{proof}

\begin{remark}
If $R_\infty$ is $I$-adically Zariskian, then $R_\infty\to\wh{R_\infty}$ is flat if and only if it is faithfully flat (\cite[Chapter 0, Proposition 7.3.8]{FKI}).
\end{remark}

Finally, let us include the following lemma concerning the case where tensor products commute with projective limits.

\begin{lemma}[{cf.\ \cite[\href{https://math.stackexchange.com/questions/181004/inverse-limit-of-modules-and-tensor-product}{q181004}]{MSE}}]
\label{lem:tensorLim}
Let $A$ be a ring. Let $M$ be a finitely presented $A$-module, and $\{N_i\}_{i\in I}$ a strict\footnote{A projective system $\{X_i,f_{ij}\colon X_j\to X_i\}_{i\in I}$ of objects in a category indexed by a directed set $I$ is said to be \emph{strict} if all transition maps $f_{ij}$ for $i\leq j$ are epimorphic.} projective system of $A$-modules indexed by a directed set $I$. If $\Tor^A_1(M,N_i)=0$ for any $i\in I$ (e.g., $M$ or every $N_i$ is flat), then the canonical map
\[
M\otimes_A (\varprojlim_{i\in I}N_i) \to \varprojlim_{i\in I}(M\otimes_AN_i)
\]
is an isomorphism.
\end{lemma}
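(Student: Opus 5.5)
We use the standard Mittag-Leffler argument for tensoring a finitely presented module with an inverse limit. Choose a finite presentation
\[
A^{\oplus m}\xr{\alpha} A^{\oplus n}\to M\to 0 .
\]
Write $K\coloneqq\Im\alpha\subseteq A^{\oplus n}$. Then $K$ is finitely generated and there is a short exact sequence $0\to K\to A^{\oplus n}\to M\to 0$. Set $N\coloneqq\varprojlim_i N_i$.

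First we show that the functor $N'\mapsto K\otimes_AN'$ carries the system $\{N_i\}$ to a system with limit $K\otimes_AN$. Apply $-\otimes_AN_i$ to the sequence. Since $\Tor^A_1(M,N_i)=0$, we get short exact sequences
\[
0\to K\otimes_AN_i\to N_i^{\oplus n}\to M\otimes_AN_i\to 0,
\]
compatible in $i$. Taking $\varprojlim$ is left exact, so we obtain an exact sequence
\[
0\to\varprojlim_i(K\otimes_AN_i)\to N^{\oplus n}\to\varprojlim_i(M\otimes_AN_i).
\]
The top row is $K\otimes_AN\to N^{\oplus n}\to M\otimes_AN\to 0$, which is exact on the right. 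It maps to the limit row, and the middle vertical map is the identity.

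Next we compare the images. We need the image of $K\otimes_AN\to N^{\oplus n}$ to equal $\varprojlim_i(K\otimes_AN_i)$, viewed inside $N^{\oplus n}$. Because $K$ is finitely generated, $A^{\oplus m}\twoheadrightarrow K$ is surjective, so $N_i^{\oplus m}\twoheadrightarrow K\otimes_AN_i$ is surjective for each $i$. Hence the image of $K\otimes N$ in $N^{\oplus n}$ equals $\alpha(N^{\oplus m})$, while $\varprojlim(K\otimes N_i)$ equals $\varprojlim_i \alpha(N_i^{\oplus m})$.

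The key point, and the main obstacle, is that $\varprojlim$ of these surjections stays surjective. Consider the kernels $L_i\coloneqq\Ker(N_i^{\oplus m}\to K\otimes N_i)$. Surjectivity of the limit map follows from $\varprojlim^1 L_i=0$, which would hold if $\{L_i\}$ were Mittag-Leffler. That is not automatic, so we argue directly instead. Let $(k_i)\in\varprojlim(K\otimes N_i)$. Since the index set is only directed, I would first treat the case where $I$ is countable, or has a cofinal countable subset. Choose lifts $x_i\in N_i^{\oplus m}$ inductively using strictness: the transition map $N_{j}\to N_i$ is surjective, and the fiber over a given element is a coset of a kernel, so lifts can be adjusted compatibly. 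Concretely, given a lift $x_i$ of $k_i$, pick any lift $x'_{j}$ of $k_j$. Its image in $N_i^{\oplus m}$ differs from $x_i$ by an element of $L_i$. To correct $x'_j$, we must lift that element of $L_i$ to $L_j$, and this needs $L_j\to L_i$ to be surjective. This is exactly where the hypothesis matters: by the snake lemma applied to $0\to L\to N^{\oplus m}\to K\otimes N\to0$ at levels $j\to i$, surjectivity on $L$ is controlled by $\Tor_1(K,-)$. Here $\Tor_1^A(K,N_i)\cong\Tor_2^A(M,N_i)$, which is not assumed zero, so I expect this step to need care. A possible workaround is to replace the presentation by one where the kernel term is also tensored harmlessly. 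Alternatively one can accept surjectivity only up to the image, since only the equality of images inside $N^{\oplus n}$ is needed, and show that the map $N^{\oplus m}\to\varprojlim(K\otimes N_i)$ is onto via the surjections $N^{\oplus m}\to N_i^{\oplus m}$ coming from strictness in the countable case.

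Once we have $\Im(K\otimes N)=\varprojlim(K\otimes N_i)$ inside $N^{\oplus n}$, a diagram chase in the two rows gives the result. The canonical map $M\otimes N\to\varprojlim(M\otimes N_i)$ is injective. It is also surjective, because strictness makes $N^{\oplus n}\to N_i^{\oplus n}$ surjective in the countable case, and a compatible family in $\varprojlim(M\otimes N_i)$ lifts to $\varprojlim N_i^{\oplus n}=N^{\oplus n}$. That last lifting is again a $\varprojlim^1$ vanishing, now for the system $\{K\otimes N_i\}$, which has surjective transitions since the $N_i$ do. So that part is fine, and the real obstacle is the image comparison above. In the paper's application the system is countable and the $N_i$ are modules over $R$, so the countable case suffices.
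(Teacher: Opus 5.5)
Your skeleton is the same as the paper's: a finite presentation, a comparison of the tensored row with the row of limits, and a reduction to one image comparison plus vanishing of $\varprojlim^1$. Restricting to a countable (or countably cofinal) index set is harmless. The application is over $\mathbb{N}$, and the paper's own deduction of exactness of the limit row from strictness tacitly uses the same fact: strict systems have vanishing $\varprojlim^1$, which is only guaranteed for countably cofinal index sets.

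\textbf{The gap.} The genuine gap is the step you flag as the main obstacle: surjectivity of $\varprojlim_i N_i^{\oplus m}\to\varprojlim_i(K\otimes_AN_i)$. You leave it unproved and misdiagnose it. Surjectivity of the transition maps $L_j\to L_i$, where $L_i=\Ker(N_i^{\oplus m}\to K\otimes_AN_i)$, has nothing to do with $\Tor_1^A(K,N_i)\cong\Tor_2^A(M,N_i)$.
\begin{itemize}
\item Tensor $0\to\Ker\alpha\to A^{\oplus m}\to K\to 0$ with $N_i$ and use only right exactness. This shows $L_i$ is the image of $(\Ker\alpha)\otimes_AN_i\to N_i^{\oplus m}$.
\item The map $(\Ker\alpha)\otimes_AN_j\to(\Ker\alpha)\otimes_AN_i$ is surjective, since tensoring preserves surjections; no finiteness of $\Ker\alpha$ is needed.
\item Naturality then gives $L_j\twoheadrightarrow L_i$.
\end{itemize}
In your snake-lemma language: $\Ker(K\otimes_AN_j\to K\otimes_AN_i)$ is the image of $K\otimes_A\Ker(N_j\to N_i)$, which is hit by $\Ker(N_j\to N_i)^{\oplus m}$, so the connecting map vanishes. $\Tor$ only obstructs injectivity of $K\otimes_A\Ker(N_j\to N_i)\to K\otimes_AN_j$, which is irrelevant here. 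Hence $\{L_i\}$ is strict, $\varprojlim^1_iL_i=0$ in the countable case, and the image comparison follows. This is exactly the content of the paper's remark that $\{(\Ker\varphi)\otimes_AN_i\}_i$ is again strict.

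\textbf{The workaround fails.} Surjectivity of each $N^{\oplus m}\to N_i^{\oplus m}$ says nothing about surjectivity of $N^{\oplus m}\to\varprojlim_i(K\otimes_AN_i)$. Hitting an element of the limit requires a \emph{compatible} family of lifts, which is precisely the vanishing of $\varprojlim^1_iL_i$ you were trying to avoid.

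\textbf{A smaller point.} Your opening goal, that $K\otimes_AN$ itself is the limit of $\{K\otimes_AN_i\}$, is more than you need and is never established. Its injectivity amounts to $\Tor_1^A(M,N)=0$. The right target is the equality of images inside $N^{\oplus n}$, which you reach later.

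Once the strictness of $\{L_i\}$ is supplied, the rest of your diagram chase is correct: injectivity follows from the image equality, and surjectivity from $\varprojlim^1$-vanishing for the strict system $\{K\otimes_AN_i\}$.
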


\begin{proof}
Take a finite presentation
\[
A^{\oplus m}\xr{\varphi} A^{\oplus n}\xr{\pi} M\to 0,
\]
and consider the commutative diagram with exact rows
\[
\begin{tikzcd}
A^{\oplus m}\otimes_A(\varprojlim\limits_{i\in I}N_i) \rar \dar & A^{\oplus n}\otimes_A(\varprojlim\limits_{i\in I}N_i) \rar \dar & M\otimes_A(\varprojlim\limits_{i\in I}N_i) \rar \dar & 0 \\
\varprojlim\limits_{i\in I}(A^{\oplus m}\otimes_AN_i) \rar & \varprojlim\limits_{I\in I}(A^{\oplus n}\otimes_AN_i) \rar & \varprojlim\limits_{i\in I}(M\otimes_AN_i) \rar & 0.
\end{tikzcd}
\]
Here the exactness of the second row follows from the vanishing $\Tor_1^A(M,N_i)=0$ and the fact that the projective systems $\{(\Ker\pi)\otimes_AN_i\}_{i\in I}$ and $\{(\Ker\varphi)\otimes_AN_i\}_{i\in I}$ are again strict. Since the first two vertical maps are clearly isomorphisms, the third is also an isomorphism.
\end{proof}

%%%%%%%%%%%%%%%%%%%%%%%%%%%%%%%%%%%%%%%%%%%%%%%%%%%%%%%%%%%%%%%%%%%%%%%
%%%%%%%%%%%%%%%%%%%%%%%%%%%%%%%%%%%%%%%%%%%%%%%%%%%%%%%%%%%%%%%%%%%%%%%
%%%%%%%%%%%%%%%%%%%%%%%%%%%%%%%%%%%%%%%%%%%%%%%%%%%%%%%%%%%%%%%%%%%%%%%

\end{document}